\newtheorem{theorem}{Theorem}[section]
\newtheorem{lemma}[theorem]{Lemma}
\newtheorem{proposition}[theorem]{Proposition}
\newtheorem{corollary}[theorem]{Corollary}
\newtheorem{conjecture}[theorem]{Conjecture}
\newtheorem{formula}[theorem]{Formula}
\theoremstyle{definition}
\newtheorem{definition}[theorem]{Definition}
\newtheorem{example}[theorem]{Example}
\newtheorem{question}[theorem]{Question}
\newtheorem{remark}[theorem]{Remark}
 \numberwithin{equation}{section}
\newcommand{\Rational}{{\mathbb Q}}
\newcommand{\Integral}{{\mathbb Z}}
\newcommand{\Hyp}{{\mathbf{H}^3}}
\newcommand{\id}{{\mathrm{id}}}
\newcommand{\pfib}{{\Phi}}
\newcommand{\periodic}{{\mathtt{per}}}
\newcommand{\pA}{{\mathtt{pA}}}
\newcommand{\gin}{{\mathbf{i}}} %geometric intersection number
\title{A characterization of virtually embedded
subsurfaces in $3$-manifolds}
\author[Yi Liu]{%
        Yi Liu} 
\address{%
    Mathematics 253-27\\
    California Institute of Technology\\
    Pasadena, CA 91125} 
\email{% 
    yliumath@caltech.edu}  
\thanks{Partially supported by NSF grant DMS-1308836}
\subjclass[2010]{Primary 57M05}
\keywords{spirality character, virtually embedded subsurface, suspension flow}
\date{% 
 \today}
\begin{document}

\begin{abstract}
	The paper introduces the spirality character of the almost fiber part for
	a closed essentially immersed subsurface of a closed orientable aspherical
	3-manifold, which generalizes an invariant due to Rubinstein and Wang.
	The subsurface is virtually embedded 
	if and only if the almost fiber part is aspiral, and in this case,
	the subsurface is virtually a leaf of a taut foliation. 
	Besides other consequences,
	examples are exhibited that non-geometric $3$-manifolds with no Seifert fibered pieces
	may contain essentially immersed but not virtually embedded closed subsurfaces.
\end{abstract}

\maketitle

\section{Introduction}
	
	In this paper, we study virtually essentially embedded closed subsurfaces of 
	closed orientable $3$-manifolds. We always assume a closed
	surface or $3$-manifold to be connected, but a bounded surface or $3$-manifold
	may be disconnected.	
	A closed locally flatly immersed subsurface $S$ of a $3$-manifold $M$	
	is said to be \emph{essentially immersed},
	if $S$ is not a sphere and no homotopically nontrivial loop
	in $S$ is contractible in $M$. 
	A (homotopy) \emph{virtual property} of $S$ is a property which 
	holds for some lift of $S$ in some finite cover of $M$ up to homotopy.
	
	While all essentially immersed subsurfaces  of geometric $3$-manifolds
	are known to be virtually embedded, which is a consequence of Agol's proof 
	of the Virtual Haken Conjecture in the hyperbolic case \cite{Agol-VHC}
	and a simple fact in the non-hyperbolic case, cf.~\cite{NW}, 
	many essentially immersed subsurfaces  of 
	non-geometric $3$-manifolds do not to have this property.
	The first such example was discovered 
	by Rubinstein and Wang \cite{RW}. 
	In \cite{PW-embedded},  Przytycki and Wise have shown that
	a closed essentially immersed subsurface 
	of a $3$-manifold is virtually embedded if and only if it is $\pi_1$-separable.
	
	From a  topological point of view,
	we will characterize virtually essentially embedded 
	subsurfaces of $3$-manifolds in terms of the spirality 
	of the almost fiber part.
	Canonically induced from the JSJ decomposition,
	a closed essentially immersed subsurface $S$ 
	of a closed orientable aspherical $3$-manifold $M$
	can be decomposed into the \emph{JSJ subsurfaces} along \emph{JSJ curves}. 
	The \emph{almost fiber part} $\pfib(S)$ of the surface $S$
	is thus defined to be the union of all 
	the horizontal or geometrically infinite JSJ subsurfaces,
	carried by Seifert fibered or hyperbolic pieces of $M$ respectively,
	glued up along shared JSJ curves.
	Denote by $\Rational^\times$ the multiplicative abelian group of nonzero rational
	numbers. We will introduce 
	the \emph{spirality character} of the almost fiber part $\pfib(S)$,
		$$s:\,H_1(\pfib(S);\,\Integral)\to\Rational^\times,$$
	which is a natural invariant homomorphism.
	We say that $\pfib(S)$ is \emph{aspiral} if the spirality character only
	takes the values $\pm1$.
	When $M$ is a graph manifold and $S$ is orientable and horizontally
	immersed, the spirality character coincides with the invariant $s$ introduced 
	by Rubinstein--Wang \cite{RW}.
	
	In the case that $S$ is virtually essentially embedded, it is also interesting to 
	ask whether $S$ virtually fits into a taut foliation. 
	Recall that a codimension-$1$ foliation of a $3$-manifold is said to be \emph{taut}
	if for every leaf of the foliation there exists an immersed loop passing through the leaf
	and transverse to all leaves of the foliation. When $S$ is orientable,
	the question is equivalent to whether $S$ virtually minimizes the Thurston norm,
	cf.~\cite{Th-norm,Ga}.
		
	The main result of this paper is the following:
	
	\begin{theorem}\label{main-aspiralityCriterion}
		Let $M$ be an orientable closed aspherical $3$-manifold,
		and $S$ be a closed essentially immersed subsurface.
		Then following statements are equivalent:
		\begin{enumerate}
			\item $S$ is aspiral in the almost fiber part;
			\item $S$ is virtually essentially embedded;
			\item $S$ is virtually a leaf of a taut foliation.
		\end{enumerate}
	\end{theorem}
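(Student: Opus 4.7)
The plan is to establish the equivalences by proving the cycle $(3) \Rightarrow (2) \Rightarrow (1) \Rightarrow (3)$.

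The direction $(3) \Rightarrow (2)$ is essentially immediate. A compact leaf of a codimension-one foliation is a properly embedded submanifold, and tautness together with the essentiality of $S$ precludes nullhomotopic loops on the leaf (Novikov's theorem rules out Reeb components). Consequently the leaf realizing a finite cover of $S$ in a finite cover of $M$ is an essentially embedded lift.

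For $(2) \Rightarrow (1)$, the argument rests on naturality of the spirality character under finite covers. Let $\tilde S \subset \tilde M$ be an embedded lift. The JSJ decomposition of $\tilde M$ pulls back from that of $M$, and $\pfib(\tilde S)$ is a finite cover of $\pfib(S)$ in a manner compatible with the definition of the almost fiber part. The character on the cover is, up to suitable powers, the pullback of that on the base. The key geometric observation is that an embedded almost fiber part forces the character on $\pfib(\tilde S)$ to take values in $\{\pm 1\}$, because the character measures a rescaling factor associated with traversing loops across JSJ curves, and embeddedness in an orientable $3$-manifold prevents any nontrivial such rescaling. Naturality then transfers aspirality back to $\pfib(S)$.

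The direction $(1) \Rightarrow (3)$ is the main construction, which proceeds piece by piece along the JSJ decomposition after passing to a convenient finite cover. Using aspirality together with Agol's virtual fibering theorem and Canary's covering theorem, I would pass to a finite cover $\tilde M \to M$ so that Seifert pieces have orientable base, each hyperbolic piece carrying a geometrically infinite component of the lift $\tilde S$ virtually fibers over the circle with that component isotopic to a fiber, and the spirality character of $\pfib(\tilde S)$ is literally trivial (not merely $\pm 1$-valued). Then on each Seifert piece meeting a horizontal component of $\tilde S$, extend that component to the horizontal foliation coming from the suspension structure on the $S^1$-bundle; on each hyperbolic piece meeting a geometrically infinite component, use the virtual fibration to produce the fibered foliation; on the remaining pieces, fill in a compatible taut foliation tangent to $\tilde S$, for instance as a suspension of a surface automorphism that matches the prescribed boundary data. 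Finally glue these local foliations across JSJ tori into a global taut foliation of $\tilde M$ containing $\tilde S$ as a leaf.

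The hardest step will be the final gluing. Local foliations on adjacent JSJ pieces typically disagree around shared JSJ tori, and the discrepancy is governed exactly by the values of the spirality character on those tori. Arranging literal triviality of the character by passing to a further finite cover should kill the obstruction, but verifying that aspirality is the precise gluing obstruction, and that once trivialized the pieces fit together with matching holonomy, constitutes the technical heart of the argument; this is what the construction of the spirality character developed earlier in the paper must be designed to support.
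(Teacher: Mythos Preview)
Your cycle runs $(3)\Rightarrow(2)\Rightarrow(1)\Rightarrow(3)$, while the paper proves $(1)\Rightarrow(2)\Rightarrow(3)\Rightarrow(1)$. Your $(3)\Rightarrow(2)$ is fine, and your $(2)\Rightarrow(1)$ is plausible though underspecified --- you assert that embeddedness ``prevents any nontrivial rescaling'' without supplying the mechanism. The paper instead does $(3)\Rightarrow(1)$, where tautness gives $[S]\neq 0$, so one can pass to a cyclic cover dual to $S$ in which each JSJ curve lies on its own JSJ torus, and then Formula~\ref{spiralityH} yields aspirality by inspection.

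The real gap is your $(1)\Rightarrow(3)$. On JSJ pieces carrying virtual-fiber subsurfaces of $S$ your plan is sound, and aspirality is indeed what lets the local suspensions over $\pfib(S)$ fit together into a fibered semicover (this is Lemma~\ref{aspiralPhiS}). But on the remaining pieces --- those carrying geometrically finite or vertical JSJ subsurfaces, i.e.\ exactly the complement of $\pfib(S)$ --- your proposal to ``fill in a compatible taut foliation \ldots\ as a suspension of a surface automorphism'' cannot work: these subsurfaces are by definition \emph{not} virtual fibers, so no suspension contains them as a leaf, and producing a taut foliation of a hyperbolic piece with a prescribed geometrically finite leaf and prescribed boundary holonomy is a genuine problem that your outline does not address. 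The paper sidesteps piecewise construction of the foliation altogether. For $(1)\Rightarrow(2)$ it builds an auxiliary immersed complex $Y_m(S)$ (a partial suspension of $S$ over the augmented almost fiber part) and, using Wise's relatively quasiconvex separability on the hyperbolic pieces together with merging of semicovers, finds a finite cover of $M$ in which $Y_m(S)$, and hence $S$, embeds. For $(2)\Rightarrow(3)$ the construction is refined so that each non-virtual-fiber JSJ subsurface becomes a \emph{retract} of its carrier piece in a suitable cover (Lemma~\ref{localPartialSuspension}); this forces the lift of $S$ to realize the Thurston norm (Lemmas~\ref{globalTautness} and~\ref{localTautness}), and Gabai's theorem then delivers the global taut foliation as a black box. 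The hands-on gluing of foliations you envisage is thus replaced by separability theory plus Gabai.
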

	
	Theorem \ref{main-aspiralityCriterion}
	implies, at least in principle,  a solution to the decision problem for (closed) surface subgroup separability
	in closed $3$-manifold groups. For example, 
	take the input a finite presentation of 
	a closed $3$-manifold group and a finite generating set of 
	a surface subgroup:
		
	\begin{corollary}\label{decideSeparability}
		There exists an algorithm to decide whether or not 
		a surface subgroup in a closed $3$-manifold group is separable.
	\end{corollary}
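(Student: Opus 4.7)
The plan is to reduce the separability question to the aspirality criterion of Theorem \ref{main-aspiralityCriterion}. By the theorem of Przytycki--Wise cited above, a closed essentially immersed subsurface is virtually embedded if and only if the corresponding surface subgroup is separable; by Theorem \ref{main-aspiralityCriterion}, this holds if and only if the spirality character $s:H_1(\pfib(S);\Integral)\to\Rational^\times$ is $\{\pm1\}$-valued. Since $H_1(\pfib(S);\Integral)$ is finitely generated, it suffices to evaluate $s$ on a finite generating set. The algorithm thus has two halves: geometrically realize the inputs as a simplicial map $f:S\to M$ with known JSJ structure, and then compute $s$ from that combinatorial data.

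The geometric realization proceeds in four effective steps. First, reconstruct a triangulation of $M$ from its fundamental group presentation using Kuperberg's algorithmic consequence of geometrization for the homeomorphism problem on closed $3$-manifolds; if $M$ fails to be aspherical the classification of spherical and reducible closed $3$-manifolds handles separability directly. Second, compute the JSJ decomposition of $M$ and classify each piece as Seifert fibered or hyperbolic (Jaco--Tollefson). Third, identify the genus of the subgroup from its presentation via Nielsen transformations, then enumerate simplicial maps from surfaces of that genus into the triangulation until one is found whose induced map on $\pi_1$ is conjugate to the prescribed subgroup; homotope it into minimal position with respect to the JSJ tori to obtain the JSJ decomposition of $S$. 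Fourth, on each JSJ subsurface decide horizontality in the Seifert case (a finite rational test on boundary slopes against the regular fiber class) and decide geometric infiniteness in the hyperbolic case (algorithmic by the tameness theorem of Agol and Calegari--Gabai together with the algorithmic structure of finitely generated subgroups of hyperbolic $3$-manifold groups). This identifies $\pfib(S)$. Having $\pfib(S)$ in hand, the definition of $s$ given in the body of the paper is local along JSJ curves and fibered structures, so each value $s(\alpha)$ on a homology generator $\alpha$ is an explicit rational number computable from the combinatorial model of $f$; accept if and only if every such value lies in $\{\pm1\}$.

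The main obstacle is the geometric-finiteness dichotomy in Step four: deciding whether a given finitely generated surface subgroup of the fundamental group of a hyperbolic JSJ piece is quasi-Fuchsian or a virtual fiber of some fibration over a $1$-orbifold. While this is in principle effective via tameness, the covering theorem, and the algorithmic consequences of Agol's Virtual Haken theorem, it is substantially more delicate than the remaining steps, and it is the reason Corollary \ref{decideSeparability} is to be read as an in-principle decidability statement rather than a practical algorithm. The enumeration in Step three is similarly unbounded a priori but terminates because $\pi_1$-injectivity can be tested once a candidate map has been produced, and a candidate must eventually appear.
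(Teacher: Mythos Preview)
Your overall strategy is correct and matches the paper's: reduce to computing the spirality character on a finite generating set of $H_1(\pfib(S);\Integral)$, after algorithmically recognizing the almost fiber part. The difference lies in how the virtual-fiber versus non-virtual-fiber dichotomy is decided for each JSJ subsurface $S_v$ carried by a piece $J$.

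You invoke tameness, the covering theorem, and algorithmic consequences of virtual specialness to decide geometric infiniteness in hyperbolic pieces, and a separate slope test in Seifert pieces. You yourself flag this step as delicate, and indeed you have not actually supplied an algorithm here---you have only named theorems that make the dichotomy meaningful. The paper sidesteps this entirely with a uniform group-theoretic trick: for each $S_v$, run three recursive enumerations in parallel---one searching for a nontrivial element of $\pi_1(S_v)$ that dies in $\pi_1(J)$ (certifying non-essentiality), one searching for an element of $\pi_1(J)$ normalizing but not lying in $\pi_1(S_v)$ (certifying virtual fiber), and one searching for a retraction from a finite-index subgroup of $\pi_1(J)$ onto $\pi_1(S_v)$ (certifying virtual retract). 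One of the three must halt by the known structure theory, and the second process moreover hands you the setup data $(S'_v,\phi'_v,\{c'_\delta\})$ needed to evaluate Formula~\ref{spiralityH}. This is both more elementary and more directly usable than your route.

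A secondary difference: rather than enumerating simplicial maps until one realizes a subgroup \emph{conjugate} to the given one (which presupposes an effective conjugacy test for surface subgroups that you have not justified), the paper enumerates all pairs $(S,h)$, decides separability for each, and then matches the input against this list via the Membership Problem, which is known to be solvable in $3$-manifold groups.
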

	
	Although the algorithm asserted by Corollary \ref{decideSeparability} is not
	practical for implementation, 
	the complexity of computation lies primarily in recognizing the almost fiber part 
	and understanding how the JSJ subsurfaces serve as virtual fibers.
	Once this piece of information is provided, the spirality character can be easily
	calculated, (Formula \ref{spiralityH}).
	Furthermore, the situation can be significantly improved 
	when the subsurface is immersed locally in a geometrically nice position.
	Especially, if the subsurface is an almost fiber
	transverse to a canonical suspension flow
	of a fibered cone,
	there is a formula for the spirality character 
	generalizing the work of Rubinstein--Wang
	most directly,
	(Formula \ref{formulaRW}).

	Let $\theta:F\to F$ be an orientation-preserving homeomorphism of
	an oriented closed surface $F$ of negative Euler characteristic.
	Denote by $M_\theta$ the mapping torus, which canonically fibers over 
	the circle with monodromy $\theta$.
	Inspired by the works \cite{RW,CLR-finiteFoliation,CLR-SIET},
	we investigate immersed subsurfaces $S$ transverse to 
	either the Seifert fibration or the pseudo-Anosov suspension flow
	in the JSJ pieces.
	
	%We say that a closed immersed subsurface $S$ of $M_\theta$
	%is \emph{JSJ parallel cutting}, if for each JSJ torus $T$, 
	%the JSJ curves carried by $T$ all cover one and the same essential
	%simple closed curve of $T$ up to homotopy.
	The following Corollary \ref{separableExamples} 
	provides a practical criterion
	for	aspirality without the
	necessity to understand the almost fiber part.
	See Subsection \ref{Subsec-surfaceAutomorphism} for further terminology
	about surface automorphisms.
		
	\begin{corollary}\label{separableExamples}
		Let $S$ be an oriented closed essentially immersed subsurface of $M_\theta$.
		Suppose all fractional Dehn twist coefficients of $\theta$  vanish.
		If $S$ can be homotoped to be	transverse to the JSJ tori and 
		to the Nielsen--Thurston suspension flow supported in the JSJ pieces,
		then $S$ is virtually embedded.
	\end{corollary}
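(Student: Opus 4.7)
The plan is to reduce the corollary to Theorem \ref{main-aspiralityCriterion} by showing that the hypothesis forces the spirality character of $\pfib(S)$ to take only the values $\pm 1$. First I would observe that, because $S$ is assumed to be transverse to the Nielsen--Thurston suspension flow in each JSJ piece of $M_\theta$, each JSJ subsurface of $S$ is either horizontal in a Seifert fibered piece or a geometrically infinite fiber-like surface transverse to the pseudo-Anosov suspension flow in a hyperbolic piece (carried by the fibered cone containing the global fibration). In either case the JSJ subsurface lies in the almost fiber part, so $\pfib(S)=S$, and it remains only to prove aspirality of $S$ itself.

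Next I would unpack the spirality character on $H_1(S;\Integral)$ via the formula promised in the introduction (Formula \ref{formulaRW}), which expresses $s(\gamma)$ as a product of local contributions along a loop $\gamma$ transverse to the JSJ curves. On a single JSJ subsurface the local contribution is the degree of the map from the subsurface onto the base orbifold (in a Seifert piece) or onto the transverse measured foliation quotient (in a hyperbolic piece) of the ambient flow; these degrees are integers but enter only through their signs, hence contribute $\pm 1$ to $s(\gamma)$ regardless of magnitude, provided the transition across each JSJ curve is trivial. Thus everything hinges on the transition factor at each JSJ curve $c\subset T$, which compares the two slopes induced on $T$ by the flows on its two sides with the slope of $c$ itself.

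I would then bring in the hypothesis that all fractional Dehn twist coefficients of $\theta$ vanish. Recall (Subsection \ref{Subsec-surfaceAutomorphism}) that the fractional Dehn twist coefficient at a reducing curve records the rotation number of $\theta$ at the boundary of an adjacent pseudo-Anosov or periodic component relative to the canonical boundary framing. When all these coefficients vanish, the degeneracy slope of the pseudo-Anosov suspension flow on each JSJ torus and the Seifert slope of each adjacent Seifert piece coincide with the common fiber slope coming from the global fibration $M_\theta\to S^1$. Consequently the two flow slopes on each JSJ torus are equal, and the transition factor at every JSJ curve $c$ is $\pm 1$. Combining this with the previous paragraph, $s$ takes values in $\{\pm1\}$, so $\pfib(S)$ is aspiral and Theorem \ref{main-aspiralityCriterion} gives virtual embeddedness.

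The main obstacle is the middle step: matching the local contributions of the spirality character to the geometric data of flow slopes on the JSJ tori, and then verifying that vanishing of the fractional Dehn twist coefficients is precisely what makes these slopes agree from both sides of each torus. This requires carefully identifying how Formula \ref{formulaRW} specializes when the almost fiber structure arises from transversality to a globally compatible suspension flow, and checking that the $\pm 1$ ambiguity is genuinely unavoidable only in the sign, not in any further scaling factor coming from a discrepancy between the pseudo-Anosov dilatation and the Seifert multiplicities. Once the slope identification is established, aspirality follows formally, and the rest of the argument is a direct appeal to the main theorem.
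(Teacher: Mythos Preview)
Your overall strategy is the paper's: apply Formula \ref{formulaRW} to the flow-transverse almost fiber part, use the vanishing of the fractional Dehn twist coefficients to force the spirality to be trivial, and then invoke Theorem \ref{main-aspiralityCriterion}. However, your account of Formula \ref{formulaRW} is not accurate, and this leads to a real gap.

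Formula \ref{formulaRW} does not have ``degree'' contributions from the interior of JSJ subsurfaces that automatically reduce to signs. It factors as $s(\alpha)=\sigma_M(\alpha)\,\rho_M(\alpha)$, where $\sigma_M(c_i)=\gin(c_i,l^-)/\gin(c_i,l^+)$ records the intersection numbers of each JSJ curve with the two degeneracy slopes, and $\rho_M(\alpha_i)=\ell(l_{\mathtt{ini}})/\ell(l_{\mathtt{ter}})$ is the ratio of flow-lengths of degeneracy slopes at the two ends of each subpath. Your argument correctly handles $\sigma_M$: vanishing fractional Dehn twist coefficients means the degeneracy slopes from the two sides of each JSJ torus agree (not, incidentally, that they equal the slope of the global fiber $F\cap T$, which is a different and unnecessary claim), so each $\sigma_M(c_i)=1$.

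The gap is $\rho_M$. This factor is not a priori $\pm1$: periodic points of $\theta_{\pA}$ on different boundary components of a pseudo-Anosov piece can have different periods, and then the degeneracy slopes on those components have different flow-lengths, so $\rho_M(\alpha_i)\neq 1$ in general. The paper deals with this by first passing to a finite cyclic cover of $M_\theta$ dual to $F$ (equivalently replacing $\theta$ by a power) so that every periodic point of $\theta_{\pA}$ on the boundary is fixed; Lemma \ref{equiperiodic} then gives $\rho_M(\alpha)=1$. Your proposal neither performs this reduction nor gives an alternative argument, and your remark that the interior contributions ``enter only through their signs'' is precisely where the missing $\rho_M$ factor is being swept away. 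Once you insert the finite-cover step and invoke Lemma \ref{equiperiodic}, the rest of your outline goes through.
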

		
	On the other hand, we are able to produce essentially immersed but not
	virtually embedded closed subsurfaces, under the circumstances complementary to
	Corollary \ref{separableExamples}:
	%The construction invokes techniques developed in \cite{PW-graph,PW-mixed,DLW}.
			
	\begin{corollary}\label{nonSeparableExamples}
		If $\theta$ has a nontrivial fractional Dehn twist coefficient,
		then $M_\theta$ contains a closed essentially immersed subsurface
		that is not virtually embedded.
	\end{corollary}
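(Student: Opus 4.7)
The plan is to construct a closed essentially immersed subsurface $S\subset M_\theta$ whose spirality character takes a value outside $\{\pm1\}$, and then to invoke Theorem \ref{main-aspiralityCriterion} in the contrapositive: if $\pfib(S)$ is not aspiral, then $S$ is not virtually embedded. Let $\gamma\subset F$ be a reducing curve of $\theta$ whose fractional Dehn twist coefficient $c=p/q$ is nonzero. After passing to a cyclic cover of $M_\theta$ (which only helps, since a subsurface in a finite cover that is not virtually embedded descends to one in $M_\theta$ with the same property), I may assume $\theta$ fixes $\gamma$ setwise and preserves each side. The curve $\gamma$ then yields a JSJ torus $T\subset M_\theta$ separating two JSJ pieces $N_\pm$; each is Seifert fibered (if the adjacent Nielsen--Thurston component of $\theta$ is periodic) or hyperbolic (if pseudo-Anosov). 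Either way $N_\pm$ carries a canonical flow, the Seifert fibration or the pseudo-Anosov suspension flow, and the nonvanishing of $c$ records a twist between this canonical flow direction on $T$ and the natural basis $(m,[\gamma])\in H_1(T;\Integral)$, where $m$ is the $S^1$-meridian of the mapping torus.

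Following the Rubinstein--Wang template, I next build $S$ as follows. In each $N_\pm$ the restriction $F\cap N_\pm$ is properly embedded and transverse to the canonical flow, so it is a horizontal subsurface in the Seifert case or a geometrically infinite fibered subsurface in the hyperbolic case. I take a controlled number of parallel copies of $F\cap N_\pm$ on each side of $T$ and reglue along $T$ with a nonzero power of the Dehn twist about $\gamma$; this regluing makes geometric sense precisely because the fractional Dehn twist coefficient $c$ is nonzero, so the Dehn twist is not absorbed by the canonical monodromy. At the remaining JSJ tori I match gluings by passing to appropriate finite covers of the other JSJ pieces; commensurability of the relevant subgroups guarantees that compatible matching covers exist. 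The output is a connected closed essentially immersed subsurface $S\subset M_\theta$ all of whose JSJ subsurfaces are horizontal or geometrically infinite, so $\pfib(S)=S$.

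Finally, I evaluate the spirality on a loop $\alpha\subset\pfib(S)$ that crosses $T$ exactly once and otherwise lies in a single copy of $F$ on each side. By the spirality formula (Formula \ref{spiralityH}, specialized in the almost-fiber setting via Formula \ref{formulaRW}), $s([\alpha])$ is the ratio of the vertical degrees of $S$ in $N_+$ and $N_-$ with respect to the canonical flows, multiplied by a factor coming from the regluing twist; this factor is a nontrivial rational function of $c=p/q$, and for a suitable choice of twist power it is manifestly not $\pm1$. Hence $\pfib(S)$ is not aspiral, and Theorem \ref{main-aspiralityCriterion} yields that $S$ is not virtually embedded. The main obstacle is the middle step: ensuring that $S$ closes up globally without the auxiliary cover-matching introducing spirality factors at other JSJ tori that happen to cancel the $c$-contribution at $T$. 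I expect to control this by choosing the auxiliary covers to factor through the fiber surface homology, so that their contributions to the spirality character are $\pm1$, leaving the nontrivial $c$-contribution at $T$ unopposed.
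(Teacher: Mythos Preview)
Your overall strategy --- build an almost-fiber piece near the distinguished JSJ torus $T$, detect nontrivial spirality there via Formula~\ref{formulaRW}, and cap off the rest so as not to disturb the computation --- matches the paper's. But the core construction you propose does not produce nontrivial spirality, and this is a genuine gap rather than a detail.

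If you take parallel copies of the fiber $F\cap N_\pm$ on each side, every boundary component on $T$ is a copy of the reduction curve $e=\gamma$. The spirality contribution at a crossing of $T$ is $\sigma_M(c)=\gin(c,l^-)/\gin(c,l^+)$, which depends only on the \emph{slope} of the boundary curve $c$ on $T$, not on how the two sides are glued. For $c=e$ one has $\gin(e,l^\pm)=1$, so $\sigma_M(e)=1$ regardless of the fractional Dehn twist coefficient. A Dehn twist along $\gamma$ fixes $\gamma$ and hence does not change the boundary slope; it only alters the gluing pattern among the parallel copies. Thus your loop $\alpha$ will have $s(\alpha)=1$ and the argument collapses. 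The nonvanishing of $c=p/q$ is not used anywhere in your computation: it records a discrepancy between $l^+$ and $l^-$, but that discrepancy is invisible to curves parallel to $e$.

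What the paper does instead is to force the boundary slope on $T$ away from $e$. It passes to finite covers $\tilde J^\pm$ of the adjacent pieces with enough boundary homology, and then chooses relative classes $\beta^\pm\in H_2(\tilde J^\pm,\partial\tilde J^\pm)$ whose boundary on one elevation $\tilde T_1^\pm$ is a prescribed multiple of $[\tilde l^\pm]+r^\pm[\tilde e]$ (and vanishes on another elevation $\tilde T_0^\pm$). Adding a large multiple of the fiber class pushes these into the fibered cone, so the resulting surfaces $E^\pm$ are flow-transverse; the integers $r^\pm$ are chosen with $r^+-r^-=-k$ precisely so that the two boundary curves on $T$ agree as slopes and can be glued. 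Now $\sigma_M(c_1)=(pr^-+q)/(pr^++q)\neq 1$. The remaining boundary is capped off not by ad hoc cover-matching but by partial PW subsurfaces from \cite{DLW}, which are virtually embedded and contribute no geometrically infinite pieces in hyperbolic blocks; this is exactly the mechanism that keeps the extra JSJ crossings from interfering with the spirality calculation. Your proposal would need an analogous device both to tilt the boundary slope and to close up cleanly.
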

	
	In particular, $\pi_1(M_\theta)$ cannot be LERF or even surface subgroup separable 
	under the assumption of Corollary \ref{nonSeparableExamples}.
	Take the double of a pseudo-Anosov map of a bounded surface
	and compose with a nontrivial Dehn twist along 
	a doubling curve.
	Since the surface automorphism has a nontrivial fractional Dehn twist coefficient
	but does not have Seifert fibered pieces in the mapping torus,
	we find a counter-example to disprove \cite[Conjecture 9.1]{AFW-group}.
	Another consequence of Corollary \ref{nonSeparableExamples}, probably well known
	to experts, is that
	virtually fibered non-geometric graph manifolds 
	all fail to have surface subgroup separability of the fundamental group.
	In fact, one may also
	remove the virtually fibered condition by a slightly more careful construction
	--- compare \cite{Ne,NW,RW}. 	
	Suggested by these examples, we are inclined to propose the following:
	\begin{conjecture}\label{nonGeometricImpliesNonseparable}
		Every non-geometric closed aspherical $3$-manifold 
		contains a closed essentially immersed	subsurface which is not virtually embedded.
	\end{conjecture}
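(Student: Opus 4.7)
The plan is to apply Theorem \ref{main-aspiralityCriterion}: it suffices, for a given non-geometric closed aspherical $3$-manifold $M$, to construct a closed essentially immersed subsurface $S$ whose almost fiber part $\pfib(S)$ carries a spirality character taking some value other than $\pm 1$. I would assemble $S$ piece by piece along the JSJ decomposition, with prescribed local behavior designed to witness nontrivial spirality along a suitable cycle or path in the dual graph.

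First I would analyze the dual graph $\Gamma$ of the JSJ decomposition. Since $M$ is non-geometric, $\Gamma$ is nonempty; moreover, either $\Gamma$ contains a cycle, or $\Gamma$ is a tree joining pieces of more than one geometric type (otherwise $M$ would be Seifert fibered or hyperbolic). I would then identify a subgraph to support the almost fiber part of the eventual $S$: in each Seifert fibered piece $P$ along that subgraph, choose a horizontal properly immersed essential subsurface; in each hyperbolic piece $H$, invoke Agol's virtual fibering theorem to select a virtual fiber in some finite cover. These pieces would be glued across the JSJ tori; for the gluing to produce a closed subsurface, the boundary slopes on each JSJ torus must match between adjacent pieces, which one generally arranges only after lifting to a common finite cover of $M$.

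The spirality would then be read off from Formula \ref{spiralityH}. Around any loop in $\Gamma$ that the almost fiber part traverses, the value is a product of local contributions: for a Seifert piece, a ratio of intersection numbers of the horizontal surface with the Seifert fibers on two adjacent JSJ tori; for a hyperbolic piece, an analogous ratio controlled by how the virtual fiber meets its boundary components. The strategy is to engineer these local ratios so that their overall product deviates from $\pm 1$. When $\Gamma$ contains a cycle through at least one Seifert piece, this essentially replays the Rubinstein--Wang \cite{RW} construction, choosing horizontal surfaces with mismatched section degrees, and one can hope that a suitable generalization covers all graph manifold and mixed cases with Seifert vertices; in a tree with mixed geometric types, one would instead try to exploit a mismatch between a Seifert fibration on one side of a JSJ torus and the boundary slope of a virtual fiber in an adjacent hyperbolic piece.

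The main obstacle is the relative rigidity of virtual fibers in hyperbolic pieces. Whereas horizontal surfaces in a Seifert piece admit a flexible choice of boundary slopes and fiber degrees, the admissible boundary slopes of virtual fibers in a hyperbolic piece $H$ are constrained by the fibered cones in $H^1(H;\Real)$, and thus are not freely adjustable. Realizing a global matching across the JSJ tori with nontrivial overall spirality therefore requires a fine understanding of the boundary behavior of virtual fibers of the hyperbolic pieces, together with enough flexibility — extracted from taking deeper finite covers and varying within fibered faces — to align them with the section data of the adjacent Seifert sides. A secondary and perhaps more serious challenge is the purely hyperbolic case, where $\Gamma$ has only hyperbolic vertices: the entire candidate surface must be built from virtual fibers, and arranging the product of local slope ratios to avoid $\pm 1$ seems substantially more delicate, and likely requires new input beyond what is developed here.
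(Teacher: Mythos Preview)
The statement you are attempting to prove is labelled \emph{Conjecture} in the paper and is not proved there; the paper proposes it as an open problem after establishing the special case of Corollary~\ref{nonSeparableExamples} (mapping tori with a nontrivial fractional Dehn twist coefficient). So there is no proof in the paper to compare your attempt against, and your proposal should be read as an outline toward an open problem rather than as a reconstruction of an existing argument.

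That said, your outline contains a concrete error and, as you yourself note, a genuine gap. The dichotomy ``either $\Gamma$ contains a cycle, or $\Gamma$ is a tree joining pieces of more than one geometric type (otherwise $M$ would be Seifert fibered or hyperbolic)'' is false: a tree of Seifert fibered pieces glued along incompressible tori is typically a non-geometric graph manifold, and a tree of hyperbolic pieces is likewise non-geometric. So the case analysis does not exhaust the possibilities in the way you suggest. More seriously, the purely hyperbolic case --- a non-geometric $M$ all of whose JSJ pieces are hyperbolic --- is exactly the case the paper singles out as new (its example disproving \cite[Conjecture 9.1]{AFW-group} is of this type, but is produced only under the extra hypothesis of a nontrivial fractional Dehn twist coefficient). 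Your plan to ``engineer local ratios'' via Formula~\ref{spiralityH} runs into the difficulty you identify: the boundary data of virtual fibers in hyperbolic pieces are governed by fibered cones and are not freely tunable, and no mechanism in the paper guarantees that one can always realize a mismatch. In short, your proposal correctly locates where the obstruction should live and what tool would detect it, but it does not supply the missing construction, and the paper does not either --- which is precisely why the statement remains a conjecture.
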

	
	The construction for Corollary \ref{nonSeparableExamples} relies on techniques
	developed by Przytycki--Wise \cite{PW-graph,PW-mixed,PW-embedded}, which
	can be used to produce
	closed or bounded virtually essentially embedded subsurfaces of $3$-manifolds 
	that intersect JSJ tori in certain controllable pattern.
	These so-called partial PW subsurfaces
	have been further investigated in \cite[Section 4]{DLW}.
	Understanding in details how such subsurfaces become virtually embedded
	leads to constructions crucial to the proof of
	Theorem \ref{main-aspiralityCriterion}.
	
	Theorem \ref{main-aspiralityCriterion} is inspired by previous works
	of various authors. In Przytycki--Wise \cite{PW-mixed}, we notice that an essentially immersed
	subsurface without geometrically infinite JSJ subsurfaces carried by hyperbolic pieces 
	is virtually embedded as long as the graph-manifold part 
	is virtually embedded, 
	and the obstruction to virtual embedding 
	lies	only in the horizontal part, known to be Rubinstein--Wang's invariant $s$.
	In Rubinstein--Wang \cite{RW} and Cooper--Long--Reid \cite{CLR-finiteFoliation,CLR-SIET}, 
	we notice that horizontally immersed subsurfaces of graph manifold and geometrically
	infinite subsurfaces in hyperbolic $3$-manifolds have certain similar behavior,
	especially if the $3$-manifold is fibered and the subsurface
	is transverse to a canonical suspension flow. 
	Hinted by the observations, we anticipate
	a uniform treatment for the almost fiber part, 
	and an extension of Przytycki--Wise's work 
	to handle the general situation.
	By intuition,
	it would be very surprising if a closed orientable essentially embedded subsurface failed
	to be virtually taut, or in other words, virtually Thurston norm minimizing.
	Having observed that fibers or retracts of JSJ pieces
	are taut subsurfaces, we are able to strengthen
	the argument for virtual embedding
	to show virtual tautness.	
	
	In Section \ref{Sec-prelim}, we review preliminary materials in $3$-manifold
	topology and introduce some terminology. 
	In Sections \ref{Sec-theFirstRecurrenceMap}, \ref{Sec-theSpiralityCharacter}, we define and study
	the spirality character of the almost fiber part; in Sections \ref{Sec-separability}, \ref{Sec-virtualTautness},
	we prove the main result Theorem \ref{main-aspiralityCriterion}.
	In Sections \ref{Sec-flowTransverse}, \ref{Sec-application}, we introduce pseudo graph manifolds,
	a generalization of graph manifolds containing atoroidal pieces equipped with
	pseudo-Anosov suspension flow structures.
	We derive a practical formula to calculate the spirality character for subsurfaces 
	transverse to flow in pseudo graph manifolds, and prove
	Corollaries \ref{separableExamples}, \ref{nonSeparableExamples}.
	In Section \ref{Sec-decisionProblems}, we address the decision problem for surface subgroup separability
	using the spirality character, and  prove Corollary \ref{decideSeparability}.
	In Section \ref{Sec-conclusions}, we propose a few related questions.
	
	\bigskip\noindent\textbf{Acknowledgement}. The author thanks Yi Ni and Alan Reid
	for valuable communications. The author would also like to thank the anonymous referee for
	pointing out some incorrect formulations in Proposition \ref{psiAlpha} and Formula \ref{formulaRW}
	in an earlier manuscript.

\section{Preliminaries}\label{Sec-prelim}
	In this section, we review JSJ decomposition of $3$-manifolds \cite{JS,Joh,Th-book} 
	and the Nielsen--Thurston classification of 
	surface automorphisms \cite{Ni,Th-NT}.
	We refer to the survey \cite{AFW-group} for topics related to virtual properties of $3$-manifolds.
	
	\subsection{JSJ decomposition}\label{Subsec-JSJDecomposition}
		
		Let $M$ be a compact orientable irreducible $3$-manifold.
		The Jaco--Shalen--Johanson (JSJ) decomposition
		asserts that $M$ can be cut
		along a minimal collection
		of essential tori, canonical up to isotopy,
		into Seifert fibered pieces and atoroidal pieces.
		We call the minimal collection of tori the \emph{JSJ tori}, and
		the pieces the \emph{JSJ pieces}. The JSJ decomposition
		is a graph-of-spaces decomposition of $M$ whose vertices and edges
		naturally correspond to the JSJ pieces and the JSJ tori, respectively.
		It induces a graph-of-group decomposition of $\pi_1(M)$ up to natural
		isomorphism with respect to choices of base points.
		
		\subsubsection{Structure of JSJ pieces}
		We say that a compact orientable irreducible $3$-manifold is \emph{nonelementary} if its fundamental group
		is not virtually solvable. By Hyperbolization,
		a nonelementary atoroidal JSJ piece is $\Hyp$-geometric, and the
		hyperbolic structure is unique up to isometry. 
		A nonelementary Seifert fibered JSJ piece is a circle bundle over a hyperbolic $2$-orbifold,
		and the Seifert fibration structure is canonical up to isotopy.
		When passing to a finite cover of $M$, every nonelementary JSJ piece elevates 
		to be JSJ pieces of the cover.
		On the other hand,
		if $M$ is not itself elementary, any elementary JSJ piece
		of $M$ is homeomorphic to the \emph{orientable thickened Klein bottle}, namely,
		the (characteristic) twisted compact interval bundle over a Klein bottle whose total space is orientable.
		When passing to a finite cover of $M$,
		an elementary JSJ piece may elevate to be a regular neighborhood
		of a JSJ torus. In fact, if $M$ is not elementary, there is always a double cover of
		$M$ in which there are no elementary JSJ pieces.
		
		\subsubsection{Induced decomposition on subsurfaces}
		Let $M$ be an orientable closed aspherical $3$-manifold,
		and $S$	be a closed essentially immersed subsurface.
		Up to homotopy, $S$ intersects the JSJ tori of $M$
		in a minimal finite (possibly empty) collection of disjoint essential 
		curves of $S$, called the \emph{JSJ curves},
		and the complementary components of the union of the JSJ curves
		are essential subsurface of $S$, called the \emph{JSJ subsurfaces}.
		The decomposition of $S$ above induced from the JSJ decomposition of 
		$M$ is unque up to isotopy,
		which we will refer to as the \emph{induced JSJ decomposition}.
		If $S$ is	nonelementary, every JSJ subsurface
		in an elementary JSJ piece is an essential sub-band of $S$.
		A JSJ subsurface in a nonelementary atoroidal JSJ piece
		is either geometrically finite or geometrically infinite
		with respect to the canonical hyperbolic
		geometry.	A JSJ subsurface in a nonelementary Seifert fibered piece
		is either vertical or horizontal with respect to 
		the canonical Seifert fibration. It is now a known fact that
		any JSJ subsurface can be lifted to be embedded in some finite cover
		of the carrying JSJ piece.
		
		A properly embedded subsurface of a compact orientable $3$-manifold is 
		said to be an (ordinary or semi) \emph{fiber}, if the $3$-manifold
		is a surface bundle over a $1$-orbifold and the subsurface
		is isotopic to a fiber. The base $1$-orbifold can be a circle or an interval
		if the subsurface is orientable, or a semi-circle or a semi-interval
		if the subsurface is nonorientable.
		A \emph{virtual fiber} is a properly immersed subsurface which can be lifted
		into some finite cover the $3$-manifold to become a fiber up to homotopy
		relative to boundary.
		In particular, we have three types of JSJ subsurfaces which
		are virtual fibers, namely, JSJ subsurfaces in an elementary piece, or 
		geometrically infinite in a nonelementary atoroidal piece, or
		horizontal in a nonelementary Seifert fibered piece.
		 
		\begin{definition}
		In the dual graph $\Lambda$ of the induced JSJ decomposition of $S$,
		the vertices that correspond to the virtual-fiber JSJ subsurfaces of $S$
		span a complete subgraph $\pfib(\Lambda)$ of $\Lambda$.
		The \emph{almost fiber part} of $S$ is defined to be the essential subsurface
			$$\pfib(S)\,\subset\,S$$
		dual to the complete subgraph $\pfib(\Lambda)$.
		\end{definition}
		
	\subsection{Classification of surface automorphisms}\label{Subsec-surfaceAutomorphism}
		Let $F$ be an oriented closed surface of negative Euler characteristic, and
		$\theta:F\to F$ be an \emph{automorphism}, namely, an orientation-preserving
		homeomorphism.
		By the Nielsen--Thurston classification of surface automorphisms,
		there exists a unique minimal collection of essential curves up to isotopy,
		called the \emph{canonical reduction system}, such that
		$\theta$ can be isotoped to preserve a regular neighborhood of the union of the reduction
		curves, and the first return map to each complementary components
		is either periodic or pseudo-Anosov up to isotopy. 
		Hence the compact closures	of the complement components 
		form two canonical subunions invariant under $\theta$, namely, 
		the \emph{periodic part} and the \emph{pseudo-Anosov part}.
		We denote by $\theta_\periodic$ and $\theta_\pA$ the periodic or pseudo-Anosov
		automorphisms on these parts respectively, which are unique up to topological
		conjugacy and freely isotopic to the restrictions of $\theta$.
				
		\subsubsection{Suspension flows}
		Denote by $M_\theta$ the mapping torus of $\theta$, namely,
				$$M_\theta\,=\,\frac{F\times[0,1]}{(x,0)\sim (\theta(x),1)}.$$
		The JSJ pieces of $M$ are equipped with a canonical flow suspending $\theta_\periodic$
		or $\theta_\pA$, which we will call the \emph{Nielsen--Thurston suspension flow}.
		On each boundary component of a JSJ piece, there exists a closed leaf and different closed leaves
		are parallel to each other. We will call any such leaf as a \emph{degeneracy slope} of the JSJ
		piece, borrowing a term from \cite{GaO}.		
		On a Seifert fibered piece, this flow is exactly the Seifert fiberation, and the degeneracy slope
		is an ordinary fiber.
		On an atoroidal piece, the flow is a pseudo-Anosov flow, which preserves two singular
		codimension-$1$ foliations transverse along flow lines coming from the suspension
		of the stable and unstable foliations of $\theta_\pA$.
		A JSJ torus $T$ of $M$ receives two degeneracy slopes from the JSJ pieces on both sides,
		and they do not match with each other in general.
		
		\subsubsection{Fractional Dehn twist coefficients}
		We introduce a notion of fractional Dehn twist coefficients which generalizes a definition
		of Honda--Kazez--Mati\'{c} \cite{HKM}.
		Given a surface automorphism $\theta:F\to F$ as above, a sufficiently high power $\theta^m$
		preserves each reduction curve $e_1,\cdots,e_r$, and all the periodic points
		of the restrictions of $\theta_\periodic^m$ and $\theta_\pA^m$ on $e_1,\cdots,e_r$ are fixed points.
		Since the restriction of $\theta_\pA^m$ on the boundary of the pseudo-Anosov part
		is isotopic to the identity relative to the fixed points,
		we may extend by the identity and
		regard $\theta_\pA^m$ as an automorphism of $F$ up to isotopy.
		Denote by $D(e_i)$ the (right-hand) Dehn twist of $F$ along $e_i$.
		There is a factorization of $\theta$ into a commutative product
		\begin{equation}
			\theta^m\,=\,D(e_1)^{k_1}\cdots D(e_r)^{k_r}\theta_\pA^m,
		\end{equation}
		up to isotopy, where $k_i$ are  integers uniquely determined by $\theta$ and $m$.
		We define the \emph{fractional Dehn twist coefficient} of $\theta$ along $c$ to be
		\begin{equation}
			\mathbf{c}(\theta,e_i)\,=\,k_i\,/\,m.
		\end{equation}
		The definition does not depend on the choice of $m$,
		and indeed, $\mathbf{c}(\theta^n,e_i)$ equals $n\cdot\mathbf{c}(\theta,e_i)$.
		For a reduction curve $e_i$ carried by a JSJ torus $T$ of $M_\theta$,
		the fractional Dehn twist coefficient of $\theta$ vanishes
		along $e_i$ exactly when
		the degeneracy slopes induced from both sides of $T$ match up with each other.
		Of course,
		this vanishing occurs only if the reduction curve is adjacent to
		the pseudo-Anosov part on at least one side.

\section{Spirality of loops in the almost fiber part}\label{Sec-theFirstRecurrenceMap}
	We start our discussion with a descriptive definition of spirality,
	from a dynamical perspective.
	Let $S$ be a closed essentially immersed subsurface  
	of an orientable closed aspherical $3$-manifold $M$.
	
	For a JSJ curve $c$ contained in the almost fiber part $\pfib(S)$, 
	fix a	basepoint $*$ in $c$. Then $M$ has
	an induced basepoint $x_0$, and 
	the subgroup $\pi_1(S,*)$ of $\pi_1(M,x_0)$ 
	gives rise to a covering space $\tilde{M}_S$
	of $M$ with a lifted basepoint $\tilde{x}_0$.
	%and a based embedded lift of $S$.
	Denote by $A_c$ the discrete subset of the JSJ
	cylinder $\tilde{T}$ that carries the based lift of $c$,
	such that $A_c$ consists of the lifts of $x_0$ 
	in $\tilde{T}$.
	Since $\tilde{T}$ is a finitely generated rank-$1$ abelian cover
	of the JSJ torus $T$ of $M$ carrying $c$,
	$A_c$ can be identified with the deck transformation group,
	with $\tilde{x}_0$ the trivial element.
	For any closed path $\alpha:[0,1]\to \pfib(S)$ based at $*$, 
	the lifts of $\alpha$ based in $A_c$ naturally induce a map 
		$$\psi_\alpha:\,A_c\to\tilde{M}_S,$$
	taking any point to the terminal endpoint of the lift of $\alpha$
	based at that point.
	It turns out that the subset of $A_c$:
		$$D_\alpha(A_c)\,=\,\psi^{-1}_\alpha(A_c)\cap A_c$$
	contains a finite index subgroup of $A_c$.
	We will see that the system of restrictions of $\psi_\alpha$ to $D_\alpha(A_c)$:
		$$(\psi_\alpha,\,D_\alpha(A_c))$$
	gives rise to a partial action of $\pi_1(\pfib(S),*)$ on $A_c$.
	Moreover, we have the following description of 
	the partial transformations $\psi_\alpha$ on $A_c$ as partial dilatations.
	See Subsection \ref{Subsec-partialDilatations}
	for definitions and elementary properties.
		
	\begin{proposition}\label{psiAlpha}
		Based lifting	induces a canonical partial action of $\pi_1(\pfib(S),*)$ 
		on $A_c$ by partial dilatations.
	\end{proposition}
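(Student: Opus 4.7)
The plan is to establish the proposition by a local-to-global analysis along the JSJ decomposition of $\pfib(S)$, reducing the computation of $\psi_\alpha$ to a concatenation of elementary pieces each contained in a single JSJ subsurface. First I would subdivide any based loop $\alpha$ into a concatenation $\alpha=\alpha_k\cdots\alpha_1$ of arcs such that each $\alpha_i$ lies in a single JSJ subsurface $R_i\subset\pfib(S)$ and begins and ends on JSJ curves $c_{i-1},c_i$ (with $c_0=c_k=c$). Because each $R_i$ is a virtual fiber of its carrying JSJ piece $P_i$, I can pass to a finite cover $\hat{P}_i\to P_i$ in which $R_i$ lifts to an embedded semi-fiber $\hat{R}_i$ of a surface bundle structure $\hat{P}_i\to B_i$ over a $1$-orbifold. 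This cover is the essential geometric object on which the scaling behavior of lifts becomes transparent.

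Next I would analyze a single piece. In $\hat{P}_i$, the preimages of the JSJ tori at the two ends split into components, and the boundary curves of $\hat{R}_i$ determine well-defined slopes on them; the fibration direction provides a $\mathbb{Z}$-indexing of the translates of $\hat{R}_i$ by their intersection with any vertical arc. For any starting point $a\in A_{c_{i-1}}$, the based lift of $\alpha_i$ in $\tilde M_S$ factors through $\hat{P}_i$, and its terminal point lies on a translate of $\hat{R}_i$ whose index is an affine function of $a$ with slope equal to the ratio of the homological degrees of $\partial\hat{R}_i$ on the two boundary tori. Projecting back, this shows $\psi_{\alpha_i}$ restricted to a suitable finite-index subgroup of $A_{c_{i-1}}$ lands in $A_{c_i}$ and acts there as an affine map $n\mapsto\lambda_i n+b_i$ with $\lambda_i\in\Rational^\times$, i.e.\ as a partial dilatation. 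Composing over $i=1,\ldots,k$ yields $\psi_\alpha$ as the composition of partial dilatations on the common part of their domains, which by the elementary properties reviewed in Subsection \ref{Subsec-partialDilatations} is itself a partial dilatation whose domain $D_\alpha(A_c)$ contains a finite-index subgroup of $A_c$.

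Finally I would package this assignment $\alpha\mapsto(\psi_\alpha,D_\alpha(A_c))$ into a partial action of $\pi_1(\pfib(S),*)$. Homotopy invariance follows from the standard homotopy lifting property applied in $\tilde M_S$: a based homotopy between two loops lifts to a homotopy of the corresponding arcs beginning at any point of $A_c$, and the endpoints slide within the discrete set $A_c$ hence remain fixed. The partial action axiom $(\psi_\beta,D_\beta)\circ(\psi_\alpha,D_\alpha)=(\psi_{\beta\alpha},D_{\beta\alpha})$ on the appropriate intersection of domains is then a direct consequence of concatenation of path lifts. The main obstacle I anticipate is keeping the rational scaling factor coherent across JSJ curves where two JSJ subsurfaces of $S$ may lift into distinct finite covers of neighboring JSJ pieces, and in particular where $c$ is incident to $R_i$ and $R_{i+1}$ from opposite sides of the JSJ torus: the identification of $A_c$ with the deck group on one side must be reconciled with the fibration data on the other, which in the hyperbolic case relies on the virtual fibration of geometrically infinite subsurfaces and a careful bookkeeping of boundary slopes, and in the Seifert case amounts to the degree computation underlying the Rubinstein--Wang invariant.
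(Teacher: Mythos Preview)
Your proposal is correct and follows the same strategy as the paper: subdivide $\alpha$ along JSJ curves, use the virtual-fiber structure of each carrying JSJ subsurface to show each elementary $\psi_i$ is a partial dilatation, and compose. The paper's version works directly in the induced piece $\tilde{J}_v\subset\tilde{M}_S$ (a virtually infinite cyclic cover of $J_v$) and exhibits an infinite-order boundary-preserving deck transformation $\tau$ with $\psi_i(ma_{i-1})=ma_i$; your finite cover $\hat{P}_i$ is essentially the quotient $\tilde{J}_v/\langle\tau\rangle$, so the two viewpoints coincide.

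Two minor corrections. First, the paper's Definition~\ref{pseudoDilatation} of a partial dilatation is \emph{linear} ($\psi(qv)=pv$), not affine; with the origins of $A_{i-1},A_i$ placed at the consecutive transition points $y_{i-1},y_i$ along the based lift, your constant term $b_i$ vanishes automatically, so you should drop it rather than call an affine map a partial dilatation. Second, you omit the case where the JSJ subsurface $S_v$ is nonorientable, which the paper handles separately (Lemma~\ref{eachPseudoDilatation}) by passing to the orientable double cover $S_v^*$ and observing that the deck transformation $\tau^*$ commutes with the covering involution on the boundary cylinders, hence descends. Finally, the coherence obstacle you anticipate at the end is not an issue: each $A_i$ is the deck group of the cylinder $\tilde{T}_i\subset\tilde{M}_S$, an object intrinsic to that cylinder and independent of which adjacent piece you approach from, so no matching across sides is needed.
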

	
	\begin{definition}\label{spiralityOfLoops}
		The \emph{spirality} of a closed path $\alpha:[0,1]\to\pfib(S)$ based at $*$
		is defined to be the dilatation rate of the partial transformation $\psi_\alpha$ on $A_c$, denoted as
			$$s(\alpha)\,=\,\lambda(\psi_\alpha).$$
	\end{definition}
	
	\begin{remark}
		We will extend the definition to $1$-cycles of $\pfib(S)$ (Definition \ref{spiralityCharacter}).
	\end{remark}
	
	The partial dilatation description of the partial transformations 
	reveals certain asymptotic nature of 
	the spirality. To visualize, let us say that a subsegment of an elevation of $\alpha$
	is spiral if it intersects $A_c$ in a geometric sequence of points
	$a,ar,\cdots,ar^m$, where $r$ is the spirality of $\alpha$.
	Then it can be implied from the definition of partial dilatations
	(Definition \ref{pseudoDilatation}) that 
	there are no bi-infinitely spiral elevations of $\alpha$
	in $\tilde{M}_S$, but there are
	always elevations with arbitrarily long spiral segments. When the spirality
	$s(\alpha)$ is a positive integer, many elevated lines of $\alpha$ in $\tilde{M}_S$ 
	are infinitely spiral in one direction toward an end of $\tilde{T}$. 

	We prove Proposition \ref{psiAlpha} in the rest of this section.
	
	\subsection{Partial dilatations}\label{Subsec-partialDilatations}
		Let $A$ be a finitely generated abelian group of rank at least $1$.
		
		\begin{definition}\label{pseudoDilatation}
			A setwise map $\psi:D(A)\to A$ is called
			a \emph{partial dilatation} of $A$ if 
			there exist a pair of nonzero integers
			$p$, $q$ such that $D(A)$ contains $qA$, and
				$$\psi(qv)\,=\,pv.$$
			holds for all $v\in A$.
		\end{definition}
				
		\begin{lemma}
			The ratio $p/q$ of a partial dilatation $\psi$ does not depend on the choice of defining integers $p$, $q$.
			%Moreover, any injective extension of a partial dilatation is also a partial dilatation with the same
			%ratio.
		\end{lemma}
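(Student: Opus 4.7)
The plan is to argue that any two defining pairs must satisfy $pq' = p'q$ by evaluating $\psi$ on a common element in two different ways. Suppose $(p,q)$ and $(p',q')$ are both pairs of nonzero integers satisfying Definition \ref{pseudoDilatation}; so $qA\subseteq D(A)$, $q'A\subseteq D(A)$, and
\begin{equation*}
\psi(qv)=pv,\qquad \psi(q'v)=p'v
\end{equation*}
for all $v\in A$.

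The key observation is that the element $qq'v\in A$ can be written both as $q\cdot(q'v)$ and as $q'\cdot(qv)$. Applying the first defining relation to the vector $q'v\in A$ gives $\psi(qq'v)=p(q'v)=pq'v$, while applying the second relation to $qv\in A$ gives $\psi(qq'v)=p'(qv)=p'qv$. Equating these two expressions yields
\begin{equation*}
(pq'-p'q)\,v\,=\,0
\end{equation*}
for every $v\in A$.

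Now I use the rank hypothesis: since $A$ is finitely generated abelian of rank at least $1$, there exists some $v\in A$ of infinite order. For such a $v$, the identity $(pq'-p'q)v=0$ forces $pq'-p'q=0$ as an integer, so $p/q=p'/q'$. The only potential subtlety is that $\psi$ is merely a setwise map rather than a homomorphism, but this is irrelevant here: we never need to distribute $\psi$ over a sum, only to apply the defining relation at two explicitly chosen arguments. Thus no real obstacle arises, and the argument is essentially a two-line cancellation.
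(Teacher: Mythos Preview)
Your proof is correct and follows essentially the same route as the paper's: evaluate $\psi(qq'v)$ two ways to obtain $pq'v=p'qv$, then use an element of infinite order in $A$ to cancel and conclude $p/q=p'/q'$. The paper's argument is just a terser version of yours.
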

		
		\begin{proof}
			Since $D(A)$ contains a finite index subgroup of $A$, we may choose an element $v\in D(A)$
			of infinite order. For any defining pairs of integers $(p,q)$ and $(p',q')$,
			we have $\psi(qq'v)$ equals $pq'v$ and $p'qv$, so $p/q$ equals $p'/q'$.
			%Suppose that $\psi'$ is an extension of a partial dilatation $\psi:D(A)\to A$ with defining integers $p$, $q$.
			%As $D(A)$ contains a finite index subgroup $B$ of $A$, 
			%we may take the defining integers $[A:B]p$, $[A:B]q$ to see
			%that $\psi'$ is a partial dilatation with the same ratio.
		\end{proof}
				
		\begin{definition}
			We define the \emph{dilatation rate} of a partial dilatation $\psi$ of $A$ to be the nonzero rational
			value 
				$$\lambda(\psi)=p/q.$$			
		\end{definition}
		
		Recall that a right \emph{partial action} of group $G$ on set $X$ 
		is a collection of pairs $(\psi_g,D_g)$ for all $g\in G$, 
		where $D_g$ is a subset of $X$ and $\psi_g:D_g\to D_{g^{-1}}$ is a bijection,
		satisfying that $(\psi_{\id},\,D_{\id})$ is $(\id_X,\,X)$ and that
		$D_{gh}$ contains $D_g\cap\psi_{g^{-1}}(D_h)$ and
		$\psi_{gh}$ is an extension of the composed action $\psi_g$ followed by $\psi_h$,
		for all $g,h\in G$.
		
		Suppose that $G$ is a group that partially acts on $A$ by partial dilatations $(\psi_g,D_g)$.
		Then it is obvious that the dilatation rate function
			$$\lambda:\,G\to\,\Rational^\times$$
		which assigns $\lambda(\psi_g)$ to any element $g\in G$ is a group homomorphism.		
		
	\subsection{The partial action by partial dilatations}
		Continue to adopt the notations introduced at the beginning of this section.
				
		\begin{lemma}\label{recurrence}
			The system $(\psi_\alpha,D_{\alpha}(A_c))$ 
			for all $\alpha\in\pi_1(\pfib(S),*)$ defines a right partial action
			of $\pi_1(\pfib(S),*)$ on $A_c$.			
		\end{lemma}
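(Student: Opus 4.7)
The plan is to verify the three defining axioms of a right partial action directly from the path-lifting properties of the covering $\tilde M_S\to M$, treating $\psi_\alpha$ as the endpoint map of the unique lift of a based loop representing $[\alpha]$.

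First I would record well-definedness. Viewing each $a\in A_c$ as a preimage of $x_0$ in $\tilde M_S$, the unique lift of $\alpha$ starting at $a$ ends at some preimage of $x_0$ that we call $\psi_\alpha(a)$, and the endpoint depends only on the based-homotopy class $[\alpha]\in\pi_1(\pfib(S),*)$ by the standard homotopy-lifting property. This produces a well-defined setwise map $\psi_\alpha:A_c\to\{\text{preimages of }x_0\text{ in }\tilde M_S\}$ for every element of $\pi_1(\pfib(S),*)$. The identity axiom is then immediate: the constant loop lifts to the constant path at each $a$, so $\psi_{\id}=\id_{A_c}$ and $D_{\id}(A_c)=A_c$.

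For the bijection axiom, I would exploit the concatenation-of-lifts identity applied to the nullhomotopic loop $\alpha\cdot\alpha^{-1}$: the lift starting at any $a\in A_c$ is the lift of $\alpha$ from $a$ concatenated with its reverse, so $\psi_{\alpha^{-1}}\circ\psi_\alpha=\id$ on all of $A_c$, and similarly $\psi_\alpha\circ\psi_{\alpha^{-1}}=\id$ on all lifts of $x_0$ in its image. Consequently, if $a\in D_\alpha(A_c)$ then $\psi_\alpha(a)\in A_c$ satisfies $\psi_{\alpha^{-1}}(\psi_\alpha(a))=a\in A_c$, so $\psi_\alpha(a)\in D_{\alpha^{-1}}(A_c)$. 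Hence $\psi_\alpha$ restricts to a bijection $D_\alpha(A_c)\to D_{\alpha^{-1}}(A_c)$ with inverse $\psi_{\alpha^{-1}}$, as required.

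For the composition axiom, the same concatenation principle gives $\psi_{\alpha\beta}(a)=\psi_\beta(\psi_\alpha(a))$ wherever both sides make sense: the lift of $\alpha\cdot\beta$ starting at $a$ is the lift of $\alpha$ from $a$ followed by the lift of $\beta$ from $\psi_\alpha(a)$. In particular, if $a\in D_\alpha(A_c)$ and $\psi_\alpha(a)\in D_\beta(A_c)$, then $\psi_\beta(\psi_\alpha(a))\in A_c$, so $a\in D_{\alpha\beta}(A_c)$ and $\psi_{\alpha\beta}$ extends the composed partial transformation $\psi_\beta\circ\psi_\alpha|_{D_\alpha\cap\psi_\alpha^{-1}(D_\beta)}$, matching the right-action convention of the excerpt.

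I do not foresee a genuine obstacle here; the lemma is essentially a tautological statement about the monodromy action of $\pi_1$ on a fiber of a covering, restricted to the chosen subset $A_c\subset\pi^{-1}(x_0)$. The only care required is bookkeeping: ensuring the right-action convention (concatenation $\alpha\cdot\beta$ read left-to-right under lifting corresponds to applying $\psi_\alpha$ first and $\psi_\beta$ second), and keeping the identifications $A_c\subset\tilde T\subset\tilde M_S$ consistent throughout. Note that the deeper partial-dilatation assertion of Proposition \ref{psiAlpha}, together with the recurrence statement that $D_\alpha(A_c)$ contains a finite-index subgroup of $A_c$, is logically separate and is not required for this lemma.
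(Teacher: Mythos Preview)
Your proposal is correct and follows essentially the same approach as the paper: both verify the partial-action axioms directly from uniqueness of path lifting in the cover $\tilde M_S\to M$, checking the identity, bijection ($\psi_\alpha:D_\alpha(A_c)\to D_{\alpha^{-1}}(A_c)$), and composition axioms via concatenation of lifts. Your write-up is simply more explicit than the paper's brief version, and correctly flags that the finite-index and partial-dilatation claims are separate from this lemma.
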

		
		\begin{proof}
			For any closed path $\alpha:[0,1]\to \pfib(S)$ based at $*$,
			recall that the domain $D_{\alpha}(A_c)$ consists of all the lifts of $x_0$ in $A_c$
			based at which the lift of $\alpha$ arrives into $A_c$. By uniqueness of lifting,
			we have that $\psi_\alpha:D_{\alpha}(A_c)\to D_{\bar{\alpha}}(A_c)$ is bijective,
			where $\bar{\alpha}$ is the reversal of $\alpha$. 
			Clearly the trivial loop induces the identical transformation on $A_c$.
			Composition of partial transformations
			$\psi_\alpha$, whenever possible, is induced by lifting of concatenations of loops.
			This shows that $\pi_1(\pfib(S),*)$ partially acts on $A_c$ from the right.
		\end{proof}
		
		In order to see that $\pi_1(\pfib(S),*)$ partially acts on $A_c$ by partial dilatations,
		we factorize any partial transformation
		$\psi_\alpha$ into the composition of a sequence of partial maps
		as follows.
		%
				%
				%
		%\begin{lemma}\label{recurrence}
			%For any closed path $\alpha:[0,1]\to \pfib(S)$ based at $*$, and for any point
			%$\tilde{x}\in A_c$, denote by
			%$\tilde{\alpha}_{\tilde{x}}:[0,1]\to \tilde{M}_S$ the lift of $\alpha$ based
			%at $\tilde{x}$. Then $\tilde{\alpha}_{\tilde{x}}(1)$ lies in $A_c$.			
		%\end{lemma}
		%
		%\begin{proof}
			%Let $\Phi_0$ be the component of $\pfib(S)$ containing $*$.
			%Consider the minimal union $W$ of the elevations of JSJ pieces in $\tilde{M}_S$,
			%so that $W$ contains the lift of $\Phi_0$ based at $\tilde{x}_0$. 
			%As the JSJ subsurfaces of $\Phi_0$ are all virtual fibers of the carrier JSJ pieces
			%of $M$, it is easy to see
			%that the elevated JSJ pieces $W$ are homeomorphic a line bundle over $S_v$
			%corresponding to the JSJ subsurfaces $S_v$ of $\Phi_0$,
			%and the elevated JSJ tori of $W$ are infinite cylinders corresponding to the JSJ curves
			%in $\Phi_0$. This implies that 
			%$\tilde{\alpha}_{\tilde{x}}(1)\in A_c$ for any $\tilde{x}\in A_c$.
		%\end{proof}
		%
		%It follows from Lemma \ref{recurrence} that
		%the \emph{first recurrence map} induced	by $\alpha$
			%$$\psi_\alpha:A_c\to A_c,$$
		%which takes any $\tilde{x}$ to $\tilde{\alpha}_{\tilde{x}}(1)$,
		%is well defined. Furthermore,
		%the map depends only on the homotopy class of $\alpha$ in $\pi_1(\pfib(S),*)$,
		%and composition of two first recurrence maps $\psi_\beta$, $\psi_\alpha$ is 
		%the first recurrence map $\psi_{\alpha\beta}$. The first recurrence
		%is indeed a right action of $\pi_1(\pfib(S),*)$ on the set $A_c$.  
				%

		Possibly after a homotopy relative to the base point,
		we may assume that the lifted closed path $\tilde\alpha:[0,1]\to \tilde{M}_S$ 
		of $\alpha$ based at $\tilde{x}_0$ intersects the JSJ curves 
		of $\pfib(S)$ consecutively in 
			$$y_0,\cdots,y_n,$$ 
		with $y_0$ and $y_n$ equal to $\tilde{x}_0$.
		Let 
			$$\alpha_1,\cdots,\alpha_n$$
		be the subpaths $\alpha$ so that
		the corresponding subpaths $\tilde{\alpha}_i$ of $\tilde{\alpha}$
		are from $y_{i-1}$ to $y_i$, respectively.
		Note that all the $y_i$ lie on the lift of $S$ at $\tilde{x}_0$. 
		Since each $y_i$ lies on a unique lifted JSJ curve $c_i$, 
		we denote by $\tilde{T}_i$ the JSJ cylinder
		of $\tilde{M}_S$ corresponding to $c_i$, not necessarily mutually distinct.
		Let $A_i$ denote the deck transformation group of $\tilde{T}_i$ which covers
		the underlying JSJ torus of $M$. We may identify each $A_i$ as the orbit
		of $y_{i-1}$, so that $a\in A_i$ corresponds to $a.y_i$.
		For each $\alpha_i$, we obtain an induced map 
			$$\psi_i:D_i(A_{i-1})\to A_i.$$ 
		The domain $D_i(A_{i-1})$ is the largest subset of $A_{i-1}$
		such that the lift of $\alpha_i$ based at any point $y\in D_i(A_{i-1})$ 
		has its terminal endpoint contained in $A_{i-1}$,
		and terminal endpoint is defined to be $\psi_i(y)$. 
		To facilitate the notation, we regard $(\psi_i,D_i(A_{i-1}))$ as a partial map of $A_{i-1}$ to $A_i$,
		denoted as
			$$\psi_i:A_{i-1}\dashrightarrow A_i,$$
		with the domain implicitly assumed. The composition of two partial maps
		is the partial map defined
		on the largest domain so that the composition is admissible in the obvious sense.
		With these notations, the maps $\psi_i$ give rise to a restricted factorization of $\psi_\alpha$:
		$$A_0\stackrel{\psi_1}{\dashrightarrow}A_1\stackrel{\psi_2}{\dashrightarrow}\cdots
		\stackrel{\psi_n}{\dashrightarrow} A_n,$$
		where $A_0$ and $A_n$ are both $A_c$, in other words,
		$\psi_\alpha$	is an extension of 
		the composition of $\psi_i$ with possibly larger domain.

		It would be convenient to extend the definition of partial dilatations to partial maps
		between different groups. We say that a partial map
			$$\psi:A\dashrightarrow A'$$
		between two finitely generated rank-$1$ abelian groups is a \emph{partial dilatation}
		if there exist infinite-order elements $v\in A$ and $v'\in A'$, so that $D(A)$ contains $\Integral v$, and
			$$\psi(mv)=mv'.$$
		holds for all $m\in\Integral$.
		Note that the image of $\psi$ contains $\Integral v'$,
		so the composition of two partial dilatations is still a partial dilatation.
		When $A$ equals $A'$, it is easy to check that the new definition 
		agrees with the definition before.
		
%
		%Factorize any first recurrence map $\psi_\alpha:A_c\to A_c$ as a sequence of maps
		%as follows. Possibly after a homotopy relative to the base point,
		%we may assume that the lifted closed path $\tilde\alpha:[0,1]\to \tilde{M}_S$ 
		%of $\alpha$ based at $\tilde{x}_0$ intersects the JSJ curves 
		%of $\pfib(S)$ consecutively in 
			%$$y_0,\cdots,y_n,$$ 
		%with $y_0$ and $y_n$ equal to $\tilde{x}_0$.
		%Let 
			%$$\alpha_1,\cdots,\alpha_n$$
		%be the subpaths $\alpha$ so that
		%the corresponding subpaths $\tilde{\alpha}_i$ of $\tilde{\alpha}$
		%are from $y_{i-1}$ to $y_i$, respectively.
		%Note that all the $y_i$ lie on the lift of $S$ at $\tilde{x}_0$. 
		%Since each $y_i$ lies on a unique lifted JSJ curve $c_i$, 
		%we denote by $\tilde{T}_i$ the JSJ cylinder
		%of $\tilde{M}_S$ corresponding to $c_i$, not necessarily mutually distinct.
		%Let $A_i$ denotes the deck transformation group of $\tilde{T}_i$ which covers
		%the underlying JSJ torus of $M$. We may identify each $A_i$ as the orbit
		%of $y_{i-1}$, so that $a\in A_i$ corresponds to $a.y_i$.
		%For each $\alpha_i$, we obtain an induced map $\psi_i:A_{i-1}\to A_i$ 
		%that takes any point $y\in A_{i-1}$ to the terminal endpoint of the lift of $\alpha_i$
		%based at $y$. These maps $\psi_i$ give rise to a factorization of $\psi_\alpha$:
		%$$A_0\stackrel{\psi_1}{\longrightarrow}A_1\stackrel{\psi_2}{\longrightarrow}\cdots
		%\stackrel{\psi_n}{\longrightarrow} A_n,$$
		%where $A_0$ and $A_n$ are both $A_c$.
		%
		\begin{lemma}\label{eachPseudoDilatation}
			Each partial map $\psi_i:A_{i-1}\dashrightarrow A_i$ is a partial dilatation.
		\end{lemma}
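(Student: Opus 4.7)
The plan is to algebraically identify $\psi_i$ with the map induced by conjugation by $\alpha_i$ in $\pi_1(J_i)$, where $J_i$ denotes the JSJ piece containing $\alpha_i$, and then to construct the partial dilatation from the canonical suspension flow on $J_i$ provided by the virtual fiber structure of the JSJ subsurface $\Sigma_i\subset\pfib(S)$ containing $\alpha_i$. With basepoints chosen, $A_{i-1}$ is naturally identified with $\pi_1(T_{i-1})/H_{i-1}$ where $H_{i-1}=\pi_1(T_{i-1})\cap\pi_1(S,*)$ is cyclic generated by (a power of) $[c_{i-1}]$, and $\psi_i$ assigns to the class of $g\in\pi_1(T_{i-1})$ the class of $\alpha_i^{-1}g\alpha_i$ modulo $\pi_1(S,*)$, whenever this conjugate lies in $\pi_1(T_i)\cdot\pi_1(S,*)$. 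Thus it suffices to exhibit infinite-order classes $[g]\in A_{i-1}$ and $[g']\in A_i$ such that $\alpha_i^{-1}g^m\alpha_i\equiv g'^m$ modulo $\pi_1(S,*)$ for every $m\in\Integral$.

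To produce such $g,g'$, I would use the virtual fiber structure. Pass to a finite cover $\hat{J}_i\to J_i$ on which a lift $\hat{\Sigma}_i$ of $\Sigma_i$ is an honest fiber of a bundle over a $1$-orbifold, then to a further cover on which the monodromy $\varphi$ fixes each boundary component of $\hat{\Sigma}_i$ pointwise. Each boundary torus of $\hat{J}_i$ then carries a canonical suspension loop in the fiber direction; call them $\tau_{i-1}\in\pi_1(\hat{T}_{i-1})$ and $\tau_i\in\pi_1(\hat{T}_i)$, based at the endpoints of a lift $\hat{\alpha}_i\subset\hat{\Sigma}_i$ of $\alpha_i$. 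Sweeping $\hat{\alpha}_i$ along the fiber direction yields a singular strip in $\hat{J}_i$ whose boundary word $\hat{\alpha}_i\cdot\tau_i\cdot\varphi(\hat{\alpha}_i)^{-1}\cdot\tau_{i-1}^{-1}$ is null-homotopic, giving the relation $\hat{\alpha}_i^{-1}\tau_{i-1}\hat{\alpha}_i=\tau_i\cdot\eta$ in $\pi_1(\hat{J}_i)$ with $\eta\in\pi_1(\hat{\Sigma}_i)$. Setting $g,g'$ to be the projections of $\tau_{i-1},\tau_i$ into $\pi_1(J_i)$ yields $\alpha_i^{-1}g\alpha_i=g'\cdot h$ with $h\in\pi_1(\Sigma_i)\subset\pi_1(S,*)$.

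To upgrade this to the iterated identity $\alpha_i^{-1}g^m\alpha_i\equiv g'^m$ modulo $\pi_1(S,*)$, I use that $\tau$ normalizes $\pi_1(\hat{\Sigma}_i)$ inside the semidirect product $\pi_1(\hat{J}_i)=\pi_1(\hat{\Sigma}_i)\rtimes\langle\tau\rangle$; by induction on $m$ one checks that $\hat{\alpha}_i^{-1}\tau_{i-1}^m\hat{\alpha}_i=\tau_i^m\cdot\eta_m$ with $\eta_m\in\pi_1(\hat{\Sigma}_i)$, which projects to the desired congruence. The classes $[g]\in A_{i-1}$ and $[g']\in A_i$ have infinite order because the suspension loop and the boundary curve of the fiber are linearly independent on each boundary torus of $\hat{J}_i$, an independence that persists modulo torsion after projection. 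The hardest case will be the pseudo-Anosov one, where existence of the fibered finite cover relies on the virtual fiberedness of geometrically infinite subsurfaces in hyperbolic JSJ pieces built into the definition of $\pfib(S)$; the elementary case reduces by passing to the $T^2\times I$ double cover of the orientable thickened Klein bottle, where the setup becomes a trivial product.
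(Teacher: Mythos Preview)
Your argument is correct and rests on the same idea as the paper's: the virtual-fiber structure on the JSJ subsurface $S_v$ containing $\alpha_i$ supplies a ``suspension direction'' that translates both $A_{i-1}$ and $A_i$ simultaneously. The paper's execution, however, is more geometric and sidesteps your algebraic bookkeeping. Rather than passing to a finite fibered cover $\hat{J}_i$ and manipulating conjugation relations in $\pi_1$, it works directly in the induced piece $\tilde{J}_v\subset\tilde{M}_S$: since $S_v$ is a virtual fiber, $\tilde{J}_v$ is a virtually infinite cyclic cover of $J_v$ and hence carries an infinite-order deck transformation $\tau$ (taken, after a power, to preserve each boundary component). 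This $\tau$ acts on the cylinders $\tilde{T}_{i-1}$ and $\tilde{T}_i$ by translations $a_{i-1}\in A_{i-1}$ and $a_i\in A_i$, and because deck transformations commute with path-lifting one gets $\psi_i(ma_{i-1})=ma_i$ for all $m$ immediately, with no induction on $m$ required. Your suspension loops $\tau_{i-1},\tau_i$ are precisely the $\pi_1$-avatars of this $\tau$, so the two proofs coincide in content; the deck-transformation viewpoint just buys a one-line conclusion and cleaner handling of basepoints. The paper also treats nonorientable $S_v$ separately via the orientable double cover and an observation that the lifted deck transformation commutes with the covering involution on the boundary, whereas you absorb this into the passage to $\hat{J}_i$; either route is fine.
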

		
		\begin{proof}
			Let $S_v$ be the JSJ subsurface of $\pfib(S)$ containing the subpath $\alpha_i$.
			
			First assume that $S_v$ is orientable. 
			Denote by $\tilde{J}_v$ the induced JSJ piece of $\tilde{M}_S$
			which intersects the based lift of $S$ in a lift of $S_v$.
			As $S_v$ is a virtual fiber of the carrier JSJ piece $J_v$ of $M$,
			$\tilde{J}_v$ is a virtually infinite cyclic cover of $J_v$.
			Thus there exists a deck transformation
			$\tau:\tilde{J}_v\to \tilde{J}_v$ of infinite order. 
			Moreover, possibly after replacing $\tau$
			with a nontrivial power, we may assume $\tau$ to preserve each boundary component
			of $\tilde{J}_v$.
			In particular,
			$\tau$ acts on the JSJ cylinders $\tilde{T}_{i-1}$ and $\tilde{T}_i$
			by infinite-order deck transformations, which are affine translations 
			by adding $a_{i-1}\in A_{i-1}$ and $a_i\in A_i$ respectively.
			In other words, the domain $D_i(A_{i-1})$ of $\psi_i$ contains $\Integral a_{i-1}$, and
				$$\psi_i(ma_{i-1})=ma_i$$
			for all $m\in\Integral$.
			%
			%For any $u\in D_i(A_{i-1})$, let $\tilde{\alpha}_u$ be the lift of $\alpha$ in $J_v$
			%based at $u.y_{i-1}$. The terminal endpoint of $\tilde{\alpha}_u$ is the point $\psi_i(u).y_i$,
			%which lies in $A_i$ by the definition of $D_i(A_{i-1})$. 
			%Then $\tau.\tilde{\alpha}_u$ is the elevation of $\alpha$
			%based at $(u+a_{i-1}).y_{i-1}$ with the terminal endpoint $(\psi(u)+a_i).y_i$. 
			%This means $\psi_i(u+a_{i-1})=\psi_i(u)+a_i$ for all $u\in A_{i-1}$.
			%Note that $\psi_i:D_i(A_{i-1})\to A_i$ is certainly injective.
			Therefore, $\psi_i$ is a partial dilatation.	
			
			If $S_v$ is not orientable, we may take the characteristic orientable double cover $S^*_v$,
			namely, the double cover corresponding to
			the kernel of the orientation character $\omega:\pi_1(S_v)\to\{\pm1\}$.
			The pullback cover $\tilde{J}^*_v$
			of $\tilde{J}_v$ is equipped with a free involution $\nu$.
			As before, we may find an infinite-order deck transformation 
			$\tau^*$ for $\tilde{J}^*_v$ componentwise preserving the boundary. 
			Observe that each component of $\partial \tilde{J}_v$ has two disjoint
			lifts in $\partial \tilde{J}^*_v$ 
			that are exchanged under $\nu$, so the action of $\tau^*$ commutes with $\nu$ 
			on $\partial\tilde{J}^*_v$. Thus $\tau^*$ descends to become an affine translation
			$\tau$ acting on $\partial\tilde{J}_v$, and in particular, on $\tilde{T}_{i-1}$ and
			$\tilde{T}_i$. We may proceed to complete the proof by the same argument as
			of the orientable case.
		\end{proof}
		
		By Lemma \ref{eachPseudoDilatation}, the composition of the partial dilatations $\psi_1,\cdots,\psi_n$
		is also a partial dilatation. Since $\psi_\alpha$ is an extension of the composition,
		$\psi_\alpha$ is a partial dilatation as well.		
		This completes the proof of Proposition \ref{psiAlpha}.

\section{The spirality character of the almost fiber part}\label{Sec-theSpiralityCharacter}
	
	In this section, we take a constructive approach and
	recapture the spriality of the almost fiber part 
	via a canonically associated $\Rational^\times$-principal bundle. 
	Here $\Rational^\times$ denotes the multiplicative group
	of nonzero rational numbers endowed with the discrete topology.
	The holonomy of a $\Rational^\times$-principal bundle 
	induces a so-called spirality character in the first cohomology
	of the base space in $\Rational^\times$ coefficient.
	See Subsection \ref{Subsec-theSpiralityCharacter} for details.
	The construction is based on a flexible auxiliary choice of fibered covers
	of the JSJ pieces which carry JSJ subsurfaces of the almost fiber part,
	and the setting will be convenient for calculation.
		
	Let $M$ be an orientable closed $3$-manifold,
	and $S$ be a closed essentially immersed subsurface of $M$.

	\begin{proposition}\label{bundleH}
		There exists a principal $\Rational^\times$-bundle over the almost fiber part:
				$$\mathscr{H}\to\pfib(S)$$
		naturally induced by the immersion up to isomorphism 
		of principal $\Rational^\times$-bundles, whose spirality character (Definition \ref{spiralityCharacterDefinition})
		satisfies
				$$\langle s(\mathscr{H}),\,[\alpha]\rangle\,=\,s(\alpha),$$
		for any closed path $\alpha$ in $\pfib(S)$ with a basepoint on a JSJ curve.
		Moreover, the sign reduction of the spirality character of $\mathscr{H}$ 
		is the orientation character of $\pfib(S)$.
	\end{proposition}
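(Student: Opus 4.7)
The plan is to package the partial dilatation data from Proposition \ref{psiAlpha} into a cohomology class, and to take $\mathscr{H}$ as the corresponding principal $\Rational^\times$-bundle. Fixing a basepoint $*$ on a JSJ curve in $\pfib(S)$, the multiplicativity of dilatation rates recorded at the end of Subsection \ref{Subsec-partialDilatations}, together with Proposition \ref{psiAlpha}, supplies a group homomorphism
\[
	s_*\,:\,\pi_1(\pfib(S),\,*)\,\longrightarrow\,\Rational^\times,\qquad
	\alpha\,\longmapsto\,\lambda(\psi_\alpha).
\]
Since $\Rational^\times$ is abelian, $s_*$ descends through the abelianization $H_1(\pfib(S);\,\Integral)$; moreover, a change-of-basepoint argument using the conjugation identity $\lambda(\psi_{\gamma\alpha\gamma^{-1}})=\lambda(\psi_\alpha)$ shows that $s_*$ induces the same map on $H_1$ regardless of which JSJ curve carries $*$. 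One thereby obtains a canonical class $[s]\in\mathrm{Hom}(H_1(\pfib(S);\,\Integral),\,\Rational^\times)\cong H^1(\pfib(S);\,\Rational^\times)$.

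Because discrete principal $\Rational^\times$-bundles over $\pfib(S)$ are classified up to isomorphism by $H^1(\pfib(S);\,\Rational^\times)$, the class $[s]$ determines a principal $\Rational^\times$-bundle $\mathscr{H}\to \pfib(S)$, well-defined up to isomorphism and manifestly natural with respect to the given immersion. Interpreting $s(\mathscr{H})$ as the holonomy character, as defined in Subsection \ref{Subsec-theSpiralityCharacter}, the identity $\langle s(\mathscr{H}),\,[\alpha]\rangle = s(\alpha)$ for closed paths $\alpha$ based on a JSJ curve is then immediate from Definition \ref{spiralityOfLoops}.

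The remaining step is to verify that the sign reduction $\mathrm{sign}\circ s(\mathscr{H})$ coincides with the orientation character $\omega_{\pfib(S)}$. I would work through the factorization $\psi_\alpha = \psi_n\circ\cdots\circ \psi_1$ from the proof of Proposition \ref{psiAlpha}, which yields $\mathrm{sign}\,s(\alpha) = \prod_{i}\mathrm{sign}\,\lambda(\psi_i)$. Each factor $\psi_i$ is built from a boundary-preserving fiber translation $\tau$ on a fibered finite cover $\tilde J_v$ of the carrier JSJ piece; orienting $\tilde J_v$ by the ambient orientation of $M$ and orienting each adjacent JSJ cylinder $\tilde T_i$ by a transverse normal direction to the fiber lift of $S_v$ in $\tilde J_v$, the sign of $\lambda(\psi_i)$ records whether the translation generators $a_{i-1}\in A_{i-1}$ and $a_i\in A_i$ are coherently or oppositely oriented relative to this fiber lift. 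A telescoping argument then identifies the product of signs with whether $\alpha$ preserves a transverse normal vector field to $\pfib(S)$ in $\tilde M_S$, which equals $\omega_{\pfib(S)}(\alpha)$ since $M$ is orientable.

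The principal obstacle is exactly this sign bookkeeping, and most delicately in the case of a nonorientable JSJ subsurface $S_v$, where as in Lemma \ref{eachPseudoDilatation} the fiber translation is constructed on the characteristic orientable double cover $\tilde J_v^*$ and descended via the free involution $\nu$. The orientation conventions on the descent must be arranged so that the translation generator on each boundary component of $\tilde J_v$ continues to record a transverse normal direction to the nonorientable $S_v$, correctly pairing with the nontriviality of $\omega_{\pfib(S)}$ along the nonorientable part; once this convention is fixed consistently across the factorization, the identification of $\prod_{i}\mathrm{sign}\,\lambda(\psi_i)$ with $\omega_{\pfib(S)}(\alpha)$ becomes essentially formal.
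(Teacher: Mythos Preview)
Your approach is genuinely different from the paper's. The paper builds $\mathscr{H}$ \emph{constructively}: for each JSJ subsurface $S_v$ in $\pfib(S)$ it chooses a setup triple $(S'_v,\phi'_v,\{c'_\delta\})$, takes a trivialized $\Rational^\times$-bundle over each $S_v$, glues along JSJ curves by explicit rational ratios $s_\delta = h_{\bar\delta}/h_\delta$ with $h_\delta = [T'_\delta:T_\delta]/[c'_\delta:c_\delta]$, and then twists by the orientation double cover. It then has to \emph{verify} (Lemma \ref{lambdaPsiEye}) that this bundle's holonomy coincides with $\lambda(\psi_\alpha)$, and that the result is independent of the setup data. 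Your route goes the other way: you take $\lambda(\psi_\alpha)$ as the primary datum, pass to $H^1(\pfib(S);\Rational^\times)$, and invoke the classification of principal $\Rational^\times$-bundles. This is shorter for the bare existence statement, but the paper's construction buys Formula \ref{spiralityH}, a concrete recipe for computing $s$ from finite covering-degree data, and that formula is what drives Formula \ref{formulaRW}, the proof of the $(3)\Rightarrow(1)$ direction of Theorem \ref{main-aspiralityCriterion}, and the decidability algorithm in Section \ref{Sec-decisionProblems}. Your abstract bundle gives no such computational handle.

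There are also two soft spots in your argument as written. First, ``manifestly natural with respect to the given immersion'' is not sufficient: naturality here means that for a finite cover $\kappa:\tilde S\to S$ one has $\mathscr{H}_{\tilde S}\cong\kappa^*\mathscr{H}_S$, and in your framework this requires comparing the partial dilatations on $A_{\tilde c}\subset\tilde M_{\tilde S}$ with those on $A_c\subset\tilde M_S$, which are defined in different covering spaces of $M$; the compatibility is true but needs an argument (the paper handles it by choosing compatible setup data and invoking Formula \ref{spiralityH}). Second, your definition of $[s]$ only sees loops based on JSJ curves, so on a component of $\pfib(S)$ that is a single virtual-fiber JSJ subsurface with no interior JSJ curve you have said nothing---yet the sign-reduction clause forces $\mathscr{H}$ to have holonomy equal to the orientation character there, which is nontrivial if that subsurface is nonorientable. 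The paper's construction handles this automatically via the orientation twist $\mathscr{H}^+\times_{\pfib(S)}\tilde S$; you would need to patch your class by hand on such components. Your sign sketch for the remaining components is in the right spirit and close to what the paper does.
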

	
	Here the naturality simply means that for any finite covering map $S'\to S$,
	the pullback of $\mathscr{H}\to\pfib(S)$ 
	is isomorphic to $\mathscr{H}'\to\pfib(S')$.

	\begin{definition}\label{spiralityCharacter}
		The \emph{spirality character of the almost fiber part} of $S$ is defined to be 
			$$s(\mathscr{H})\,\in\,H^1(\pfib(S);\,\Rational^\times),$$
		namely, the spirality character
		of the associated principal $\Rational^\times$-bundle $\mathscr{H}$.
		For any integral $1$-cycle $\alpha$ of $\pfib(S)$,
		the \emph{spirality} of $\alpha$ is defined to be 
			$$s(\alpha)= \langle s(\mathscr{H}),\,[\alpha]\rangle,$$
		where $[\alpha]\in H_1(\pfib(S);\Integral)$ is the represented homology class.
	\end{definition}
	
	We prove Proposition \ref{bundleH} in the rest of this section.
	
	\subsection{The spirality character}\label{Subsec-theSpiralityCharacter}
	
	%\subsubsection{The spirality class of a principal $\Rational^\times$-bundle}
	\begin{definition}\label{spiralityCharacterDefinition}
	For a principal $\Rational^\times$-bundle $P$ over a CW	space $X$, 
	we define the \emph{spirality character} of $P$, denoted as
		$$s(P)\in H^1(X;\Rational^\times),$$
	by the holonomy of $P$.
	\end{definition}
	
	In other words, for any closed path $\alpha:[0,1]\to X$,
	after specifying a lift of the initial point, 
	there is a unique lift $\tilde{\alpha}:[0,1]\to P$ of $\alpha$.
	The ratio $\tilde{\alpha}(1)\,/\,\tilde{\alpha}(0)$ valued in $\Rational^\times$
	depends only on $[\alpha]\in H_1(X;\Integral)$ and induces 
	a homomorphism of abelian groups
		$$s(P):\,H_1(X;\Integral)\to\Rational^{\times}.$$
	We may alternatively regard $s(P)$ as an element of $H^1(X;\Rational^\times)$.
	
	The spirality character is a (multiplicatively valued) characteristic class of principal $\Rational^\times$-bundles in the
	sense that it is natural with respect to pullbacks. Under the sign homomorphism 
	$\mathrm{sgn}:\Rational^\times\to \{\pm1\}$,
	the induced homomorphism 
		$$\mathrm{sgn}:\,H^1(X;\,\Rational^\times)\to H^1(X;\,\{\pm1\})$$
	reduces the spirality character of $P$ to the orientation character of 
	the associated principal $\{\pm1\}$-bundle 
	$\mathrm{sgn}(P)=P\times_{\mathrm{sgn}}\{\pm1\}$:
	\begin{equation}
		\mathrm{sgn}(s(P))=\omega(\mathrm{sgn}(P)).
	\end{equation}
	Here $\omega$ denotes the orientation character, which is just
	the first Stiefel--Whitney class (of an associated real vector bundle),
	except with the additive coefficient group $\Integral/2$
	treated as the multiplicative group $\{\pm1\}$ instead.
	
	\begin{definition}
	We say that $P$ is \emph{aspiral} if $s(P)$ is torsion,
	or equivalently, only takes values $\pm1$ for all $[\alpha]\in H_1(X;\Integral)$.
	\end{definition}
	
	A general formula to calculate the spirality character will be given based on some 
	topological setup data 
	that we choose to construct of the associated principal bundle, (Formula \ref{spiralityH}).
	
	\subsection{The associated principal bundle $\mathscr{H}$}\label{Subsec-bundleH}
		
		\subsubsection{Setup}
		As $S$ has induced decomposition by the JSJ curves,
		denote by $\Lambda$ and $\pfib(\Lambda)$ the dual graphs of $S$ and $\pfib(S)$, respectively. 
		We make a choice of setup data as follows.
		
		For each vertex $v$ of $\pfib(\Lambda)$,
		choose a triple
			$$(S'_v,\,\phi'_v,\,\{c'_\delta\}_{\delta|v}).$$
		Here $S'_v$ is an orientable finite cover of the
		JSJ subsurface $S_v$
		corresponding to $v$,
		and $\phi'_v$ is an automorphism of $S'_v$ fixing
		$\partial S'_v$, and for each end-of-edge $\delta$ adjacent to $v$,
		$c'_\delta$ is an elevation of
		the boundary component $c_\delta$ of $S_v$ 
		corresponding to $\delta$.		
		Implicitly, we also require that the mapping torus
			$$J'_v\,=\,\frac{S'_v\times[0,1]}{(x,0)\sim(\phi'_v(x),1)}$$
		is equipped with a finite covering projection to the JSJ piece $J$ of $M$ containing $S_v$,
		taking $S'_v\times\{0\}$ to $S_v$.
		
		\subsubsection{The construction}
		To construct a principal $\Rational^\times$-bundle over $\pfib(S)$,
		we start by taking a trivialized principal $\Rational^\times$-bundle over $S_v$:
			$$\mathscr{H}^+_v\cong S_v\times\Rational^\times$$
		for each vertex $v\in\pfib(\Lambda)$. 
		To glue these vertex principal $\Rational^\times$-bundles 
		$\mathscr{H}^+_v$ along the JSJ curves, 
		identify the fibers over $c_{\delta}$ and $c_{\bar{\delta}}$ via
		a principal-bundle equivalence for any end-of-edge $\delta$ of $\pfib(\Lambda)$, 
		where $\bar{\delta}$ denotes
		the opposite end of the same edge.
		Precisely, the gluing is given by a homeomorphism 
			$$c_{\delta}\times\Rational^\times\to c_{\bar{\delta}}\times\Rational^\times$$
		which naturally identifies the first factors and 
		multiplies the second factor by a positive rational ratio $s_\delta$,	satisfying
		\begin{equation}
			s_{\bar{\delta}}=s^{-1}_{\delta}.
		\end{equation}
					
		The ratios $s_\delta$ are determined as follows.			
		For each boundary curve $c_\delta$ of each JSJ subsurface $S_v$,
		the chosen elevated curve $c'_\delta$ 
		lies in a boundary component $T'_\delta$ of $J'_v$.
		Denote by $J_v$ and $T_\delta$ the JSJ piece of $M$ and the boundary component
		covered by $J'_v$ and $T'_\delta$, respectively.
		We associate to $c_\delta$ a nonzero integer
		\begin{equation}
			h_\delta\,=\,[T'_\delta:T_\delta]\,/\,[c'_\delta:c_\delta],
		\end{equation}
		where $[-:-]$ denotes the covering degree.
		The ratio $s_\delta$ is then defined to be
		\begin{equation}
			s_\delta\,=\,{h_{\bar{\delta}}}\,/\,{h_\delta}.
		\end{equation}
				
		The construction above, or essentially the ratios $s_\delta$,
		yields a principal $\Rational^\times$-bundle over the almost fiber part $\pfib(S)$,
		which we denote as $\mathscr{H}^+\to\pfib(S).$
		To correct the orientation character, we further twist $\mathscr{H}^+$ by taking
		the fiber product
				$$\mathscr{H}^+\times_{\pfib(S)}\tilde{S},$$
		where $\tilde{S}\to S$ is the orientable double cover corresponding to the kernel of 
		$\omega(S)\in H^1(S;\,\{\pm1\})\cong\mathrm{Hom}(\pi_1(S),\,\{\pm1\})$,
		regarded as a principal $\{\pm1\}$-bundle over $\pfib(S)$.
		The result is the associated principal $\Rational^\times$-bundle
			$$\mathscr{H}\to\pfib(S).$$
		
		\subsubsection{Verifications}
		In the above setting, by Definition \ref{spiralityCharacterDefinition},
		the spirality character of $\mathscr{H}$ can be calculated 
		using the following formula:
		
		\begin{formula}\label{spiralityH}
			For any directed $1$-cycle $\alpha$ dual to a cycle of
			directed edges $e_1,\cdots,e_n$,
				$$|\langle s(\mathscr{H}),\,[\alpha]\rangle|\,=\,\prod_{i=1}^{n}\,s_{\mathtt{ter}(e_i)}
				\,=\,\prod_{i=1}^{n}\,s_{\mathtt{ini}(e_i)}^{-1}
				\,=\,\prod_{i=1}^{n}\,\frac{h_{\mathtt{ini}(e_i)}}{h_{\mathtt{ter}(e_i)}},$$
			and the sign of $\langle s(\mathscr{H}),\,[\alpha]\rangle$ is 
			the value of the orientation character of $\pfib(S)$ on $[\alpha]$.
		\end{formula}
		
		We must verify that for any closed path $\alpha$ with basepoint in a JSJ curve of $\pfib(S)$, 
		$\langle s(\mathscr{H}),\,[\alpha]\rangle$ equals the dilatation rate
		$\lambda(\psi_\alpha)$, (Definition \ref{spiralityOfLoops}).
		Once this is done, we can infer immediately that the isomorphism class of 
		the principal $\Rational^\times$-bundle $\mathscr{H}$ 
		does not depend on the choice of the setup triple
		$(S'_v,\phi'_v,\,\{c'_\delta\}_{\delta|v})$, and 
		that the dilatation rate	$\lambda(\psi_\alpha)$ 
		does not depend on the choice of  the basepoint 
		of $\alpha$ either.

		Denote by $c$ the JSJ curve of $\pfib(S)$
		containing the basepoint of $\alpha$,
		and by $\alpha_1,\cdots,\alpha_n$ the subsegments
		in which $\alpha$ intersects with the JSJ subsurfaces of $\pfib(S)$,
		in cyclic consecutive order.
		Denote by $e_i$ the edges corresponding to the transition points
		between the segments $\alpha_i$ and $\alpha_{i+1}$.
		By convention, we understand the index $i$ as a residue class modulo $n$.
		Adopting the notations in the proof of Proposition \ref{psiAlpha},
		we have a restricted factorization of 
		$\psi_\alpha:A_c\dashrightarrow A_c$ into a composition of partial dilatations
		$$A_0\stackrel{\psi_1}{\dashrightarrow}A_1\stackrel{\psi_2}{\dashrightarrow}\cdots
		\stackrel{\psi_n}{\dashrightarrow} A_n,$$
		where $A_0$ and $A_n$ are both $A_c$. 
		
		In general, a dilatation rate can be assigned to a partial dilatation $\psi:A\dashrightarrow A'$ up to a sign,
		and the ambiguity can be resolved by specifying an orientation of $A$ and $A'$.
		To this end, we say \textit{ad hoc} that a finitely generated rank-$1$ abelian group $A$ is
		\emph{oriented} if a generator of
		the free cyclic quotient $\bar{A}$ is specified.
		With this generator identified to be $1$, 
		we may identify the rational vector space $A\otimes\Rational$
		with $\Rational$.	Suppose that $A$ and $A'$ are both oriented.
		With any defining elements $v\in A$ and $v'\in A'$,
		$\psi$ induces a	homomorphism of rational vector spaces
			$$\bar{\psi}:A\otimes\Rational\to A'\otimes\Rational,$$
		defined by $\bar{\psi}(v\otimes1)=v'\otimes1$.
		Since we have identified both $A\otimes\Rational$ and $A'\otimes\Rational$ with $\Rational$,
		the homomorphism $\bar{\psi}$ is the multiplication by a nonzero rational scalar,
		denoted as 
			$$\lambda(\psi)\in\Rational^\times,$$
		and we define $\lambda(\psi)$ to be the \emph{dilatation rate} of $\psi$.		
		It is easy to see that
		$\lambda(\psi)$ depends only on $\psi$ and the orientation
		of $A$ and $A'$.
		Moreover, the dilatation rate 
		is multiplicative under composition of partial dilatations.
		
		We fix an auxiliary choice of orientation for each $A_i$. Then
		\begin{equation}\label{productDilatationRate}
			\lambda(\psi_\alpha)=\lambda(\psi_1)\times\cdots\times\lambda(\psi_n).
		\end{equation}

		\begin{lemma}\label{lambdaPsiEye}
			For any  setup triple $(S'_v,\phi'_v,\,\{c'_\delta\}_{\delta|v})$,
			$$|\lambda(\psi_i)|\,=\,
				\frac{h_{\mathtt{ini}(e_i)}}{h_{\mathtt{ter}(e_{i-1})}}.$$
		\end{lemma}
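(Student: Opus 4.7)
The plan is to compute the partial-dilatation rate $\lambda(\psi_i)$ directly from the mapping-torus model $J'_v$ provided by the setup triple, where $v$ is the vertex such that $\alpha_i$ lies in $S_v$.

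First, I will construct an explicit infinite-order, boundary-preserving deck transformation $\tau$ of $\tilde J_v \to J_v$ realizing the one promised by Lemma \ref{eachPseudoDilatation}. The mapping torus $J'_v = S'_v\times[0,1]/(x,0)\sim(\phi'_v(x),1)$ is a finite cover of $J_v$ that fibers over $S^1$ with fiber $S'_v$, and hence admits an infinite cyclic cover $\tilde J'_v \cong S'_v\times\Real$ whose generating deck transformation $T$ is induced by $t\mapsto t+1$ in the $\Real$-factor. Since $\pi_1(S'_v)$ is a finite-index subgroup of $\pi_1(S_v)$, the cover $\tilde J'_v$ dominates $\tilde J_v$; after replacing $T$ by a suitable power lying in the normalizer, it descends to an infinite-order deck transformation $\tau$ of $\tilde J_v\to J_v$, and boundary preservation is automatic because $\phi'_v$ is the identity on $\partial S'_v$. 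Replacing $T$ by a power simultaneously scales $a_{i-1}$ and $a_i$, so the ratio $|a_i|/|a_{i-1}|$ is unaffected.

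Next, I will compute the translation that $\tau$ induces on a JSJ cylinder $\tilde T_\delta$ covering a boundary torus $T_\delta$ of $J_v$. Fix a curve $m_\delta$ transverse to $c_\delta$ in $T_\delta$, so that $\pi_1(T_\delta) = \Integral[c_\delta]\oplus\Integral[m_\delta]$. Because $\phi'_v$ fixes $\partial S'_v$ pointwise, the elevated boundary torus $T'_\delta$ is a direct product $c'_\delta\times S^1$. Writing the image of $[\{\ast\}\times S^1]$ in $\pi_1(T_\delta)$ as $a[c_\delta]+b[m_\delta]$, the sublattice index computation gives
$$[T'_\delta:T_\delta] = [c'_\delta:c_\delta]\cdot|b|,$$
so $|b|=h_\delta$. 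Since $T$ acts on $c'_\delta\times\Real$ by $t\mapsto t+1$, it covers the class $[\{\ast\}\times S^1]$; projecting through the further quotient $\pi_1(T_\delta)/\pi_1(c_\delta)\cong\Integral[m_\delta]$, we conclude that $\tau$ translates $\tilde T_\delta$ by $\pm h_\delta$ times the generator of the deck group.

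Finally, specializing to $\delta=\mathtt{ter}(e_{i-1})$ and $\delta=\mathtt{ini}(e_i)$ gives $|a_{i-1}|=h_{\mathtt{ter}(e_{i-1})}$ and $|a_i|=h_{\mathtt{ini}(e_i)}$. Combined with the identity $\psi_i(m\, a_{i-1}) = m\, a_i$ established in the proof of Lemma \ref{eachPseudoDilatation}, this yields
$$|\lambda(\psi_i)| = |a_i|/|a_{i-1}| = h_{\mathtt{ini}(e_i)}/h_{\mathtt{ter}(e_{i-1})},$$
as claimed. The main subtlety is the descent of $T$ through the intermediate cover $\tilde J'_v \to \tilde J_v$, since $\pi_1(S_v)$ need not be normal in $\pi_1(J_v)$; this is handled by passing to a power of $T$ in the normalizer, which does not affect the ratio. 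The non-orientable case of $S_v$ is dispatched as in Lemma \ref{eachPseudoDilatation} by working with the characteristic orientable double cover.
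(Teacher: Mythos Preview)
Your argument is essentially the paper's, recast in slightly different language. The paper passes to the $d$-cyclic cover $J''_v$ of $J'_v$ dual to $S'_v$ and reads off the translation on each $\tilde T_\delta$ from the suspension-flow leaf $b''_\delta$, identifying $|b''_\delta\otimes1|$ with the covering-degree ratio $[T''_\delta:T_\delta]/[c''_\delta:c_\delta]=d\cdot h_\delta$; you instead work directly with the infinite cyclic cover $S'_v\times\Real$ and its generating deck transformation, computing the same translation via a sublattice-index formula. The descent step you flag as the ``main subtlety'' (passing to a power lying in the normalizer of $\pi_1(S_v)$) is exactly how the paper produces the $\tau$ of Lemma~\ref{eachPseudoDilatation}, and the non-orientable case is dispatched identically.

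One caveat worth noting: your splitting $\pi_1(T_\delta)=\Integral[c_\delta]\oplus\Integral[m_\delta]$ tacitly assumes that the image of $c_\delta$ is \emph{primitive} in $\pi_1(T_\delta)$, which need not hold since the JSJ curve is only immersed in the torus. If $[c_\delta]=n_\delta\cdot[c_\delta^0]$ with $[c_\delta^0]$ primitive, your index computation actually yields $|b|=h_\delta/n_\delta$, so the asserted ratio picks up an extra factor $n_{\mathtt{ter}(e_{i-1})}/n_{\mathtt{ini}(e_i)}$. Since $n_\delta$ depends only on the underlying edge and not on which end, these factors telescope in the cyclic product~\eqref{productDilatationRate}, so Formula~\ref{spiralityH} --- which is the sole use of the lemma --- is unaffected.
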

		
		\begin{proof}
			We first assume that $\pfib(S)$ is orientable. Let $d$ be a positive integer
			divisible by all the covering degrees $[c'_\delta:c_\delta]$. For the mapping torus
			$J'_v$ of $\phi'_v$, consider the $d$-cyclic cover $J''_v$ of $J'_v$ dual to $S'_v$.
			Since $\phi'_v$ is boundary fixing, a closed leaf $b'_\delta$ 
			of the suspension flow on $T'_\delta$ intersects $c'_\delta$ in only one point,
			and the elevation $b''_\delta$ covers $b'_\delta$ of degree $d$.
			Identify the infinite cyclic cover of $T_\delta$ with $\tilde{T}_\delta$,
			the corresponding boundary component of the induced JSJ piece $\tilde{J}_v$
			of $\tilde{M}_S$, then $b''_{\mathtt{ini}(e_i)}$ can be regarded as an infinite-order element
			of $A_i$, namely, $b''_{\mathtt{ini}(e_i)}.y_i$, where $y_i$ is 
			the intersection of $\alpha$ with the JSJ curve $c_i$. Similarly,
			$b''_{\mathtt{ter}(e_{i-1})}$ can be regarded as an element of $A_{i-1}$.
			By definition, $\lambda(\psi_i)$ can be computed using
				$$\bar{\psi}_i(b''_{\mathtt{ter}(e_{i-1})}\otimes1)\,=\,b''_{\mathtt{ini}(e_i)}\otimes1,$$
			where  $b''_{\mathtt{ter}(e_{i-1})}\otimes1$ and $b''_{\mathtt{ini}(e_i)}\otimes1$
			are rational numbers under identification.
			Observe that the elevation $T''_\delta$ of $T'_\delta$ covers $T_\delta$, 
			and the covering map factors through 
			a characteristic finite cover of $T_\delta$ in which $c'_\delta$ has a lift,
			by the divisibility of $d$.
			This implies
			$$\frac{[T''_{\mathtt{ter}(e_{i-1})}:T_{\mathtt{ter}(e_{i-1})}]}{[c''_{\mathtt{ter}(e_{i-1})}:c_{\mathtt{ter}(e_{i-1})}] }
			\,=\,|b''_{\mathtt{ter}(e_{i-1})}\otimes 1|,$$
			%the right-hand side valued in $A_{i-1}\otimes\Rational$ isomorphic to $\Rational$,
			and
			$$\frac{[T''_{\mathtt{ter}(e_{i-1})}:T_{\mathtt{ter}(e_{i-1})}]}{[c''_{\mathtt{ter}(e_{i-1})}:c_{\mathtt{ter}(e_{i-1})}]}
			\,=\,d\cdot\frac{[T'_{\mathtt{ter}(e_{i-1})}:T_{\mathtt{ter}(e_{i-1})}]}{[c'_{\mathtt{ter}(e_{i-1})}:c_{\mathtt{ter}(e_{i-1})}]}.$$
			The same computation works for the initial endpoint of $e_i$ as well.
			%and 
			%$[T''_{\mathtt{ini}(e_i)}:T_{\mathtt{ini}(e_i)}]\,/\,[c''_{\mathtt{ini}(e_i)}:c_{\mathtt{ini}(e_i)}]$ 
			%equals the value of $b''_{\mathtt{ini}(e_{i})}$ in $A_{i}\otimes\Rational$.
			We obtain
			$$|\lambda(\psi_i)|\,=\,
					\frac{[T''_{\mathtt{ini}(e_i)}:T_{\mathtt{ini}(e_i)}]\,/\,[c''_{\mathtt{ini}(e_i)}:c_{\mathtt{ini}(e_i)}]}
					{[T''_{\mathtt{ter}(e_{i-1})}:T_{\mathtt{ter}(e_{i-1})}]\,/\,[c''_{\mathtt{ter}(e_{i-1})}:c_{\mathtt{ter}(e_{i-1})}]} 
					\,=\,\frac{h_{\mathtt{ini}(e_i)}}{h_{\mathtt{ter}(e_{i-1})}}.$$
					
			If $\pfib(S)$ is not orientable, in order to compute $\lambda(\psi_i)$,
			we need to work with the orientable double cover $S^*_v$ for any non-orientable $S_v$,
			and return to the $S_v$ level by only restricting to the JSJ cylinders. However,
			the idea is similar to the proof of \ref{eachPseudoDilatation} and the argument is 
			completely similar to the orientable case, so we omit the details. 
		\end{proof}
		
		By Formula \ref{spiralityH}, Lemma \ref{lambdaPsiEye} and Equation \eqref{productDilatationRate}, it follows
		that $\langle s(\mathscr{H}),[\alpha]\rangle$ equals $\lambda(\psi_\alpha)$ in the absolute value.
		To check the sign, observe that there are an odd number of negative values among all $\lambda(\psi_i)$
		if and only if the orientation character $\omega(\pfib(S))$ is $-1$ on $\alpha$. Note that $\tilde{M}_S$ is orientable
		and the JSJ cylinders are all two-sided. 
		Thus the sign matches by the fact that $\mathrm{sgn}(s(\mathscr{H}))$ equals $\omega(\pfib(S))$. 
		In particular, the quantity
			$$\langle s(\mathscr{H}),[\alpha]\rangle\,=\,\lambda(\psi_\alpha)$$
		is independent of the choice of the defining triple of $\mathscr{H}$ or the loop representative of $[\alpha]$.
		Thus $\mathscr{H}\to\pfib(S)$ is canonically defined up to isotopy of $\pfib(S)$ and bundle equivalence.
		
		To see the naturality of $\mathscr{H}\to\pfib(S)$, consider a finite covering map $\kappa:\tilde{S}\to S$.
		The almost fiber part $\pfib(\tilde{S})$ is the preimage of $\pfib(S)$.
		Choose a collection of setup data $(S'_v,\,\phi'_v,\,\{c'_\delta\}_{\delta|v})$
		for the canonical principal bundle $\mathscr{H}_S\to\pfib(S)$, possibly after modification by passing to multiples of $\phi'_v$, 
		we may choose a collection of setup data 
		$(\tilde{S}'_{\tilde{v}},\,\tilde{\phi}'_{\tilde{v}},\,\{\tilde{c}'_{\tilde{\delta}}\}_{\tilde{\delta}|\tilde{v}})$
		commutative with the covering map to construct the canonical pricipal bundle $\mathscr{H}_{\tilde{S}}\to\pfib(\tilde{S})$.
		Then Formula \ref{spiralityH} implies that
			$$s(\mathscr{H}_{\tilde{S}})\,=\,\kappa^*s(\mathscr{H}_S),$$
		so $\mathscr{H}_{\tilde{S}}$ is isomorphic to $\kappa^*\mathscr{H}_S$.
		This  completes the proof of Proposition \ref{bundleH}.

\section{Separability of surface subgroups}\label{Sec-separability}
	In this section, we prove a weak version of the aspirality criterion.
	Namely, we show that aspirality of the almost fiber part implies 
	a virtually embedded elevation. Invoking Przytycki--Wise \cite{PW-embedded},
	one will infer $\pi_1$-separability and the virtual	embedding property
	from aspirality of the almost fiber part.

	\begin{proposition}\label{aspiralityImpliesVirtuallyEmbeddedElevation}
		Let $M$ be a closed orientable aspherical $3$-manifold,  and $S$ be 
		a closed essentially	immersed subsurface.
		If	the almost fiber part $\pfib(S)$ is aspiral, 
		then $S$ has an embedded elevation in some finite cover of $M$.
	\end{proposition}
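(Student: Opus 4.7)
The plan is to use the aspirality of $\pfib(S)$ to produce compatible finite covers of the JSJ pieces of $M$ in which all JSJ subsurfaces of $S$ embed and their boundary elevations glue coherently along lifts of the JSJ tori. First I would pass to the orientation double cover of $S$ (and a matching double cover of $M$), reducing to the case where $S$ is orientable; combined with aspirality and the sign reduction in Proposition \ref{bundleH}, this forces $s(\mathscr{H}) \equiv 1$ identically on $H_1(\pfib(S);\Integral)$.

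For each vertex $v$ of $\pfib(\Lambda)$ I would choose a setup triple $(S'_v,\phi'_v,\{c'_\delta\}_{\delta|v})$ as in Section \ref{Sec-theSpiralityCharacter}, realizing each virtual-fiber JSJ subsurface as an embedded fiber of a finite cover $J'_v$ of its carrying JSJ piece. Aspirality, via Formula \ref{spiralityH}, says exactly that $\prod h_{\mathtt{ini}(e_i)}/h_{\mathtt{ter}(e_i)} = 1$ around every cycle in $\pfib(\Lambda)$. This is precisely the cocycle condition that lets me replace each $\phi'_v$ by an appropriate power (and enlarge $J'_v$ accordingly) so that every $h_\delta$ becomes equal to $h_{\bar\delta}$; equivalently, $c'_\delta$ and $c'_{\bar\delta}$ cover their common image in $M$ at the same degree, so that the induced covers of each JSJ torus from the two adjacent almost-fiber sides coincide. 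For the remaining JSJ subsurfaces, namely the vertical ones in Seifert pieces and the geometrically finite ones in hyperbolic pieces, I would invoke the Przytycki--Wise constructions of partial PW subsurfaces (cf.~\cite{PW-mixed,PW-embedded,DLW}), which give embedded elevations in finite covers of the respective JSJ pieces with flexible control over the induced boundary lifts.

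It then remains to assemble these local finite covers into a single finite cover $\tilde M$ of $M$ via the graph-of-spaces structure, so that the lifts of the JSJ subsurfaces glue along matching boundary circles into an embedded elevation of $S$. The main obstacle is boundary compatibility across each JSJ torus $T$: the finite cover of $T$ induced from one side must agree with the one induced from the other, and this must hold simultaneously at every $T$. Aspirality delivers this compatibility on the almost-fiber side, and the flexibility of the Przytycki--Wise construction allows the opposite side to be refined to match. The technical heart of the argument is to verify that by replacing all data with sufficiently deep common finite covers of each JSJ torus, these constraints can be realized simultaneously; this uses separability of the peripheral subgroups of the JSJ pieces and residual finiteness of the relevant surface groups, together with the freedom to pass to further powers of the $\phi'_v$ without affecting the cocycle balance given by aspirality.
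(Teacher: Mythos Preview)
Your overall strategy aligns with the paper's: exploit aspirality to handle the almost fiber part, invoke Przytycki--Wise separability for the geometrically finite and vertical pieces, and then assemble. However, the proposal is vague precisely where the paper does the real work.

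Your treatment of the almost fiber part via the cocycle condition is essentially correct and parallels the paper's Lemma~\ref{aspiralPhiS}, though the paper argues differently: rather than balancing the $h_\delta$'s via Formula~\ref{spiralityH}, it uses the partial-dilatation description (Proposition~\ref{psiAlpha}) directly to locate a second lift of each component $F\subset\pfib(S)$ inside the $\pi_1(S)$-cover $\tilde M_S$, and then quotients by the deck transformation taking one lift to the other to produce a finite semicover $Q(\pfib(S))\to M$ that fibers over the circle with fiber $\pfib(S)$. Your claim that $h_\delta = h_{\bar\delta}$ forces the induced torus covers from the two sides to coincide is in fact true (both subgroups of $\pi_1(T)\cong\Integral^2$ contain the JSJ curve and have the same index, hence are equal), but it deserves justification rather than assertion.

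The genuine gap is in the final assembly. You write that it ``remains to assemble these local finite covers into a single finite cover $\tilde M$'' and that the ``technical heart'' is boundary compatibility, but you supply no mechanism. Having a cover $J'_v$ of each JSJ piece in which the relevant subsurface embeds does not yield a cover of $M$: the $J'_v$ may have different numbers of boundary components over a given JSJ torus, and even when degrees match, the global combinatorics of gluing need not close up. The paper resolves this in two concrete steps you omit. It first packages everything into a single immersed complex $Y_m(S)=S\cup_{\pfib^*(S)}Q_m(\pfib^*(S))$, the \emph{partial suspension}, which carries its own induced JSJ decomposition and maps to $M$ by a local embedding. It then proves $Y_m(S)$ is virtually embedded: a merging lemma (Lemma~\ref{mergeFiniteCovers}) produces a regular finite cover $M'$ in which every JSJ piece of $Y_m(S)$ elevates to be embedded, and then LERF of the free group $\pi_1(\Lambda(M'))$ is invoked to pass to a further cover, induced from a cover of the dual graph, in which the dual graph of $Y_m(S)$ embeds as a subgraph, forcing $Y_m(S)$ (and hence $S$) to embed. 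Neither the merging trick nor the free-group LERF step appears in your outline, and ``separability of the peripheral subgroups \ldots\ and residual finiteness'' alone will not substitute for them.
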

	
	The rest of this section is devoted to the proof of Proposition \ref{aspiralityImpliesVirtuallyEmbeddedElevation}.
	As the statement is about elevations of $S$, we may assume without loss of generality that $S$ is
	orientable.
	
	\subsection{Semicovers and merging}\label{Subsec-semicoverAndMerging}
		Finite covers of an orientable irreducible closed $3$-manifold 
		can be obtained by merging finite semicovers. 
		We briefly review some available techniques for our application.
		
		With respect to the JSJ decomposition of $M$,
		a semicover of $M$ is a $3$-manifold $N$ together with a closed immersion $N\to M$
		of which the restriction to any component of $\partial N$ covers a JSJ torus $M$,
		\cite[Definition 3.3]{PW-embedded}.  A \emph{finite} semicover is a semicover
		where $N$ is compact.

		The following lemma allows us to \emph{merge} atomic finite semicovers into a finite cover of a $3$-manifold. 
		More effecient operations such as
		\cite[Proposition 3.4]{PW-embedded} can be derived from this basic case.
		Given a positive integer $m$,
		we say that a finite semicover $N\to M$ is \emph{JSJ $m$-characteristic},
		if every elevation $\tilde{T}$ in $N$ of a JSJ torus $T$ in $M$
		is an $m$-characteristic cover of $T$,
    namely, such that every slope of $\tilde{T}$
    covers a slope of $T$ with degree $m$.

		\begin{lemma}[{Cf.~\cite[Proposition 4.2]{DLW}}]\label{mergeFiniteCovers}
            Let $M$ be an orientable irreducible closed $3$-manifold.
            Suppose $J'_1,\cdots,J'_s$
            are finite covers of all the JSJ pieces $J_1,\cdots,J_s$
            of $M$, respectively. Then there is a positive integer $m_0$, satisfying
            the following. For any positive integral multiple $m$ of $m_0$, there is
            a regular %(characteristic) 
            finite cover $\tilde{M}$ of $M$, which is JSJ $m$-characteristic,
            such that any elevation $\tilde{J}_i$ of a JSJ piece $J_i$
            is a cover of $J_i$ that factors through $J'_i$.
		\end{lemma}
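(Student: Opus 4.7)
The plan is to produce the desired cover by first choosing a uniform ``interface'' cover of each JSJ torus, then upgrading each $J'_i$ to a cover of $J_i$ whose boundary tori realize this interface, and finally gluing everything together and taking a regular closure.

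First I would set up the characteristic interfaces on JSJ tori. For each JSJ torus $T$ of $M$ and each adjacent piece $J_i$, the given cover $J'_i\to J_i$ induces finitely many elevations of $T$ in $\partial J'_i$, each a finite abelian cover of $T$. Let $d_T$ denote the least common multiple of the indices of all such elevations (over all pieces adjacent to $T$) in $\pi_1(T)\cong\Integral^2$, and let $m_0$ be the least common multiple of the $d_T$ as $T$ ranges over all JSJ tori. Then for any multiple $m$ of $m_0$, the $m$-characteristic sublattice $m\Integral^2\subset\pi_1(T)$ is contained in the sublattice defining each such elevation, so the $m$-characteristic cover $\tilde{T}_m$ of $T$ canonically factors through every elevation of $T$ in any $J'_i$.

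Next I would upgrade each $J'_i$ to a cover of $J_i$ with uniform boundary behavior. For each piece, I would produce a regular finite cover $\tilde{J}^{(0)}_i\to J_i$ factoring through $J'_i$ and satisfying the property that every boundary component is precisely a copy of $\tilde{T}_m$ over the appropriate JSJ torus $T$. Existence of such a cover is the main technical point: in the Seifert fibered case it follows directly from the structure of $\pi_1(J_i)$ as a central extension of a Fuchsian group, while in the hyperbolic case it follows from residual finiteness together with the fact that peripheral subgroups are separable; in both cases one intersects the defining subgroup of $J'_i$ with the kernel of an appropriate homomorphism to a finite group through which each peripheral subgroup factors to kill the $m$-characteristic sublattice. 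This step, rather than the gluing, is where the real work lives.

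Once each $\tilde{J}^{(0)}_i$ has all boundary tori equal to copies of $\tilde{T}_m$ over the appropriate $T$, I would equalize counts on the two sides of each JSJ torus. Replacing each $\tilde{J}^{(0)}_i$ by a disjoint union of sufficiently many copies, I can arrange that for every JSJ torus $T$ the number of elevations of $T$ in the two adjacent (disjoint unions of) covers agrees; by a matching of the respective copies of $\tilde{T}_m$ this yields a finite semicover of $M$ (in the sense of Subsection \ref{Subsec-semicoverAndMerging}) that is JSJ $m$-characteristic and whose pieces cover the $J_i$ through the given $J'_i$.

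Finally I would close up to a regular cover. Any finite semicover of $M$ extends to a finite cover by the standard merging technique: if $N\to M$ is a finite semicover, then the image of $\pi_1(N)$ inside $\pi_1(M)$ has finite index, and its normal core yields a regular finite cover $\tilde{M}\to M$ into which $N$ embeds as a semicover. Applying this to the semicover constructed above and passing, if necessary, to a further regular cover that dominates it, one obtains a regular finite cover $\tilde{M}\to M$ whose JSJ pieces are covers of the $J_i$ factoring through $\tilde{J}^{(0)}_i$ and hence through $J'_i$, and whose JSJ tori are all $m$-characteristic; this is the asserted $\tilde{M}$.
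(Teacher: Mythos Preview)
The paper does not give its own proof of this lemma; it is stated with a citation to \cite[Proposition 4.2]{DLW} and used as a black box. So there is nothing to compare against, and your sketch is in fact along the standard lines of the argument in \cite{DLW} (and ultimately Hempel's proof of residual finiteness of Haken groups): uniformize the boundary behavior of covers of the pieces, glue, and then pass to a regular cover.

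That said, your write-up of the last step is muddled and, as stated, wrong in two places. First, once you have matched all the boundary tori on both sides of every JSJ torus, the glued space $N$ is \emph{closed}; it is a genuine (possibly disconnected) finite cover of $M$, not a semicover with boundary. Your assertion that ``the image of $\pi_1(N)$ has finite index'' is false for a semicover with boundary, but is fine for a component $N_0$ of the closed $N$ you actually built. Second, taking the normal core of $\pi_1(N_0)$ gives a regular cover $\tilde{M}$ that \emph{covers} $N_0$; it is not true that ``$N$ embeds in $\tilde{M}$ as a semicover.'' What you need is the following (correct) observation, which you should make explicit: since \emph{every} elevation of each JSJ torus $T$ in $N_0$ is $m$-characteristic, one has $g^{-1}\pi_1(N_0)g\cap\pi_1(T)=m\,\pi_1(T)$ for every $g\in\pi_1(M)$, and hence the normal core still meets $\pi_1(T)$ in exactly $m\,\pi_1(T)$. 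Likewise, elevations of $J_i$ in $\tilde{M}$ cover elevations in $N_0$, which are copies of $\tilde{J}^{(0)}_i$, so the factoring through $J'_i$ persists. With these corrections the final step goes through.

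One further caution on your choice of $m_0$: you define it from the peripheral indices of the \emph{given} covers $J'_i$, but in Step~2 you then pass to \emph{regular} covers $\tilde{J}^{(0)}_i$ before matching boundaries, and the peripheral indices can jump under this regularization. The existence of a regular cover of $J_i$ that factors through $J'_i$ and has every boundary torus exactly $m$-characteristic is indeed, as you say, the real technical input (Hamilton-type control of peripheral subgroups in finite quotients), but the $m_0$ for which this works must be read off \emph{after} that step, not before. The lemma only asserts the existence of some $m_0$, so this is a matter of reorganizing the argument rather than a genuine obstruction.
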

	
	\subsection{Aspiral almost fiber part}
		The following lemma follows the same idea of Rubinstein and Wang \cite[Lemma 2.4]{RW}.
		
		\begin{lemma}\label{aspiralPhiS}
			As $\pfib(S)$ is orientable and aspiral, 
			there exists a finite semicover 
				$$Q(\pfib(S))\to M$$ 
			where $Q(\pfib(S))$ is a bundle over the circle,
			and the restricted immersion $\pfib(S)\looparrowright M$
			factors through $Q(\pfib(S))$ as a fiber.
		\end{lemma}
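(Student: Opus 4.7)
The plan is to extend the argument of Rubinstein--Wang (cf.\ \cite{RW}, Lemma 2.4) to the present mixed setting by using the vanishing of the spirality character to fuse together a compatible family of fibered covers of the JSJ pieces carrying the components of $\pfib(S)$. Since $\pfib(S)$ is orientable (so the sign reduction of $s(\mathscr{H})$, being the orientation character of $\pfib(S)$, is trivial) and is assumed aspiral, Proposition \ref{bundleH} yields $s(\mathscr{H}) = 1 \in H^1(\pfib(S);\,\Rational^\times)$.

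First, for each vertex $v$ of $\pfib(\Lambda)$, I would choose a setup triple $(S'_v,\,\phi'_v,\,\{c'_\delta\}_{\delta|v})$ normalized so that $S'_v = S_v$. This is possible because every JSJ subsurface of $\pfib(S)$ is a virtual fiber of its carrier JSJ piece $J_v$, hence lifts as an embedded fiber in some finite fibered cover $J'_v \to J_v$, with monodromy $\phi'_v$ that we may take to fix $\partial S_v$. Under this normalization one has $[c'_\delta:c_\delta] = 1$, so $h_\delta = [T'_\delta:T_\delta]$ reduces to the covering degree of the boundary torus. By Formula \ref{spiralityH}, the vanishing of $s(\mathscr{H})$ is equivalent to the multiplicative cocycle $\delta \mapsto h_{\bar\delta}/h_\delta$ being a coboundary on the dual graph $\pfib(\Lambda)$; thus there exist positive integers $\mu_v$ such that $\mu_v h_\delta = \mu_{\bar v} h_{\bar\delta}$ across every edge $\delta\bar\delta$. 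Passing to the $\mu_v$-fold cyclic cover of $J'_v$ dual to the fiber $S_v$---equivalently, replacing $\phi'_v$ by $(\phi'_v)^{\mu_v}$---produces refined fibered covers $J''_v \to J_v$ whose boundary covering degrees $\mu_v h_\delta$ now agree across every shared JSJ torus of $\pfib(\Lambda)$.

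Finally, I would glue the pieces $J''_v$ along their matched boundary tori to assemble the desired semicover $Q(\pfib(S)) \to M$, and arrange the gluings so that the circle-fibrations $J''_v \to S^1$ combine into a global fibration $Q(\pfib(S)) \to S^1$ whose fiber is a homeomorphic copy of $\pfib(S)$. The equal covering degrees identify the underlying torus covers as subgroups of $\pi_1(T_\delta) = \Integral^2$ of the same index containing $c_\delta$, and the fiber curves from both sides coincide by construction; to align the \emph{degeneracy slopes} of the two circle-fibrations on each shared torus one may further pass to diagonal cyclic covers of the two adjacent pieces, or equivalently adjust the monodromies by Dehn twists along $c_\delta$, neither operation disturbing the covering-degree matching already achieved. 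I expect the main obstacle to be precisely this global degeneracy-slope alignment: while the combinatorial coboundary argument secures $h''_\delta = h''_{\bar\delta}$ automatically, matching the monodromy directions consistently around every cycle of $\pfib(\Lambda)$ requires additional care, particularly on hyperbolic JSJ pieces where the degeneracy slope comes from a pseudo-Anosov flow rather than a Seifert fibration, and must be done compatibly with the coboundary structure $\{\mu_v\}$ already fixed.
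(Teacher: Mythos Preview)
Your constructive approach---equalize the boundary degrees via a coboundary $\{\mu_v\}$ and then glue the fibered covers $J''_v$---is viable, but it differs from the paper's argument, which works inside the covering space $\tilde{M}_S$ corresponding to $\pi_1(S)$ and avoids piecewise gluing altogether. For a component $F$ of $\pfib(S)$ containing a JSJ curve $c$, the paper invokes aspirality through Proposition~\ref{psiAlpha}: each generator $\alpha_i$ of $\pi_1(F,*)$ acts on $A_c$ by a partial dilatation of rate $1$ and hence fixes an infinite-order element; a common fixed element $v\in A_c$ furnishes a second basepoint $\tilde{x}_1=v.\tilde{x}_0$ at which all of $F$ lifts. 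By \cite{FHS} this second lift $F_1$ is disjoint from and parallel to the based lift $F_0$, and the quotient of the region $\tilde{M}_F\subset\tilde{M}_S$ by the deck transformation carrying $F_0$ to $F_1$ is the desired $Q(F)$. Your route gives more explicit control over the covering degrees of the semicover, while the paper's is shorter and sidesteps all gluing bookkeeping.

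The obstacle you anticipate, however, is not a genuine one. Once $\mu_v h_\delta=\mu_{\bar v}h_{\bar\delta}=:n$, the subgroups $\pi_1(T''_\delta)$ and $\pi_1(T''_{\bar\delta})$ of $\pi_1(T)\cong\Integral^2$ coincide automatically: any index-$n$ subgroup containing the primitive element $c_\delta$ equals $\langle c_\delta,\,nf\rangle$, independent of which degeneracy slope $nf+k c_\delta$ one starts from. The deck group $\Integral/n$ of $T''\to T$ acts simply transitively on the $n$ lifts of $c_\delta$, so at each edge you may---independently, with no cycle condition---choose the gluing isomorphism so that the two fiber curves coincide. Since $\pfib(S)$ and $M$ are orientable, the transverse coorientations of the $S_v$ in $J''_v$ agree across each $c_\delta$, and cutting the assembled $Q$ along $F$ yields the pieces $S_v\times[0,1]$ glued along annuli by interval-direction-preserving maps; the cut-open manifold is therefore $F\times[0,1]$ and $Q$ fibers with fiber $F$. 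Aligning the degeneracy slopes themselves, or adjusting monodromies by boundary Dehn twists, is unnecessary.
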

		
		\begin{proof}
			It suffices to construct a component $Q(F)$ of $Q(\pfib(S))$ for each component of $F$ of $\pfib(S)$. 
			We may assume that $F$ contains a JSJ curve $c$ in the interior, otherwise
			$F$ is already a virtual fiber of the carrier JSJ piece of $M$.
			
			Choose a basepoint $*$ of $S$ in $c$, which is mapped to a basepoint
			$x_0$ of $M$. Let $(\tilde{M}_S,\tilde{x}_0)$ be the pointed covering space of $M$ 
			corresponding to the image of $\pi_1(S,*)$ in $\pi_1(M,x_0)$. 
			We denote by $F_0$ the lift of $F$ at $\tilde{x}_0$, which is contained in a unique submanifold
			$\tilde{M}_F$ of $\tilde{M}_S$ bounded by the JSJ cylinders corresponding to the JSJ curves
			$\partial F_0$ of $S$. 
						
			We claim that there exists a lift $\tilde{F}_1$ of $F$ in $\tilde{M}_F$ other than $F_0$. 
			To see this, we take an auxiliary cell decomposition of $F$ with a base $0$-cell $*$. 
			As $\pfib(S)$ is aspiral, Proposition \ref{psiAlpha}
			implies that there exists an elevation $\tilde{x}_1$ other than $x_0$, 
			such that the $1$-skeleton of $F$ lifts
			to $\tilde{M}_F$ based at $\tilde{x}_1$.  In fact, with the notations 
			of Section \ref{Sec-theFirstRecurrenceMap},  if $\alpha_1,\cdots,\alpha_k$ 
			is a collection of cellular cycles of $F$ 
			based at $*$ which generate $\pi_1(F,*)$, 
			then each $\alpha_i$ induces a partial map $\phi_i:A_c\dashrightarrow A_c$
			by a partial dilatation, satisfying $\phi_i(mv_i)= mv_i$ for some $v_i\in A_c$ and all $m\in\Integral$.
			Possibly after passing to a common multiple, 
			we may assume that all the $v_i$ are the same $v\in A_c$, so the point $\tilde{x}_1$
			can be taken as $v.\tilde{x}_0$. There is no further obstruction so the lift of the $1$-skeleton of $F$
			can be extended to be a lift $F_1$ at $\tilde{x}_1$. 
			
			As $F_0$ is embedded, it follows that $F_1$ and $F_0$ are parallel disjoint embeddings
			by \cite{FHS}. There is a deck transformation $\tau$ of $\tilde{M}_F$ taking $F_0$ to $F_1$.
			The quotient of $\tilde{M}_F$ by the action of $\tau$ is a finite semicover $Q(F)$ of $M$, equipped 
			with the natural immersion $Q(F)\to M$ induced from the covering map $\tilde{M}_S\to M$.	
			It is clear that $Q(F)$ is a bundle with a fiber $F_0$ lifted from $F$, so $Q(F)$ is the component
			of $Q(\pfib(S))$ as desired. 
		\end{proof}
	
	\subsection{Partial suspensions}\label{Subsec-partialSuspensions}
		For any positive integer $m$,
		we consider an auxiliary space $Y_m(S)$,
		obtained by suspending the augumented almost fiber part of $S$ of degree $m$.
		The so-called partial suspension $Y_m(S)$ is an immersed subcomplex of $M$,
		extending the immersion of $S$. We will show that $Y_m(S)$ can be elevated to be
		embedded in some finite cover of $M$, as long as $m$ is an integral multiple
		of some positive integer $m_0$ depending on the immersion of $S$. 
		This will imply that $S$ is virtually embedded. 
						
		\subsubsection{Partial suspensions over the augumented almost fiber part}
		Since $\pfib(S)$ is orientable and aspiral, by Lemma \ref{aspiralPhiS}, we may take a semicover  
		$Q_1(\pfib(S))\to M$ through which $\pfib(S)$ factors as a fiber.
		Furthermore, for each JSJ curve of $S$ disjoint from $\pfib(S)$, we take a finite cover $T_1(c)$ of 
		JSJ torus $T(c)$ in which $c$ lifts to be embedded. The disjoint union 
		of $Q_1(\pfib(S))$ and all the $T_1(c)$, denoted as $Q_1(\pfib^*(S))$,
		is a fiber bundle over the circle, and
		any fiber is homeomorphic to the disjoint union of $\pfib(S)$ and all the $c$, denoted as $\pfib^*(S)$.
		There is an induced immersion
			$$Q_1(\pfib^*(S))\to M.$$
		We will refer to $\pfib^*(S)$ as the \emph{augumented almost fiber part} of $S$, and $Q_1(\pfib^*(S))$
		an \emph{augumented suspension} over $\pfib^*(S)$ of degree $1$.
				
		For any positive integer $m$, denote by $Q_m(\pfib^*(S))$ the $m$-cyclic cover of
		$Q(\pfib^*(S))$ dual to the fiber $\pfib^*(S)$. Denote by
			$$Y_m(S)=S\cup_{\pfib^*(S)}Q_m(\pfib^*(S))$$
		the space obtained identifying the almost fiber part $\pfib(S)$ of $S$ with a fiber of $Q_m(\pfib^*(S))$.
		The space $Y_m(S)$ is equipped with a locally embedding map:
			$$Y_m(S)\to M.$$
		We will refer to $Y_m(S)$ as a \emph{partial suspension} of $S$ over $\pfib^*(S)$ of degree $m$. 
		
		\subsubsection{Embedded elevations}
		By saying that a property about the immersion $j:S\looparrowright M$ holds for 
		any \emph{sufficiently divisible}	positive integer $m$, 
		we mean that there exists a positive integer $m_0$ depending on the immersion,
		and that the property holds for any positive integer $m$ divisible by $m_0$.
						
		\begin{lemma}\label{virtuallyEmbeddedPartialSuspensionElevation}
			For any sufficiently divisible positive integer $m$,
			the partial suspension $Y_m(S)$ has an embedded elevation in some finite cover of $M$.
		\end{lemma}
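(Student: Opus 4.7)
The plan is to build an embedded elevation of $Y_m(S)$ in two stages: first realize each constituent piece of $Y_m(S)$ as an embedded object in a finite cover of the corresponding JSJ piece of $M$, and then merge all these piece-covers into a single finite cover of $M$ via Lemma \ref{mergeFiniteCovers}, with the divisibility of $m$ providing the slack needed to match slopes on the JSJ tori.

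By construction, the semicover $Q_m(\pfib^*(S))$ contains $\pfib^*(S)$ as an embedded fiber, so the almost fiber part of $S$ together with the auxiliary JSJ curves disjoint from $\pfib(S)$ is already embedded there. For each of the remaining JSJ subsurfaces of $S$---an essential sub-band of an elementary piece, a vertical subsurface in a nonelementary Seifert fibered piece, or a geometrically finite subsurface in a hyperbolic piece---I would choose a finite cover of the containing JSJ piece in which that subsurface lifts to an embedded subsurface. Such covers are provided by standard arguments in the elementary and vertical-Seifert cases, and by the separability consequences of Agol's work in the geometrically finite hyperbolic case. Feeding these chosen piece covers into Lemma \ref{mergeFiniteCovers}, I obtain a threshold $m_0$, and for any positive integer $m$ divisible by $m_0$ the lemma yields a regular JSJ $m$-characteristic finite cover $\tilde{M}\to M$ whose every JSJ piece factors both through the corresponding chosen piece cover and through the relevant piece of $Q_m(\pfib^*(S))$.

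It remains to verify that the embedded lifts of the JSJ subsurfaces of $S$ in $\tilde{M}$ assemble with the embedded lift of $Q_m(\pfib^*(S))$ into a single embedded copy of an elevation of $Y_m(S)$. This reduces to a slope-matching problem on the elevated JSJ tori of $\tilde{M}$: at each JSJ curve of $S$, the boundary slope inherited from the suspension on the almost fiber side must coincide with the boundary slope inherited from the chosen finite cover on the other side. The main obstacle is precisely this matching, and it is where the $m$-characteristic property is essential---every slope covering degree on every elevated JSJ torus is multiplied by $m$, so by taking $m$ sufficiently divisible the unavoidable index discrepancies between the two sides can be absorbed into a common elevated slope. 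Once the matching is arranged, the resulting union in $\tilde{M}$ is the sought embedded elevation of $Y_m(S)$.
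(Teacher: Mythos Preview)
Your outline has the right large-scale shape---embed pieces, then merge---but there are two genuine gaps.

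First, you misidentify the pieces to embed. The induced JSJ pieces of $Y_m(S)$ are not the JSJ subsurfaces $S_v$ of $S$: for a non-virtual-fiber $S_v$, the piece $Y_m(S_v)$ is $S_v$ \emph{together with} the hanging boundary tori $Y_m(c_\delta)$, one for each boundary component $c_\delta$ of $S_v$. Embedding $S_v$ alone in a finite cover of its carrier JSJ piece $J$ gives no control over where these tori sit, and without that control there is no way to interface with the suspension side. The paper instead shows that for $m$ sufficiently large the combined object $Y_m(S_v)$ is $\pi_1$-injective and relatively quasiconvex in $\pi_1(J)$---this is \cite[Theorem~4.1]{PW-embedded} in the hyperbolic case, not a direct consequence of Agol---and hence separable by Wise, so that the whole piece (surface plus tori) is virtually embedded. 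This is also where the condition on $m$ actually enters; your proposal leaves it unexplained why $m$ must be large at all for the non-fiber pieces.

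Second, your assembly argument is aimed at the wrong obstruction. Once the correct pieces $Y_m(S_v)$ are embedded in a merged cover $M'$, the edge spaces of any elevation $Y'_m(S)$ are already tori covering JSJ tori of $M'$, so there is no local slope-matching problem to solve and the $m$-characteristic property plays no role here. The genuine remaining obstruction is global: distinct JSJ pieces of $Y'_m(S)$ may land in the same JSJ piece of $M'$, so the elevation need not embed. The paper handles this with a third step you are missing entirely: the induced map of dual graphs $\Lambda(Y'_m(S))\to\Lambda(M')$ is a combinatorial local isometry into a graph with free fundamental group, hence $\pi_1$-injective, and subgroup separability of free groups (Scott) then produces a further finite cover of $M'$, induced from a cover of $\Lambda(M')$, in which the elevation of $Y_m(S)$ is embedded.
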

		
		\begin{proof}
			Observe that the partial suspension $Y_m(S)$ has an induced JSJ decomposition
				$$Y_m(S)\,=\,\bigcup_{\{Y_m(c_e)\}}\{Y_m(S_v)\}.$$
			Precisely, every JSJ curve $c_e$ of $S$ corresponds to
			an induced JSJ torus $Y_m(c_e)$,
			the elevation in $Q_m(\pfib^*(S))$ of the JSJ torus of $M$ containing $c_e$.
			Every JSJ surface $S_v$ of $S$ corresponds to an induced JSJ piece $Y_m(S_v)$:
			if $S_v$ is contained in $\pfib(S)$, $Y_m(S_v)$ is the corresponding component
			of $Q_m(\pfib(S))$; if $S_v$ is not contained in $\pfib(S)$, $Y_m(S_v)$ is the union of $S_v$ and
			the JSJ tori $Y_m(c_\delta)$ for each boundary component $c_\delta$ of $\partial S_v$.
			The point of the induced decomposition
			is that the JSJ pieces corresponding to a non-virtual-fiber JSJ subsurface should not be the subsurface only, 
			but with the hanging adjacent JSJ tori, because these tori are $\pi_1$-visible in 
			the carrier JSJ piece of $M$ as well. If one thickens up the immersed $Y_m(S)$ in $M$
			to be an immersed compact orientable $3$-manifold $\mathcal{Y}_m(S)$, the induced
			JSJ decomposition of $\mathcal{Y}_m(S)$ in the usual sense will naturally give rise to 
			the induced JSJ decomposition of $Y_m(S)$ as described above.
			
			\medskip\noindent\textbf{Step 1}. For any JSJ piece  $Y_m(S_v)$,
			carried by a JSJ piece $J$ of $M$,
			we claim that for any sufficiently large positive integer $m$,
			there exists a regular finite cover $J'$ of $J$ in which
			any elevation of $Y_m(S_v)$ is embedded.
			
			If $S_v$ is contained in $\pfib(S)$, there is nothing to prove, as $Y_m(S_v)$ is already
			a finite cover of $J$. If $S_v$ is not contained in $\pfib(S)$, there are two cases:
			either $S_v$ is a properly immersed vertical annulus in a nonelementary 
			Seifert fibered piece $J$, or $S_v$ is a properly immersed geometrically finite subsurface
			in a hyperbolic piece $J$.
			
			When $S_v$ is a properly immersed geometrically finite subsurface of a hyperoblic piece $J$,
			by \cite[Theorem 4.1]{PW-embedded}, $Y_m(S_v)$ is $\pi_1$-injective
      and relatively quasiconvex if $m$ is sufficiently large. Moreover, in this case,
      it is a consequence of the relative quasiconvex separability due to
      Wise \cite[Theorem 16.23]{Wise-long}
      (cf.~\cite[Corollary 4.2]{PW-embedded}) that $\pi_1(Y_m(S_v))$ is indeed separable
			in $\pi_1(J)$. Thus, there is a regular finite cover $J'$ of $J$ in which any elevation of
			$Y_m(S_v)$ is embedded.
			
			When $S_v$ is a properly immersed vertical annulus in a nonelementary Seifert fibered piece $J$,
			the base $2$-orbifold $\mathcal{O}$ is of the hyperbolic type. The JSJ piece $Y_m(S_v)$ is
			a circle bundle over a graph $Z_m(a_v)$, where $a_v$ is a properly immersed arc in $\mathcal{O}$,
			and at each endpoint of $a_v$ there is a wedged circle covering the corresponding
			boundary component of $\mathcal{O}$ of some degree divisible by $m$. A direct argument 
			in hyperbolic geometry, or a $2$-dimensional version of the argument in the previous case,
			implies that $\mathcal{O}$ has a regular finite cover $\mathcal{O}'$ in which any elevation
			of $Z_m(a_v)$ is embedded. The base cover $\mathcal{O}'$ of $\mathcal{O}$
			induces a regular finite cover $J'$ of $J$, in which any elevation of $Y_m(S_v)$ is embedded.  
			
			This proves the claim of Step 1.
			
			\medskip\noindent\textbf{Step 2}. 
			We claim that for any sufficiently divisible integer $m$,
			there exists a regular finite cover $M'$ of $M$ in which all the elevations
			of the JSJ pieces of $Y_m(S)$ are embedded. 
			
			We apply the merge trick. We may assume that $m$ is sufficiently large so that the claim
			of Step 1 holds for all the JSJ pieces $Y_m(S_v)$, giving rise to regular finite covers $J'_v$ for each carrier JSJ piece 
			of $M$. If some of these $J'_v$ cover the same JSJ piece $J$ of $M$, we pass to a common regular finite cover $J'$ of
			these $J'_v$; if some JSJ piece $J$ of $M$ is not covered by any of these $J'_v$, we choose $J'$ to be $J$ itself.
			By Lemma \ref{mergeFiniteCovers}, there exists a regular finite cover $M'$ of $M$ merging all these covers $J'$ of $J$.
			Note the conclusion in the claim of Step 1 is not affected passing to further regular finite cover $J''$ of $J$ that factors through $J'$.
			Thus, in the regular cover $M'$ of $M$, all the elevations of the JSJ pieces of $Y_m(S)$ are embedded, as claimed.
			
			\medskip\noindent\textbf{Step 3}.
			We finish the proof by showing that 
			the regular finite cover $M'$ of $M$ given by Step 2 
			has a further finite cover $\tilde{M}$ in which
			$Y_m(S)$ has an embedded elevation. Indeed, $\tilde{M}$ will be induced
			from a finite cover of the dual graph of $M'$.
			
			In fact, if $M'$ is a regular finite cover of $M$ given by Step 2, any elevation
				$$Y'_m(S)\to M'$$
			induces a map between the graphs dual to the decomposition
				$$\Lambda(Y'_m(S))\to \Lambda(M').$$
			Because the edge spaces of $Y'_m(S)$ are all finite covers of JSJ tori of $M'$,
			and because each vertex space of $Y'_m(S)$ is an elevated JSJ piece embedded 
			in a JSJ piece of $M'$, the induced map between the dual graphs is a combinatorial 
			local isometry, which is, in particular, $\pi_1$-injective.
			Because $\pi_1(\Lambda(M'))$ is a free group, and hence is LERF by Scott \cite{Sc-LERF},
			$\Lambda(M')$ has a regular finite cover in which 
			any elevation $\Lambda(Y'_m(S))$ is an embedded subgraph.
			Therefore, in the induced regular finite cover $\tilde{M}$ of $M$,
			any elevation of $Y_m(S)$ is embedded. 
			
			This completes the proof. 
		\end{proof}
		
		Because the augumented partial suspension $Y_m(S)$ contains a $\pi_1$-injective
		embedded copy of $S$, Lemma \ref{virtuallyEmbeddedPartialSuspensionElevation}
		implies that $S$ has an embedded elevation in a finite cover of $M$.
		This completes the proof of Proposition \ref{aspiralityImpliesVirtuallyEmbeddedElevation}.
		
\section{Virtual tautness}\label{Sec-virtualTautness}
	In this section, we prove virtual tautness of essentially embedded orientable subsurfaces.
	Then we  summarize the proof of Theorem \ref{main-aspiralityCriterion}
	in the end of this section.
	
	Recall that the complexity $\chi_-(S)$ of a (possibly disconnected) compact orientable surface $S$
	is defined to be the opposite of the sum of the negative Euler characteristics 
	of its components. In \cite{Th-norm}, Thurston introduces a semi-norm
	on $H_2(M,\partial M;\,\Rational)$ for any compact $3$-manifold $M$, 
	known as the \emph{Thurston norm}.
	The Thurston norm $x(\xi)$ of an integral relative second homology class $\xi$
	is defined to be the minimum of $\chi_-(S)$ among all
	compact oriented properly embedded subsurfaces $S$ of $M$
	representing $\xi$.
	We say that an oriented (possibly disconnected) compact properly embedded subsurface
	of a compact $3$-manifold is \emph{taut} if it realizes the Thurston
	norm of its homology class.
	
	\begin{proposition}\label{essentiallyEmbeddedImpliesVirtuallyTaut}
		Let $M$ be an orientable closed aspherical $3$-manifold, and 
		$S$ be an oriented closed essentially embedded subsurface of $M$.
		Then $S$ is virtually taut.
	\end{proposition}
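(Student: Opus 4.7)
The plan is to pass to a finite cover $\tilde M \to M$ in which the embedded lift $\tilde S$ of $S$ splits along the JSJ tori into JSJ subsurfaces $\tilde S_v$ that are each taut inside their containing JSJ pieces $\tilde J_v$, and then to deduce global tautness of $\tilde S$ by additivity of the Thurston norm across the JSJ decomposition.

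To build $\tilde M$, I would first exhibit, for each JSJ subsurface $S_v$ of $S$, a finite cover $J'_v \to J_v$ of the carrier JSJ piece in which the elevation of $S_v$ realizes the relative Thurston norm in $H_2(J'_v,\partial J'_v;\Integral)$. Four cases cover all possibilities: horizontal in a nonelementary Seifert piece; geometrically infinite in a hyperbolic piece; vertical annulus in a Seifert piece or sub-band in an elementary piece; and geometrically finite in a hyperbolic piece. In the first two cases (which together comprise $\pfib(S)$), a finite cover makes the elevation a genuine fiber of a surface bundle structure, which is classically Thurston-norm minimizing. In the third case the elevation has zero complexity and is trivially taut. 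In the fourth case, relatively quasiconvex subgroup separability, as used in \cite{PW-embedded}, yields a finite cover of $J_v$ that retracts onto the elevation of $S_v$, and any retract of a compact $3$-manifold onto a properly embedded essential subsurface is Thurston-norm minimizing. I would then apply Lemma \ref{mergeFiniteCovers} to merge these individual covers into a single regular finite cover $\tilde M \to M$. In $\tilde M$, the lift $\tilde S$ remains embedded, and each JSJ subsurface $\tilde S_v$ inherits the taut property inside $\tilde J_v$ through the factorization of $\tilde J_v \to J_v$ through $J'_v$.

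For global tautness, take any closed oriented embedded surface $S'$ in $\tilde M$ homologous to $\tilde S$ and realizing $x_{\tilde M}([\tilde S])$. Isotope $S'$ so that $S'\cap\partial\tilde J_v$ is a minimal system of essential curves on each JSJ torus, and cut $S'$ into pieces $S'_v\subset\tilde J_v$. Local tautness of $\tilde S_v$ gives the chain of inequalities
\[
\chi_-(S'_v) \;\ge\; x_{\tilde J_v}\bigl([S'_v,\partial S'_v]\bigr) \;\ge\; x_{\tilde J_v}\bigl([\tilde S_v,\partial \tilde S_v]\bigr) \;=\; \chi_-(\tilde S_v),
\]
which summed over $v$ would yield $\chi_-(S') \ge \chi_-(\tilde S)$, so that $\tilde S$ is taut and hence $S$ virtually taut, completing the implication $(2)\Rightarrow(3)$ of Theorem \ref{main-aspiralityCriterion} via Gabai's theorem.

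The main obstacle is the middle inequality in the display: a priori, shifting a boundary $1$-cycle across a JSJ torus can decrease the relative Thurston norm on one adjacent piece at the expense of the other, so the piece-by-piece comparison needs justification. I would resolve this by arranging $\tilde M$, through an additional application of Lemma \ref{mergeFiniteCovers} with sufficiently divisible covering data, so that the boundary slopes of $\tilde S$ on each JSJ torus are long multiples of the fiber or retract slopes in the two adjacent pieces. Convexity of the Thurston seminorm together with a balancing argument pairing the two sides of each JSJ torus then forces any attempted boundary shift to cost at least as much complexity on one side as it saves on the other, which yields the required additivity and completes the argument.
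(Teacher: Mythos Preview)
Your overall strategy---establish local tautness of each JSJ subsurface in a suitable cover of its carrier piece, then assemble and invoke additivity across the JSJ decomposition---is exactly the paper's strategy. The local analysis (fiber case, retract case) is also correct and matches Lemmas~\ref{localTautness} and~\ref{localPartialSuspension}. But the assembly step has a genuine gap, and the obstacle you flag is not the real one.

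Your worry about the middle inequality is a red herring. If $[S']=[\tilde S]$ in $H_2(\tilde M)$, then excision immediately gives $[S'\cap\tilde J]=[\tilde S\cap\tilde J]$ in $H_2(\tilde J,\partial\tilde J)$ for every JSJ piece $\tilde J$; no ``boundary shifting'' is possible. The paper's Lemma~\ref{globalTautness} uses exactly this. So your convexity/balancing fix is unnecessary.

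The actual gap is hidden in your indexing. You sum over JSJ subsurfaces $v$, but the comparison must be made piece by piece, summing over JSJ pieces $\tilde J$ of $\tilde M$. After merging via Lemma~\ref{mergeFiniteCovers}, a single piece $\tilde J$ may well contain several JSJ subsurfaces of $\tilde S$. For additivity you need the \emph{union} $\tilde S\cap\tilde J$ to be taut in $\tilde J$, and this does not follow from tautness of the individual components: two taut surfaces representing classes on different faces of the Thurston norm ball can have $x([S_1]+[S_2])<\chi_-(S_1)+\chi_-(S_2)$. Your proposed resolution does nothing to prevent this situation.

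The paper handles this not by merging but by first gluing the chosen covers $\tilde J_v$ directly to one another along matching boundary tori (following the combinatorics of the partial suspension $Y_m(S)$) to form a connected finite \emph{semicover} $N$, and then invoking a separate separability argument (Lemma~\ref{completingASemicover}) to complete $N$ to a finite cover $\tilde M$ without further covering the pieces. By construction each relevant JSJ piece of $\tilde M$ is one of the $\tilde J_v$'s and contains exactly one JSJ subsurface of the lift of $S$, so the hypothesis of Lemma~\ref{globalTautness} is satisfied on the nose and additivity is immediate.
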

	
	The proof of Proposition \ref{essentiallyEmbeddedImpliesVirtuallyTaut}
	follows the idea in the proof of Proposition \ref{aspiralityImpliesVirtuallyEmbeddedElevation},
	with strengthened arguments	at a number of places.
	
	\subsection{Completing a semicover}
		We first need a stronger version of Przytycki--Wise \cite[Proposition 3.4]{PW-embedded},
		so as to complete a connected semicover without passing to a further finite cover.
				
		\begin{lemma}\label{completingASemicover}
			If $N$ is a connected finite semicover of an orientable closed aspherical $3$-manifold $M$,
			then $N$ has an embedded lift in a finite cover of $M$.
			In fact, the semi-covering $N\to M$ is $\pi_1$-injective and $\pi_1$-separable.
		\end{lemma}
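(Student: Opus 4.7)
The plan is to imitate the three-step construction in the proof of Lemma \ref{virtuallyEmbeddedPartialSuspensionElevation}, adapted to an arbitrary connected finite semicover $N\to M$, and to deduce $\pi_1$-injectivity and $\pi_1$-separability as consequences of producing an embedded lift. First, the JSJ decomposition of $M$ pulls back under the semicovering to a graph-of-spaces decomposition of $N$ whose vertex spaces are finite covers of JSJ pieces and whose edge spaces are finite covers of JSJ tori. Since tori are $\pi_1$-injective in JSJ pieces and JSJ pieces $\pi_1$-inject into $M$, a Bass--Serre normal form argument yields that $\pi_1(N)\to\pi_1(M)$ is injective.

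For the embedded lift, for each vertex piece $N_v$ of $N$ carried by a JSJ piece $J_v$ of $M$, the subgroup $\pi_1(N_v)\le\pi_1(J_v)$ is separable---by the relative quasiconvex separability of Wise \cite[Theorem 16.23]{Wise-long} in the hyperbolic case and an elementary base-$2$-orbifold argument in the Seifert fibered case---so $N_v$ embeds in some regular finite cover $J'_v$ of $J_v$. Applying Lemma \ref{mergeFiniteCovers} yields a regular finite cover $M'$ of $M$ in which every elevation of every vertex piece $N_v$ is embedded. For any elevation $N'$ of $N$ in $M'$, the induced map on dual graphs $\Lambda(N')\to\Lambda(M')$ is a combinatorial local isometry into a graph with free fundamental group, so by Scott's LERF theorem \cite{Sc-LERF} a further regular finite cover $\tilde M$ of $M'$ produces an embedded lift of $N$ in $\tilde M$.

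Finally, $\pi_1$-separability follows from the embedded lift. Given $g\in\pi_1(M)\setminus\pi_1(N)$, choose $\tilde M$ as above so that $N$ embeds in $\tilde M$. Cutting $\tilde M$ along $\partial N$ presents $\pi_1(\tilde M)$ as a graph of groups in which $\pi_1(N)$ is a vertex group with abelian (peripheral torus) edge groups, so $\pi_1(N)$ is separable in $\pi_1(\tilde M)$ by a standard graph-of-groups argument; combined with the finite index of $\pi_1(\tilde M)$ in $\pi_1(M)$, this separates $g$ from $\pi_1(N)$. The main obstacle is the embedded-lift step: arranging a single finite cover of $M$ in which every vertex piece of $N$ embeds \emph{and} the dual-graph structure faithfully survives, which requires combining Lemma \ref{mergeFiniteCovers} with the Scott LERF step in a way that does not disrupt the piecewise embeddings obtained from the relative quasiconvex separability theorem.
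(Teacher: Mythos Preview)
Your three-step construction of an embedded lift is close in spirit to the paper's own use of merging plus Scott's LERF on dual graphs, but two points deserve attention.

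First, a minor observation: for a finite semicover the vertex pieces $N_v$ are already finite \emph{covers} of the JSJ pieces $J_v$, so $\pi_1(N_v)$ has finite index in $\pi_1(J_v)$ and is trivially separable. Invoking Wise's relative quasiconvex separability theorem here is unnecessary; you can take $J'_v$ to be any common regular finite cover of the $N_v$ lying over $J_v$.

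Second, and more substantively, your deduction of $\pi_1$-separability from the existence of a single embedded lift is a genuine gap. You assert that once $N$ is embedded in one finite cover $\tilde M$, cutting along $\partial N$ exhibits $\pi_1(N)$ as a vertex group with abelian edge groups, and that a ``standard graph-of-groups argument'' then yields separability of $\pi_1(N)$ in $\pi_1(\tilde M)$. But separability of a vertex group in a graph of groups with abelian edge groups is not a standard fact; it is essentially what has to be proved, and in this generality requires exactly the kind of work the lemma is meant to supply. The paper proceeds in the opposite logical direction: it quotes \cite[Proposition~3.4]{PW-embedded} to reduce to the case where $N$ is already embedded in $M$, and then, for a given $\alpha\in\pi_1(M)\setminus\pi_1(N)$, it uses the separability of JSJ tori in adjacent JSJ pieces \cite{LoN} to build a regular finite cover $M'$ in which the $1$-complex $\gamma\cup\tau$ carrying $\alpha$ projects to a locally embedded path in the dual graph $\Lambda_{M'}$ not contained in the subgraph $\Lambda_{N'}$. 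Scott's LERF for free groups then gives a further cover separating the relevant power $\alpha'$ from $\pi_1(N')$. Thus the paper establishes separability directly, element by element, and only afterward invokes Scott's lemma \cite[Lemma~1.4]{Sc-LERF} to obtain the embedded lift---whereas you attempt the reverse implication without justification.
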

		
		\begin{proof}
			After choosing auxiliary basepoints, we prove that $\pi_1(N)$ injects
			to be a separable subgroup of $\pi_1(M)$.
			Then $N$ is virtually embedded by Scott \cite[Lemma 1.4]{Sc-LERF}.
			Because $N$ has an elevation embedded in a finite cover of $M$,
			\cite[Proposition 3.4]{PW-embedded},
			and separability is a virtual property, 
			we may passing the pair and assume $N$ is embedded	in $M$.			
			Choose a basepoint $*$ of $M$ contained in $N$. Below we abuse the notation
			of pointed loops and elements in fundamental groups. 
			
			For any element $\alpha\in\pi_1(M)$	not contained in $\pi_1(N)$,
			$\alpha$ is pointed homotopic to a concatenation $\tau\gamma\bar{\tau}$,
			where $\gamma$ is a loop freely homotopic to $\alpha$,
			and $\tau$ is a path from $*$ to a point on $\gamma$. 
			Moreover, we require the $1$-complex $\gamma\cup\tau$ 
			to be transverse to the JSJ tori, minimizing the number of intersections
			subject to the conditions above. Because each JSJ torus
			is $\pi_1$-separable in an adjacent JSJ piece \cite[Theorem 1]{LoN}, 
			for every embedded subpath $\gamma\cup\tau$ which is properly embedded
			in a JSJ piece of $M$, there is a regular finite cover of the JSJ piece so that 
			any elevation of the path connects two distinct boundary components.
			Applying merging (Lemma \ref{mergeFiniteCovers}),
			we obtain a regular finite pointed cover $M'$ of $M$ 
			with a pointed elevation $N'$ of $N$. Note that $\pi_1(N')$ equals $\pi_1(N)\cap\pi_1(M')$.
			For any power $\alpha'$ of $\alpha$ contained in $\pi_1(M')$,  
			$\alpha'$ takes the form $\tau'\gamma'\bar{\tau}'$ up to pointed homotopy,
			where $\gamma'$ is a finite cyclic cover of $\gamma$, and $\tau'$ is the pointed lift of $\tau$.
			The construction of $M'$ implies that the projection of the pointed $1$-complex 
			$\gamma'\cup\tau'$ to pointed JSJ dual graph $\Lambda_{M'}$ of $M'$
			is a local embedding. 
			Moreover, since $\alpha\not\in\pi_1(N)$ and hence $\alpha'\not\in\pi_1(N')$, 
			the projection of $\gamma'\cup\tau'$	cannot be contained in the subgraph $\Lambda_{N'}$ dual
			to $N'$.	Using the fact that $\Lambda_{N'}$ is $\pi_1$-separable 
			in $\Lambda_{M'}$ \cite[Theorem 2.1]{Sc-LERF}, 
			we obtain a finite cover $M''$ of $M'$ induced by a cover of $\Lambda_{M'}$,
			such that $\pi_1(N')$ is contained in $\pi_1(M'')$ but
			$\alpha'\not\in\pi_1(M'')$, or in other words, $\alpha'$ is separable
			from $\pi_1(N')$.
			We conclude that $\alpha$ is separable from $\pi_1(N)$ 
			in $\pi_1(M)$.			
		\end{proof}

	\subsection{Criteria for tautness}
		Next, we need to introduce some controllable conditions
		to guarantee tautness of $S$ in terms of its induced JSJ decomposition.
	
		\begin{lemma}\label{globalTautness}
			If each JSJ subsurface of $S$ is taut and each JSJ piece of $M$ contains
			at most one JSJ subsurface of $S$, then $S$ is taut.
		\end{lemma}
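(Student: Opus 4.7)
The plan is to establish the lemma by comparing $S$ against an arbitrary competitor $S'$ representing the same class in $H_2(M;\Integral)$ and showing $\chi_-(S)\leq\chi_-(S')$. First I would normalize $S'$: after a small isotopy it can be made transverse to the union of JSJ tori, and any inessential circle of intersection can be removed by an innermost-disk surgery that does not increase $\chi_-(S')$, using that $M$ is aspherical hence irreducible. Cut along the JSJ tori and write $\Sigma_i=S\cap J_i$ and $\Sigma'_i=S'\cap J_i$ for the restrictions to each JSJ piece $J_i$. By the at-most-one-subsurface-per-piece hypothesis, $\Sigma_i$ is either empty or equals the unique JSJ subsurface of $S$ carried by $J_i$, which is taut in $J_i$ by assumption.

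The key next step is to verify that $\Sigma_i$ and $\Sigma'_i$ represent the same class in $H_2(J_i,\partial J_i;\Integral)$ for every $i$. To see this, I would pick an integral $3$-chain $W$ in $M$ with $\partial W=S-S'$, arranged transverse to the JSJ tori, and observe that $W_i=W\cap J_i$ gives a relative homology $\partial W_i=(\Sigma_i-\Sigma'_i)+(W\cap\partial J_i)$ in which the boundary-carried correction is automatically zero in $H_2(J_i,\partial J_i;\Integral)$. Granting this, tautness of $\Sigma_i$ in $J_i$ gives the piecewise inequality $\chi_-(\Sigma_i)\leq\chi_-(\Sigma'_i)$, including the trivial bound $0\leq\chi_-(\Sigma'_i)$ when $\Sigma_i$ is empty. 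Summing over $i$, and using that $\chi_-$ is additive across the cut circles (since circles contribute $0$), yields $\chi_-(S)\leq\chi_-(S')$ and hence the claimed tautness of $S$.

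The main obstacle I expect is the normalization step: ensuring that the innermost-disk reduction on inessential intersection circles of $S'$ with the JSJ tori preserves the homology class of $S'$ and does not increase $\chi_-$. Because $M$ is aspherical and the JSJ tori are $\pi_1$-injective, the standard surgery arguments handle this. A second subtlety worth emphasizing is that the at-most-one-subsurface-per-piece hypothesis is genuinely needed to preclude oppositely oriented components of $S\cap J_i$ from canceling in $H_2(J_i,\partial J_i;\Integral)$, which would otherwise allow a more efficient representative of the restricted class and destroy the piecewise inequality above.
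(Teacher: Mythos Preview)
Your proof is correct and follows essentially the same route as the paper: compare $S$ with a norm-minimizing competitor $S'$, isotope $S'$ into good position relative to the JSJ tori, use that $S\cap J_i$ and $S'\cap J_i$ represent the same class in $H_2(J_i,\partial J_i)$ (the paper cites excision where you build an explicit $3$-chain), and then sum the piecewise inequalities coming from tautness of the JSJ subsurfaces. Your treatment is in fact more detailed than the paper's, which compresses the normalization and homology comparison into a sentence each.
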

			
		\begin{proof}
			Let $S'$ be a taut subsurface representing $[S]\in H_2(J;\,\Rational)$, then 
			$S'$ can be isotoped to intersect the JSJ tori of $M$ in directly parallel essential curves.
			Note that for each JSJ piece $J$, $[S\cap J]$ and $[S'\cap J]$ 
			represents the same homology class in $H_2(J,\partial J)$ by excision.
			Since the JSJ subsurfaces $S_v$ of $S$ are taut and contained by distinct JSJ pieces of $M$,
				$$x([S])=\chi_-(S')=\sum_J\chi_-(S'\cap J)\geq \sum_v\chi_-(S_v)=\chi_-(S)\geq x([S]).$$
			Hence $x([S])$ equals $\chi_-(S)$, which means $S$ is taut.
		\end{proof}
		
		\begin{lemma}\label{localTautness}
			If a JSJ subsurface $S_v$ contained in a JSJ piece $J$ is either
			a fiber or a retract of $J$, then $S_v$ is taut in $J$.
		\end{lemma}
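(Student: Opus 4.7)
The plan is to treat the two cases of the hypothesis separately, both of which reduce to a standard degree-type inequality for proper maps of compact oriented surfaces.

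\textbf{Fiber case.} If $J$ fibers over a $1$-orbifold $C$ with $S_v$ as a fiber, then either $C$ is a circle, and tautness of $S_v$ is Thurston's classical theorem \cite{Th-norm}; or $C$ is an interval, so $J \cong S_v \times [0,1]$. In the product case, any competitor $S'$ representing $[S_v] \in H_2(J,\partial J;\,\Integral)$ can be composed with the projection $\pi\colon J \to S_v$ to produce a proper map $S'\to S_v$ whose restrictions over the components of $S_v$ have algebraic degrees summing to $\pm 1$; the standard surface degree inequality then yields $\chi_-(S')\geq \chi_-(S_v)$.

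\textbf{Retract case.} Fix a retraction $r\colon J\to S_v$. Since $S_v$ is two-sided in $J$ (as $S$ is oriented), $S_v$ has a collar neighborhood and we may homotope $r$ near $\partial J$ so as to make it a map of pairs $(J,\partial J)\to (S_v,\partial S_v)$ --- the components of $\partial J$ adjacent to $\partial S_v$ can be retracted onto $\partial S_v$ within the collar, and the remaining components of $\partial J$ already map into $S_v$ and can be pushed into $\partial S_v$ using $\pi_1$-injectivity. Given any competing properly embedded oriented $S'$ with $[S']=[S_v]$ in $H_2(J,\partial J;\,\Integral)$, the composition $f = r\circ\iota_{S'}\colon (S',\partial S')\to (S_v,\partial S_v)$ satisfies
\[ f_*[S',\partial S'] \,=\, r_*[S_v,\partial S_v] \,=\, [S_v,\partial S_v], \]
by functoriality and the retraction property. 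Decomposing by components of $S_v$, on each component $S_v^{(i)}$ the degrees of $f$ on the components of $f^{-1}(S_v^{(i)})$ sum to $\pm 1$.

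\textbf{Conclusion and main obstacle.} Applying the standard inequality $\chi_-(\Sigma')\geq |d|\,\chi_-(\Sigma)$ for a proper degree-$d$ map of compact oriented surfaces (a consequence of the Gromov norm being proportional to $\chi_-$ on hyperbolic surfaces, together with its multiplicativity under degree) to each component of the preimage, and summing, gives $\chi_-(S')\geq\chi_-(S_v)$ in both cases. The main technical point is the surface degree inequality, which is classical but not completely elementary; a secondary but routine technicality is the modification of $r$ so that it respects boundaries, which is needed in order for $f_*$ to land in the relative class $[S_v,\partial S_v]$.
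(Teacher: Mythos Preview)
Your overall strategy in the retract case---compose a competitor with the retraction to get a proper degree-one map, then invoke the inequality $\chi_-(\Sigma')\geq |d|\,\chi_-(\Sigma)$---is exactly the paper's strategy. But there is a real gap in the step where you homotope $r$ to a map of pairs $(J,\partial J)\to(S_v,\partial S_v)$.

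Consider a boundary torus $T$ of $J$ that is \emph{disjoint} from $S_v$. The restriction $r|_T$ lands in $S_v$, and since $\pi_1(S_v)$ contains no $\Integral^2$, the map factors through a circle up to homotopy. But there is no reason this circle is peripheral in $S_v$; ``$\pi_1$-injectivity'' does not force that. Similarly, for a boundary torus $T$ meeting $S_v$ in a curve $c$, the annulus $T\setminus c$ maps into $S_v$ rel $c$, and this annulus may represent a nontrivial element of $\pi_1(S_v)$ based at $c$, so it cannot in general be pushed into a collar of $c$. Thus $r$ cannot be made into a map of pairs globally, and your $f=r\circ\iota_{S'}$ need not be proper.

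The paper fixes this by modifying the \emph{competitor} rather than the retraction. Starting from a taut representative $E$ of $[S_v]$, one first tubes together indirectly parallel boundary curves of $E$ (which does not change $\chi_-$) so that on each boundary torus $T$, the curves $E\cap T$ and $S_v\cap T$ are directly parallel and of the same multiplicity. The existence of the retraction forces $S_v\cap T$ to be connected or empty (two parallel boundary curves on $T$ would become freely homotopic in $S_v$, impossible for $\chi(S_v)<0$); hence after the modification $E\cap T$ is also a single curve or empty, and in particular $E$ has no boundary on tori disjoint from $S_v$. Now each component of $\partial E$ is homotopic in $J$ to a boundary component of $S_v$, so $r|_{\partial E}$ is homotopic into $\partial S_v$ and $r|_E$ can be made a proper degree-one map $E\to S_v$. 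The inequality $\chi_-(E)\geq\chi_-(S_v)$ then follows as you indicate.

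Your fiber case is fine and essentially equivalent to the paper's one-line appeal to the taut foliation given by the bundle structure.
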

		
		\begin{proof}
			If $S_v$ is a fiber of $J$, it is taut because it is a compact leaf of 
			the taut codimension-$1$ foliation of $J$ given by the bundle structure.
			
			If $S_v$ is a retract of $J$, consider any other taut subsurface $E$ representing $[S_v]$
			in $H_2(J,\partial J;\Rational)$. Possibly after gluing indirectly parallel pairs of boundary components,
			which does not affect the complexity, we may assume for each boundary component $T$ of $J$
			that $S_v\cap T$ and $E\cap T$ are directly parallel curves of the same multiplicity, possibly zero.
			The existence of the retraction $J\to S_v$ implies that $S_v\cap T$ must either be connected or empty.
			Moreover, the retraction induces a proper degree one map $E\to S_v$, so
			$$x([S_v])=\chi_-(E)\geq\chi_-(S_v)\geq x([S_v]).$$
			Hence $S_v$ is taut in $J$.			
		\end{proof}
	
	\subsection{Virtual embedding of partial suspensions}
		We modify some argument in the construction of partial suspensions
		in Subsection \ref{Subsec-partialSuspensions} to fit the assumptions
		of Lemmas \ref{globalTautness}, \ref{localTautness}.
		Adopt the notations there.
							
		\begin{lemma}\label{localPartialSuspension}
			Let $S_v$ be a JSJ subsurface of $S$ carried by a JSJ piece of $M$.
			For any sufficiently divisible positive integer $m$, the following conditions are satisfied.
			\begin{enumerate}
				\item If $S_v$ is contained in $\pfib(S)$, then $Y_m(S_v)$ is a finite cover $\tilde{J}$ of $J$,
				and $S_v$ intersects each boundary component of $Y_m(S_v)$ in a curve;
				\item If $S_v$ is not contained in $\pfib(S)$, then some finite cover $\tilde{J}$ 
				of $J$ contains an embedded lift of $Y_m(S_v)$, and hence $S_v$, as a retract.
			\end{enumerate}
		\end{lemma}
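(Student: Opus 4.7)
The plan is to dispatch case (1) by direct inspection of the construction in Subsection \ref{Subsec-partialSuspensions}, and then, for case (2), first observe that $S_v$ is a deformation retract of $Y_m(S_v)$ so that the problem reduces to producing a virtual retract of $J$ onto an embedded copy of $Y_m(S_v)$. The two subcases of (2), according to the geometric type of $J$, are then handled by quite different virtual-retract technologies.

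For (1), since $S_v\subset\pfib(S)$, the piece $Y_m(S_v)$ is by definition the component of $Q_m(\pfib(S))$ containing the chosen fibered lift of $S_v$. As an $m$-cyclic cover of the corresponding component of the finite semicover $Q_1(\pfib(S))\to M$, it finitely covers $J$. The boundary tori of $Y_m(S_v)$ are indexed by the orbits of $\partial S_v$ under the bundle monodromy of the component; the intersection of $S_v$ with such a boundary torus is the corresponding orbit of circles, which reduces to a single curve exactly when that orbit has length one. Choosing $m_0$ divisible by the least common multiple of these finitely many orbit lengths ensures the conclusion for every sufficiently divisible $m$.

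For (2), recall that $Y_m(S_v)=S_v\cup\bigcup_\delta Y_m(c_\delta)$ is obtained from $S_v$ by attaching a torus $Y_m(c_\delta)$ along each boundary curve $c_\delta\subset\partial S_v$; deformation-retracting each of these tori onto its attaching circle exhibits $S_v$ as a retract of $Y_m(S_v)$. Hence it suffices to find a finite cover $\tilde J\to J$ in which $Y_m(S_v)$ lifts embedded and is a retract. When $J$ is hyperbolic, Step~1 of the proof of Lemma \ref{virtuallyEmbeddedPartialSuspensionElevation} already shows, for sufficiently large $m$, that $\pi_1(Y_m(S_v))$ is relatively quasiconvex and separable in the relatively hyperbolic group $\pi_1(J)$. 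The plan is then to upgrade separability to virtual retraction by invoking the virtual retract property for relatively quasiconvex subgroups in virtually compact special relatively hyperbolic groups, available via the Haglund--Wise framework as developed by Wise \cite{Wise-long}; the peripheral subgroups of $\pi_1(Y_m(S_v))$ lie in abelian cusp subgroups and are automatically separable, so the hypotheses apply.

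When $J$ is Seifert fibered, $S_v$ is a vertical annulus and $Y_m(S_v)$ is a circle bundle over a finite graph $Z_m(a_v)$ in the hyperbolic base $2$-orbifold $\mathcal{O}$. Step~1 of the previous lemma already produces a finite cover of $\mathcal{O}$ in which $Z_m(a_v)$ embeds; passing to a further finite cover to remove orbifold singularities makes the base a surface with boundary, whose fundamental group is free. Then the classical theorem of M.~Hall that finitely generated subgroups of free groups are virtual free factors yields a further finite cover in which $Z_m(a_v)$ lifts embedded as a retract, and pulling this cover back along the Seifert fibration produces the desired $\tilde J$. The hardest step will be the hyperbolic subcase of (2): promoting Przytycki--Wise separability to a virtual retraction in the cusped, relatively hyperbolic setting relies on the full strength of the Agol--Wise virtual specialness machinery, whereas case (1) and the Seifert subcase of (2) reduce to elementary combinatorial or classical group-theoretic considerations.
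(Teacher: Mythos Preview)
Your proposal is correct and follows essentially the same approach as the paper: case (1) is handled by the bundle structure of $Q_m(\pfib(S))$ with $m$ divisible enough, and case (2) by upgrading the separability arguments of Step~1 in Lemma~\ref{virtuallyEmbeddedPartialSuspensionElevation} to virtual retraction via Wise \cite{Wise-long} in the hyperbolic subcase and an analogous base-orbifold argument in the Seifert subcase. Two minor differences: for (1) the paper exploits the ambient embeddedness hypothesis of Proposition~\ref{essentiallyEmbeddedImpliesVirtuallyTaut} to take $Q_1(S_v)=J$ outright (an embedded virtual fiber of a JSJ piece is an honest fiber), whereas your orbit-length argument works without that simplification; and for the Seifert subcase of (2) you supply the concrete mechanism (Marshall Hall's theorem for free groups) that the paper leaves implicit.
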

		
		\begin{proof}
			If $S_v$ is contained in $\pfib(S)$, $Y_1(S_v)$, which is $Q_1(S_v)$, can be taken to be $J$.
			Since $S_v$ is oriented and embedded, $Y_m(S_v)$ is the $m$-cyclic cover dual to $S_v$
			of $J$, so for any sufficiently divisible positive integer, $S_v$ intersects each boundary of $Y$
			in a curve.
			
			If $S_v$ is not contained in $\pfib(S)$, 
			$S_v$ is either geometrically finite or vertical.
			In the former case, we strengthen the argument before
			by invoking Wise \cite[Theorem 16.23]{Wise-long},
			and conclude that $\pi_1(Y_m(S_v))$ is a virtual retract of $\pi_1(J)$.
			This implies a finite cover $\tilde{J}$ in which $Y_m(S_v)$ lifts, and
			the $\pi_1$-retraction map $\tilde{J}\to Y_m(S_v)$ can be constructed
			by extending the identity map restricted to the lift of $Y_m(S_v)$.
			In the latter case, we may similarly strengthen the argument before
			to conclude that $Y_m(S_v)$ is a virtual retract of $J$, 
			so we omit the details here. Note that $S_v$ is also a virtual retract of $J$
			as it is a retract of $Y_m(S_v)$.
		\end{proof}

		To prove Proposition \ref{essentiallyEmbeddedImpliesVirtuallyTaut},
		we take a finite cover $\tilde{J}_v$ of the JSJ piece of $M$ carrying $S_v$,
		for some sufficiently divisible positive integer $m$ 
		satisfying the conclusion of Lemma \ref{localPartialSuspension},
		The finite covers $\tilde{J}_v$ contain embedded lifts of $Y_m(S_v)$,
		so we glue these $\tilde{J}_v$ up along some
		boundary components in the same way we assemble the augumented partial suspension
		$Y_m(S)$ from $Y_m(S_v)$. 
		The result is a connected finite semicover $N$ of $M$ in which
		$Y_m(S)$ lifts. By Lemma \ref{completingASemicover}, we complete $N$ as 
		a finite cover $\tilde{M}$ of $M$. The finite cover contains a lift of $Y_m(S)$, and
		hence a lift of $S$ satisfying the conditions of the tautness criteria Lemmas \ref{globalTautness}, \ref{localTautness}.
		It follows that $S$ lifts to be a taut subsurface of $\tilde{M}$.
		
		This completes the proof of Proposition \ref{essentiallyEmbeddedImpliesVirtuallyTaut}.

	\subsection{The proof of Theorem \ref{main-aspiralityCriterion}}
		Let $S$ be a closed essentially immersed subsurface of a closed orientable aspherical $3$-manifold $M$.
					
		(1) $\Rightarrow$ (2):
		If the almost fiber part $\pfib(S)$ of $S$
		is aspiral, by Proposition \ref{aspiralityImpliesVirtuallyEmbeddedElevation}, $S$ has a virtually embedded elevation. The latter implies that $S$
		is $\pi_1$-separable by Przytycki--Wise \cite{PW-embedded}, or that $S$ is virtually embedded
		\cite[Lemma 1.4]{Sc-LERF}.  
		
		(2) $\Rightarrow$ (3): 
		If $S$ is virtually embedded, we may pass to a finite cover and assume 
		that $S$ is already embedded.
		First suppose that $S$ is orientable. By Proposition \ref{essentiallyEmbeddedImpliesVirtuallyTaut},
		$S$ is taut in some finite cover $\tilde{M}$ of $M$. In this case, Gabai \cite{Ga} constructs a taut foliation $\mathscr{F}$
		of $\tilde{M}$ that contains $S$ as a leaf. In fact, $\mathscr{F}$ can be either finite depth or smooth.
		
		When $S$ is not orientable, we derive a virtual taut foliation from the orientable
		case as follows. The boundary of a compact regular neighborhood $\mathcal{N}$
		of $S$ in $M$ is a twisted interval bundle over $S$ whose boundary is
		isomorphic to the characteristic orientable double cover $S^*$ of $S$, equipped with an orientation-reversing 
		free involution $\nu:S^*\to S^*$ which is the deck transformation.
		As $S^*$ is essentially embedded and orientable, the previous case yields
		a finite cover $M'$ in which $S^*$ is a leaf of a taut foliation $\mathscr{F}^*$.
		Let $M'(S)$ be the compact $3$-manifold obtained from $M'$ by removing a regular
		neighborhood of $S^*$, so $\partial M'(S)$ is homeomorphic to $(+S^*)\sqcup (-S^*)$,
		two oppositely oriented copies of $S^*$. 
		Glue $+S^*$ to itself by $\nu$ and similarly for $-S^*$. 
		The result is a $3$-manifold $\tilde{M}$ which finitely covers $M$ in an obvious way.
		Moreover, the quotient of $\pm S^*$ are two copies
		of $S$ in $\tilde{M}$, and the complement can be foliated by the leaves of $\mathscr{F}^*$.
		The result is clearly a taut foliation $\mathscr{F}$ of $\tilde{M}$,
		which contains a lift of $S$ as a leaf,
		so we have proved the non-orientable case.
		
		(3) $\Rightarrow$ (1):
		If $S$ is a compact leaf of a taut foliation of $M$. Possibly after passing to a finite cover of $M$
		and an elevation of $S$, we may assume without loss of generality that $S$ is orientable,
		by the naturality of spirality (Proposition \ref{bundleH}).
		Since $[S]\in H_2(M;\Integral)$ is nontrivial,
		possibly after passing to a sufficiently divisible finite cyclic cover of $M$ dual to $S$, 
		we may assume that each JSJ curve
		of $S$ lies on a distinct JSJ torus of $M$. Then Formula \ref{spiralityH} immediately 
		implies that the spirality of $\pfib(S)$ is trivial. Therefore, $S$ is aspiral in the almost fiber part.
		
		This completes the proof of Theorem \ref{main-aspiralityCriterion}.

\section{Immersed subsurfaces transverse to suspension flow}\label{Sec-flowTransverse}
	In this section, we derive a formula to calculate  the spirality for immersed subsurfaces transverse to
	suspension flows, which generalizes the formula of Rubinstein--Wang \cite{RW} in the graph manifold case.
	We apply the formula to prove Corollary \ref{separableExamples} in the end of this section.
	
	\begin{definition}\label{pseudoGraphManifold}
		An orientable closed aspherical $3$-manifold is said to be a \emph{pseudo graph manifold}, 
		if every JSJ piece is enriched with a Nielsen--Thurston supsension flow. In other words,
		each JSJ piece is homeomorphic to 
		the mapping torus of an automorphism of a compact orientable surface which is either periodic or pseudo-Anosov,
		and the flow structure of the JSJ piece is the pullback of the suspension flow.
		The \emph{degeneracy slope} of a JSJ piece of $M$ is a closed leaf on a boundary component, which is unique
		up to isotopy.
	\end{definition}
	
	Let $c$ be a transversely sided immersed loop carried by a JSJ torus $T$ of $M$, namely,
	$c$ is immersed in $T$ and assigned with a side of $T$. Denote by  $l^\pm$ the degeneracy slopes
	of $T$ induced from the $\pm$ side of $c$. We define
	\begin{equation}
		\sigma_M(c)\,=\,\frac{\gin(c,l^-)}{\gin(c,l^+)}
	\end{equation}
	where $\gin(c,l^\pm)$ denotes the (minimal) geometric intersection number between $c$ and $l^\pm$.
	
	On the other hand, let $\gamma$ be a directed path in a JSJ piece $J$ of $M$ joining two JSJ torus
	$T_{\mathtt{ini}(\gamma)}$ and $T_{\mathtt{ter}(\gamma)}$ on the boundary. Denote by $l_{\mathtt{ini}(\gamma)}$ and $l_{\mathtt{ter}(\gamma)}$
	the degeneracy slopes respectively. We define
	\begin{equation}
		\rho_M(\gamma)\,=\,\frac{\mathrm{\ell}(l_{\mathtt{ini}(\gamma)})}{\mathrm{\ell}(l_{\mathtt{ter}(\gamma)})}
	\end{equation}
	where $\ell$ denotes the length measured in the flowing time unit.
	Note that $\rho_M(\gamma)$ does not change under rescale of flow.
	
	\begin{formula}\label{formulaRW}
		Let $M$ be an orientable closed aspherical pseudo graph manifold, and $S$ be an
		orientable closed essentially immersed subsurface.
		Suppose that the almost fiber part $\pfib(S)$ is transverse to the JSJ tori and the Nielsen--Thurston flow
		on the JSJ pieces of $M$. For any immersed directed loop $\alpha$ in $\pfib(S)$ transversely intersecting
		the (possibly repeating) JSJ curves $c_1,\cdots,c_n$, denote by
		$\alpha_i$ the subpath joining $c_{i-1}$ and $c_i$ with the index understood modulo $n$,
		and consider $c_i$ to be transversely sided in the direction of $\alpha$.
		The spirality of $\alpha$ is a product:
				$$s(\alpha)\,=\,\sigma_M(\alpha)\,\rho_M(\alpha),$$
		where
				$$\sigma_M(\alpha)\,=\,\sigma_M(c_1)\times\cdots\times \sigma_M(c_n),$$
		and 
				$$\rho_M(\alpha)\,=\,\rho_M(\alpha_1)\times\cdots\times \rho_M(\alpha_n).$$
	\end{formula}
	
	For example, if $M$ is a graph manifold and $S$ is horizontally immersed, we recover
	the original formula of Rubinstein--Wang \cite{RW}. Perhaps the most similar situation
	for a general $M$ is when $S$ is an almost fiber homologically lying in a fibered cone,
	and is transverse to a canonically induced suspension flow.
	
	%The correction factor $\rho_M(\alpha)$
	
	\begin{lemma}\label{equiperiodic}
		If every hyperbolic JSJ piece of $M$
		is the mapping torus of a pseudo-Anosov automorphism of a compact orientable surface 
		which fixes all the periodic points on the boundary,
		then:
			$$\rho_M(\alpha)\,=\,1.$$
	\end{lemma}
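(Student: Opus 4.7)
The plan is to reduce the problem to showing that, within each individual JSJ piece $J$ of $M$, all degeneracy slopes on $\partial J$ have the same length when measured in the flowing time unit of the Nielsen--Thurston suspension flow on $J$. Once this reduction is established, each factor $\rho_M(\alpha_i)$ in the decomposition $\rho_M(\alpha)=\rho_M(\alpha_1)\times\cdots\times\rho_M(\alpha_n)$ equals $1$, because both the initial and terminal degeneracy slopes of $\alpha_i$ lie on boundary components of the single piece carrying $\alpha_i$. The desired identity $\rho_M(\alpha)=1$ is then immediate.

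For a Seifert fibered JSJ piece, the Nielsen--Thurston suspension flow is by construction the Seifert fibration itself, and the degeneracy slope on each boundary torus is an ordinary fiber. Any two ordinary fibers are freely homotopic within the piece and the flow is a free circle action, so they have a common length under the canonical parametrization. Hence $\rho_M=1$ on any flow-transverse arc in a Seifert piece, with no further hypothesis needed.

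For a hyperbolic JSJ piece, realized as the mapping torus $J=M_\theta=F\times[0,1]/(x,0)\sim(\theta(x),1)$ of a pseudo-Anosov automorphism $\theta$, the standing hypothesis says that every periodic point of $\theta|_{\partial F}$ is already a fixed point. I would use this to produce, on each boundary component $\partial_i F$, a fixed point $p_i$, so that $\{p_i\}\times[0,1]/\sim$ is a closed leaf of the suspension flow on the boundary torus $\partial_i F\times[0,1]/\sim$, and hence a representative of the degeneracy slope. Because this closed leaf traverses the mapping torus exactly once, its length in the canonical flow time equals $1$, independently of which boundary component it sits in. Since all degeneracy slopes on a single boundary torus are isotopic and therefore share a common flow length, every degeneracy slope of this piece has flow length $1$, so $\rho_M=1$ on any flow-transverse arc inside this piece.

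The one conceptual point to highlight is why the hypothesis about fixing periodic points is what is needed: in general the flow length of a degeneracy slope on $\partial_i F\times[0,1]/\sim$ equals the minimal period of a periodic point of $\theta|_{\partial_i F}$, and these periods can differ between boundary components of the same piece --- precisely the phenomenon recorded by nontrivial fractional Dehn twist coefficients. The assumption forces every such period to be $1$, so the flow lengths of degeneracy slopes agree across all boundary components of every hyperbolic piece. Combining this with the automatic Seifert case yields $\rho_M(\alpha_i)=1$ for every $i$, and multiplying gives $\rho_M(\alpha)=1$. I do not expect any serious technical obstacle; the argument is essentially a normalization remark powered by the hypothesis.
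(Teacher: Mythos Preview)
Your proof is correct and follows essentially the same approach as the paper: both arguments reduce to showing that within each JSJ piece the degeneracy slopes on all boundary components share a common flow length (ordinary fibers in the Seifert case, suspensions of fixed points of length $1$ in the hyperbolic case under the hypothesis), so each factor $\rho_M(\alpha_i)=1$. Your version supplies more detail than the paper's terse proof; the only quibble is the aside attributing differing boundary periods to ``nontrivial fractional Dehn twist coefficients,'' which conflates two distinct phenomena, but this does not affect the argument.
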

	
	\begin{proof}
		In this case, the length of degeneracy slopes on the boundary of a hyperbolic JSJ piece
		depends only on the piece. On the other hand, the degeneracy slopes 
		on the boundary of a Seifert fibered piece is necessarily an ordinary fiber,
		so their length depends only on the piece as well.
		It follows that each $\rho_M(\alpha_i)$ equals $1$ so $\rho_M(\alpha)$ equals $1$.		
	\end{proof}
	
	\begin{remark}
		In general, the correction factor $\rho_M$ can be nontrivial 
		since orbit length of periodic points on the boundary 
		for pseudo-Anosov maps may differ as the component varies.
		For explicit examples, see the pseudo-Anosov braids $\beta_{m,n}$ in \cite{HK}.
	\end{remark}

		In order to derive the Formula \ref{formulaRW} from Formula \ref{spiralityH}, 
		we take a setup triple $(S'_v,\phi'_v,\{c'_\delta\}_{\delta|v})$ as follows.
		For each JSJ subsurface $S_v$ of $\pfib(S)$, take
		$S'_v$ to be $S_v$, and hence $c'_\delta$ to be $c_\delta$.
		Denote by $J_v$ the JSJ piece of $M$ carrying $S_v$, and
		take $J'_v$ to be a finite cover of $J_v$ with a fiber $S'_v$
		and a boundary-fixing monodromy $\phi'_v$.
		Note that $\phi'_v$ can be freely isotopic to the Nielsen--Thurston normal form
		which is either periodic or pseudo-Anosov, 
		unique up to conjugacy.
		Possibly after passing to a power of $\phi'_v$,
		we require all the periodic points 
		of the Nielsen--Thurston normal form of $\phi'_v$ 
		on $\partial S'_v$ to be fixed by $\phi'_v$.
				
		The assumption that $\pfib(S)$ is flow-transversely immersed
		guarantees that the suspension flow of 
		the Nielsen--Thurston normal form of $\phi'_v$
		on $J'_v$ is conjugate to the pullback of
		the flow of $J_v$. In fact, 
		if $S_v$ is carried by a Seifert-fibered piece, $S_v$ is horizontally immersed,
		and the flows are both conjugate to the Seifert fibration of $J'_v$.
		If $J_v$ is atoroidal, the result is due to Fried \cite[Theorem 7]{Fried}, cf.~McMullen \cite[Theorem 3.1]{McMullen}.
		In particular, any degeneracy slope $l'_\delta$ of $J'_v$ covers
		the degeneracy slope $l_\delta$ of $J_v$ up to homotopy.
		%Furthermore, the assumption that the Nielsen--Thurston normal form
		%of $\phi'_v$ fixes all the periodic points on the boundary
		%implies that the length $\ell(l'_\delta)$ depends only on the carrier vertex $v$,
		%so we write it as $\ell'_v$.
		%Lemma \ref{equiperiodic} implies that all the degeneracy slopes $l_\delta$ of $J_v$
		%have equal length measured in the flowing time unit, and the same holds for the degeneracy slopes
		%$l'_\delta$ of $J'_v$. 
		Therefore, the covering degrees between degeneracy slopes can be computed by:
			$$[l'_\delta:l_\delta]\,=\,\ell(l'_\delta)\,/\,\ell(l_\delta).$$
				
		For any boundary component $c_\delta$ of $S_v$ carried by a boundary component $T_\delta$ of $J_v$,
		$c'_\delta$ is a lift. We have
			$$[c'_\delta:c_\delta]\,=\,1,$$
		and
			$$[T'_\delta:T_\delta]\,=\,[l'_\delta:l_\delta]\cdot[c'_\delta:c_\delta]\cdot\frac{\gin(c_\delta,l_\delta)}{\gin(c'_\delta,l'_\delta)}
			\,=\,\frac{\gin(c_\delta,l_\delta)\,/\,\ell(l_\delta)}{\gin(c'_\delta,l'_\delta)\,/\,\ell(l'_\delta)}.$$			
		Observe that
		$\gin(c'_\delta,l'_\delta)$ equals $1$ on $T'_\delta$ and $\ell(l'_\delta)$
		depends only on the carrying vertex $v$, by our assumption
		that the Nielsen--Thurston normal form of $\phi'_v$ fixes all periodic points 
		on the boundary. Thus the factors on the denominator are cyclically cancelled 
		when plugged into Formula \ref{spiralityH}.
		Now Formula \ref{formulaRW} can be deduced Formula \ref{spiralityH}
		by simple substitution.

	\begin{proof}[{Proof of Corollary \ref{separableExamples}}]
		Let $\theta:F\to F$ be an orientation-preserving homeomorphism of
		an oriented closed surface $F$ of negative Euler characteristic.
		Let $S$ be an oriented closed essentially immersed subsurface of the mapping torus $M_\theta$ 
		transverse to the JSJ tori and to the Nielsen--Thurston suspension flow supported in the JSJ pieces.

		Suppose that the monodromy $\theta$ has no nontrivial fractional Dehn twist coefficient. 
		It suffices to show that some elevation of $S$ in a finite cover of $M_\theta$
		is aspiral in the almost fiber part, by Theorem \ref{main-aspiralityCriterion}.
		By passing to a power of $\theta$, we may assume that $\theta$ fixes all the
		periodic points on the boundary of the pseudo-Anosov part.
		For any loop $\alpha$ in the almost fiber part of $S$,
		we have $\rho_{M_\theta}(\alpha)$ equals $1$ by Lemma \ref{equiperiodic}.
		Because $\theta$ has no nontrivial fractional Dehn twist coefficient,
		for all JSJ tori of $M_\theta$, the degeneracy slopes induced from both sides coincide with each other.
		Hence $\sigma_{M_\theta}(\alpha)$ equals $1$.
		By Formula \ref{formulaRW} we conclude that $S$ is aspiral in the almost fiber part.
	\end{proof}

\section{Application}\label{Sec-application}
	In this section, we exhibit new examples of subsurfaces
	essentially immersed but not virtually embedded,
	as asserted by Corollary \ref{nonSeparableExamples}.
	We	construct the subsurface by applying techniques 
	developed in \cite{PW-graph,PW-mixed,DLW},
	and detect a loop with nontrivial spirality 
	using Formula \ref{formulaRW}.
	
	\begin{example}
		Let $M_\theta$ be a closed orientable surface bundle fibering over the circle
		with a fiber $F$ and a monodromy $\theta:F\to F$. Suppose that $\theta$ has a nontrivial
		fractional Dehn twist coefficient along a reduction curve $e$. We construct
		a closed essentially immersed subsurface $S$ of $M_\theta$ 
		which is not virtually embedded.
	\end{example}
	
	The idea is first to construct locally an essentially immersed almost fiber subsurface $E$, 
		within the JSJ pieces adjacent to the carrier JSJ torus $e$.
		We carefully make sure that $E$ has nontrivial spirality using the fact that
		the fractional Dehn twist coefficient of $\theta$ along $e$ is nontrivial. 
		Take two oppositely oriented copies
		$E$ and $\bar{E}$. Then, we can merge them along boundary
		with other virtually embedded subsurfaces to obtain a closed subsurface of the $3$-manifold. 
		Below we give an outline of a construction.
	
	\begin{proof}[Construction]
		Rewrite $M_\theta$ as $M$ for simplicity, and denote by $T$ the JSJ torus carrying
		$e$.
		Possibly after passing to a finite cover of $M$ and an elevation of $F$, 
		we may assume that the $\theta$ preserves all the reduction curves 
		componentwise and the periodic or pseudo-Anosov
		normal forms in the rest regions fix all periodic points on the boundary.
		We may also assume that the JSJ torus $T$ is adjacent two distinct JSJ pieces $J^\pm$.
		Denote by $l^\pm$ the degeneracy slopes
		on $T$ induced from $J^\pm$, directed in the flow direction,
		and by $F^\pm$ the fiber $F\cap J^\pm$.
		We have
			$$\gin(e,l^\pm)\,=\,1,$$ 
		and in $H_1(T;\Integral)$, choosing a suitable direction of $e$,
			$$[l^+]-[l^-]\,=\,k\cdot[e],$$
		where $k$ is the fractional Dehn twist coefficient of $\theta$, 
		a nonzero integer under our assumption of simplification.
		
		Take a regular JSJ characteristic finite cover of $J^\pm$, 
		denoted by $\tilde{J}^\pm$,
		such that $\tilde{J}^\pm$ has at least three boundary components
		covering $T$,
		(cf.~\cite{CLR-bounded} and Subsection \ref{Subsec-semicoverAndMerging}).		
		Then there exists two boundary components $\tilde{T}^\pm_0$ and $\tilde{T}^\pm_1$
		of $\tilde{J}^\pm$ covering $T$
		such that $H_1(T^\pm_0;\Rational)\oplus H_1(T^\pm_1;\Rational)$ 
		has at least $3$ dimensions surviving in $H_1(\tilde{J}^\pm;\Rational)$.
		Without loss of generality, we may assume $H_1(T_1^\pm;\Rational)$
		to be embedded in $H_1(\tilde{J}^\pm;\Rational)$.
		For $i$ equal to $0$ or $1$, denote by 
			$$\partial_i:H_2(\tilde{J}^\pm,\partial\tilde{J}^\pm;\Integral)\to H_1(\tilde{T}^\pm_i;\Integral)$$
		the boundary homomorphism followed by a projection to 
		$H_1(\tilde{T}^\pm_i;\Integral)$.
		Choose a generic pair of positive integers $r^\pm$ subject to
			$$r^+-r^-=-k,$$
		then there exists $\beta^\pm$ in $H_2(\tilde{J}^\pm,\partial\tilde{J}^\pm;\Integral)$ such that
			$$\partial_0\beta^\pm\,=\,0,\textrm{ and }\partial_1\beta^\pm\,=\,p\cdot([\tilde{l}^\pm]\,+\,r^\pm\cdot[\tilde{e}]),$$
		for some positive integer $p$,
			%$$\partial_1\beta^\pm\,=\,p\cdot([\tilde{l}^\pm]\,+\,r^\pm\cdot[\tilde{e}]).$$
		where $\tilde{l}^\pm$ and $\tilde{e}$ stands for  corresponding elevated curves in $\tilde{T}_1^\pm$.
		In fact, $\beta^\pm$ can be constructed dually 
		by taking an (integral) homomorphism $H_1(\tilde{J}^\pm;\Rational)\to \Rational$
		whose kernel contains the image of $H_1(\tilde{T}_0^\pm;\Rational)$ and intersects $H_1(\tilde{T}_1^\pm;\Rational)$
		in the subspace spanned by the vector $p\cdot([\tilde{l}^\pm]\,+\,r^\pm\cdot[\tilde{e}])$.
		Denote by $\tilde{F}^\pm$ an elevation of $F^\pm$ in $\tilde{J}^\pm$.  
		Then for a positive integer $q$, there is an oriented taut subsurface
		$\tilde{E}^\pm$ of $\tilde{J}^\pm$, whose relative homology class satisfies
			$$[\tilde{E}^\pm]\,=\,\beta^\pm\,+\,q\cdot[\tilde{F}^\pm].$$
		Moreover, choosing $q$ to be sufficiently large, 
		$[\tilde{E}^\pm]$ lies in the fibered cone of the Thurston norm containing $[\tilde{F}^\pm]$.
		This guarantees that $\tilde{E}^\pm$ can be arranged
		to be transverse to the Nielsen--Thurston suspension flow
		in $J^\pm$, in the periodic case since $\tilde{E}^\pm$ is horizontal,
		or in the pseudo-Anosov case by Fried \cite{Fried}. 
		Let $E^\pm$ be the properly essentially immersed subsurface in $J^\pm$ projected from $\tilde{E}^\pm$.
		It follows that $E^\pm$ is also transverse to the Nielsen--Thurston suspension flow
		in $J^\pm$.
		Moreover, there are two components $\partial E^\pm$,
		namely, $c^\pm_0$ and $c^\pm_1$, which are immersed in $T^\pm$ projected from
		components of $\partial_0\tilde{E}^\pm$ and $\partial_1\tilde{E}^\pm$, respectively.
		Note that $c^+_0$ and $c^-_0$ represent the same immersed curve in $T$ up to homotopy,
		which we denote as $c_0$, and similarly denote $c_1$ for the immersed $c^+_1$ and $c^-_1$.
		
		Let $V$ be the union of $J^+$ and $J^-$, glued along $T$. Glue up 
		$E^+$ and $E^-$ along $c_0$ and $c_1$.
		The result is a partially immersed subsurface 
		$E$ of $V$, in the sense that all boundary components of $E$ lie on a JSJ torus 
		in the interior or on the boundary of $V$.
		Take two oppositely oriented copies $E$ and $\bar{E}$ of the partially immersed
		subsurface $E$.
		The components of $\partial E$ and $\partial\bar{E}$ are
		oppositely directed pairs of curves $\{b_i,\bar{b}_i\}$. For each pair $\{b_i,\bar{b}_i\}$,
		construct an essentially immersed, virtually embedded, oriented compact subsurface $R_i$,
		bounded by an equidegree finite cover of $b_i\sqcup\bar{b}_i$, 
		locally from the side opposite to $E\sqcup\bar{E}$.
		This can be done by taking $R_i$ to be a partial PW subsurface 
		as discussed in \cite[Section 4]{DLW}.
		Merge $R_i$ and $E\sqcup\bar{E}$ along boundary.
		In other words, pass to some finite cover
		of each subsurface	and glue them up along the matched boundary components,
		(cf.~\cite[Subsection 3.3]{PW-mixed}).
		As a result, we obtain an essentially immersion of a closed oriented
		surface 
			$$j:S\looparrowright M,$$
		together with an essential subsurface $E'$ of $S$ having the following property:
		The restriction of  $j$ to $E'$ factorizes
		as a covering followed by the partial immersion:
			$$E'\to E\to V.$$
		
		Observe that $\pfib(S)$ is transverse to the Nielsen--Thruston suspension flow. Indeed,
		over $E\sqcup\bar{E}$ this follows from our construction; 
		over $R_i$ this is because
		a partial PW subsurface by definition does not contain geometrically infinite JSJ
		subsurfaces carried by hyperbolic pieces.
				
		To see that $S$ is not aspiral, we take a directed loop $\alpha$ in $E$ which crosses
		$c_0$ and $c_1$ once each, assuming that $\alpha$ crosses $c_1$ 
		from the $-$ side to the $+$ side.
		Let $\alpha'$ be an elevation of $\alpha$ in $E'$.
		Hence $\alpha'$ is a loop in the almost fiber part
		$\pfib(S)$. From the construction, $\sigma_M(c_0)=1$ and $\sigma_M(c_1)=(pr^-+q)/(pr^++q)$.
		By Lemma \ref{equiperiodic} and our assumption of $\theta$,
		the correction factor $\rho_M(\alpha')=1$.
		Then by Formula \ref{formulaRW}:
				$$s(\alpha')\,=\,\left(\frac{pr^-+q}{pr^++q}\right)^{[\alpha':\alpha]}\,\neq\,1.$$
		We conclude that $\pfib(S)$ is not aspiral,
		so $S$ is not virtually embedded by Theorem \ref{main-aspiralityCriterion}.
	\end{proof}

\section{Decision problems}\label{Sec-decisionProblems}
	
		As the last part of our discussion, we consider the decision problem for surface subgroup separability.
		We refer the readers to a recent survey \cite{AFW-decision} of Aschenbrenner--Friedl--Wilton
		for background about decision problems in $3$-manifold groups.
		
		\begin{proposition}\label{groupDecision}
			The surface subgroups of a finitely presented closed $3$-manifold group
			are recursively enumerable, 
			by exhibiting a finitely presented generating set for each of them.
			Moreover, the separability of these subgroups can be decided. 
		\end{proposition}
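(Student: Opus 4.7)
The plan is to combine Theorem \ref{main-aspiralityCriterion} with algorithmic 3-manifold topology made available by geometrization. Before anything else, I would run an effective geometrization procedure on the given finite presentation of $\pi_1(M)$ to produce a triangulated model of $M$ together with its JSJ decomposition, so that each atoroidal piece is equipped with an explicit hyperbolic structure and each Seifert fibered piece with its canonical fibration.

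For the enumeration part, I would recursively list all triples $(\Sigma, T, f)$ where $\Sigma$ is a closed surface, $T$ a triangulation of $\Sigma$, and $f\colon \Sigma \to M$ a simplicial map into some iterated subdivision of $M$. For each such $f$ one must decide whether the induced homomorphism $f_\ast\colon \pi_1(\Sigma) \to \pi_1(M)$ is injective; this can be done by straightening $f$ with respect to the JSJ decomposition of $M$ and checking $\pi_1$-injectivity of each resulting JSJ subsurface in its carrier, a task that is decidable in hyperbolic pieces (via the solved word problem and relative quasiconvexity algorithms of Wise) and in Seifert fibered pieces (by combinatorial arguments using the fibration). A finite presentation of the image subgroup is then read off from a spine of $\Sigma$, and the inclusion in $\pi_1(M)$ is expressed through $f_\ast$.

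To decide separability of a surface subgroup $H \le \pi_1(M)$ presented in this way, I would apply Theorem \ref{main-aspiralityCriterion} to reduce the question to aspirality of the almost fiber part $\pfib(S)$ of a representing subsurface $S$. First straighten $S$ to exhibit its induced JSJ decomposition, then classify each JSJ subsurface $S_v$: in a Seifert fibered piece one tests horizontal versus vertical position by intersection with ordinary fibers, while in a hyperbolic piece one must decide whether $S_v$ is geometrically infinite, which is algorithmic via computation of the convex core of the associated cover or, equivalently, by checking whether the class of $S_v$ lies on a fibered cone of the Thurston norm of a sufficient finite cover. Once $\pfib(S)$ is identified and each of its JSJ subsurfaces is realized as a genuine fiber in an effectively computable finite cover of its carrier, yielding setup data $(S'_v, \phi'_v, \{c'_\delta\}_{\delta|v})$, Formula \ref{spiralityH} expresses $s(\mathscr{H}) \in H^1(\pfib(S); \Rational^\times)$ as an explicit product of covering-degree ratios on each cycle of the dual graph, and aspirality reduces to a finite combinatorial check on generators of $H_1(\pfib(S); \Integral)$.

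The principal obstacle is the algorithmic recognition of the almost fiber part, in particular the effective detection of geometrically infinite JSJ subsurfaces in hyperbolic pieces and the production of a virtual fibration realizing each such subsurface as a fiber of a finite cover. This relies on an effective version of Thurston's theory of the norm and its fibered faces, together with algorithmic hyperbolic geometry to certify the relevant cover constructions. Once these ingredients are in place, the remaining steps are pure bookkeeping of finite covering indices $[T'_\delta : T_\delta]$ and $[c'_\delta : c_\delta]$, and evaluation of the resulting multiplicative $1$-cocycle on a finite generating set of cycles.
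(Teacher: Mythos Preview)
Your outline has the right global shape---enumerate simplicial maps of surfaces, test essential immersion, identify the almost fiber part, then evaluate the spirality cocycle via Formula~\ref{spiralityH}---and you correctly locate the crux in the recognition of $\pfib(S)$ and the production of setup data for each virtual-fiber JSJ subsurface. But the method you propose for that step is precisely where the argument is incomplete. You appeal to convex-core computations, effective Thurston-norm theory, and ``production of a virtual fibration realizing each such subsurface as a fiber of a finite cover''; you then concede this is the principal obstacle and assume the ingredients are available. They are not obviously so: in particular, ``checking whether the class of $S_v$ lies on a fibered cone of the Thurston norm of a sufficient finite cover'' is an unbounded search unless you can bound the cover in advance, and an effective, certificate-producing version of virtual fibering for a given geometrically infinite subsurface is not something you can simply cite.

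The paper avoids all of this with a much more elementary device (Lemma~\ref{PLDecision}). For each JSJ subsurface $S_v$ in its carrier piece $J$, run three enumerations in parallel: (i) over nontrivial elements of $\pi_1(S_v)$, testing each for triviality in $\pi_1(J)$; (ii) over elements of $\pi_1(J)$, testing whether one normalizes $\pi_1(S_v)$ without lying in it; (iii) over homomorphisms from finite-index subgroups of $\pi_1(J)$ to $\pi_1(S_v)$, testing whether one is a retraction onto the image. By the structure theory already in hand, an essentially immersed $S_v$ is either a virtual fiber or a virtual retract of $J$, so exactly one of (i)--(iii) must halt; the halting witness certifies, respectively, non-injectivity, virtual-fiber status, or virtual-retract status. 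In case (ii) the witnessing normalizer is exactly the monodromy datum needed to build the setup triple $(S'_v,\phi'_v,\{c'_\delta\})$ and feed Formula~\ref{spiralityH}. The only black boxes are the word and membership problems in $\pi_1(J)$, which are standard; no explicit hyperbolic geometry, no effective fibered-face theory, and no search over finite covers are required. This parallel-search trick is the idea you are missing.
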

		
		As the Membership Problem is solvable for finitely presented $3$-manifold groups
		\cite[Theorem 4.11]{AFW-decision},
		Proposition \ref{groupDecision} implies a solution to
		the decision problem for the separability of a (presumed) surface subgroup
		in a finitely presented $3$-manifold group, as asserted by 
		Corollary \ref{decideSeparability}. 
		
		We first solve a topological decision problem.
		
		\begin{lemma}\label{PLDecision}
			If $S$ is a piecewise linearly immersed triangulated closed subsurface of a closed triangulated $3$-manifold $M$,
			then it is decidable whether or not $S$ is essentially immersed, and if yes, whether or not $S$ is virtually embedded.
		\end{lemma}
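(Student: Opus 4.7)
The plan is to reduce both decisions to data extracted from the JSJ decomposition of $M$, and then to appeal to Theorem \ref{main-aspiralityCriterion} for the virtual embedding half. First, I compute the JSJ decomposition of $M$ algorithmically via normal surface theory, producing the JSJ tori and the identification of each JSJ piece as Seifert fibered or hyperbolic. Then I algorithmically homotope the PL immersion $j\colon S\looparrowright M$ to transversely intersect the JSJ tori in a minimal collection, via standard PL simplification of normal curves. This realizes the induced JSJ decomposition of $S$ into JSJ curves and JSJ subsurfaces $\{S_v\}$.

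For the essentialness question, first rule out $S\cong S^{2}$ by computing $\chi(S)$. Otherwise, $S$ is essentially immersed iff each JSJ curve is essential in both $S$ and the carrying JSJ torus (a decidable $2$-dimensional question), and each JSJ subsurface $S_v$ is $\pi_1$-injective in its carrier JSJ piece $J_v$. The latter I decide by running two parallel searches on each $S_v$: one enumerates words in $\pi_1(S_v)$ and tests them for triviality in $\pi_1(J_v)$ using the solvable word problem in a Seifert fibered or hyperbolic $3$-manifold group, so as to witness non-injectivity; the other enumerates finite covers of $J_v$ and tests whether some elevation of $S_v$ embeds as an essential subsurface (decidable by normal surface theory), so as to witness injectivity. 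By the preliminary discussion in Section \ref{Sec-prelim}, exactly one of these searches halts on each $S_v$. For the virtual embedding question, assuming essentialness, I apply Theorem \ref{main-aspiralityCriterion}: $S$ is virtually embedded iff the spirality character $s(\mathscr{H})\in H^{1}(\pfib(S);\Rational^{\times})$ is torsion. To compute $s(\mathscr{H})$ via Formula \ref{spiralityH}, I first identify $\pfib(S)$ by deciding, for each JSJ subsurface, whether it is a virtual fiber of its carrier piece (horizontal versus vertical in the Seifert case, geometrically infinite versus finite in the hyperbolic case). I then continue enumerating finite covers of each $J_v$ until a setup triple $(S'_v,\phi'_v,\{c'_\delta\}_{\delta|v})$ is exhibited, extract the integers $h_\delta$, and evaluate $\langle s(\mathscr{H}),[\alpha]\rangle$ on a basis of $H_1(\pfib(S);\Integral)$. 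I declare $S$ virtually embedded precisely when every value lies in $\{\pm 1\}$.

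The principal obstacle is algorithmic recognition of the almost fiber part and the discovery of the setup covers, especially for JSJ subsurfaces in hyperbolic pieces where one must distinguish geometrically finite from geometrically infinite. No direct polynomial algorithm is invoked, but decidability is again guaranteed by running exhaustive parallel searches for each $S_v$, in the spirit of the essentialness step: one search looks for a finite cover of $J_v$ in which $S_v$ lifts to an embedded fiber (certifying virtual fiberedness), the other for one in which $S_v$ lifts to an embedded geometrically finite subsurface (certifying geometric finiteness). The preliminary discussion in Section \ref{Sec-prelim} guarantees that exactly one halts. Once the almost fiber part and the covering degrees $h_\delta$ are in hand, the remaining spirality computation is purely arithmetic, so the overall algorithm terminates.
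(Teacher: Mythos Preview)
Your overall architecture matches the paper's: reduce to the JSJ pieces, certify essentialness piecewise, recognize the almost fiber part, build setup data, and evaluate the spirality character via Formula~\ref{spiralityH} and Theorem~\ref{main-aspiralityCriterion}. The difference lies in the certificates. The paper runs, for each $S_v$ in its carrier piece $J$, three \emph{group-theoretic} searches in parallel: (i) a nontrivial element of $\pi_1(S_v)$ dying in $\pi_1(J)$; (ii) an element of $\pi_1(J)$ normalizing but not lying in $\pi_1(S_v)$; (iii) a retraction from a finite-index subgroup of $\pi_1(J)$ onto $\pi_1(S_v)$. Process (ii) directly hands you the monodromy $\phi'_v$ needed in the setup triple, and process (iii) directly hands you the virtual retract. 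Your version replaces (ii) and (iii) by enumerating finite covers of $J_v$ and testing, via normal-surface and product-recognition algorithms, whether an elevation of $S_v$ is an embedded fiber or an embedded non-fiber essential subsurface. This is a legitimate alternative and is arguably more geometric; it costs you an extra step to read off $\phi'_v$ from the bundle structure of the fibered cover you find, but that is routine.

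There is one genuine omission. You jump straight to the JSJ decomposition and to Theorem~\ref{main-aspiralityCriterion}, both of which require $M$ orientable and aspherical; the lemma is stated for an arbitrary closed triangulated $3$-manifold. The paper first passes to the orientable double cover if necessary, then finds a Kneser--Milnor system of spheres, simplifies $S$ across them by disk surgeries, and either detects non-essentialness (if the surgery changes the topological type of $S$ beyond adding sphere components) or reduces to the irreducible prime factor carrying $S$. Without this preliminary reduction your algorithm is not well-defined on the stated input. This is easy to repair, but you should say it.
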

		
		\begin{proof}
			We may first check if $M$ is orientable by computing its orientation character, 
			and if $M$ is not orientable, we produce a double cover of $M$ and choose an elevation of $S$.
			Without loss of generality, we may assume that $M$ is orientable and $S$ is not a sphere.
			
			Find a collection of essential spheres of $M$ which yields a Kneser--Milnor decomposition 
			of $M$. Simplify the intersection of $S$ with the essential spheres by disk surgeries
			to produce an immersed closed subsurface $S'$ in the complement.
			If $S'$ is not homeomorphic to $S$ together with possibly some sphere components,
			then $S$ is not essentially immersed. 
			Otherwise, replace $M$ with the closed Kneser--Milnor factor containing 
			the non-sphere component of $S'$. 
			Thus we may assume that $M$ is irreducible.
			
			Proceed to find a collection of of essential tori of $M$ which gives rise to a JSJ decomposition of $M$.
			If $S$ intersects a JSJ torus in a contractible loop which is homotopically nontrivial on $S$,
			then $S$ is not essentially immersed. 
			Otherwise, simplify the position of 
			$S$ by disk surgeries and discard sphere components. Further simplify the position of $S$ 
			by annulus surgeries to obtain the induced JSJ decomposition of $S$.
			
			Note that $S$ is essentially immersed if and only if the JSJ subsurfaces $S_v$ are essentially immersed
			in the carrier JSJ pieces $J$ of $M$. 
			For each $S_v$, simutaneously run the following three processes:
			\begin{itemize}
				\item Recursively enumerate all the nontrivial elements of $\pi_1(S_v)$;
				at each step, decide whether the element is trivial in $\pi_1(J)$; return if yes. 
				\item Recursively enumerate all the elements of $\pi_1(J)$; at each step,
				decide whether the element is a normalizer of $\pi_1(S_v)$ 
				and not contained in $\pi_1(S_v)$; return if yes.
				\item Recursively enumerate all the homomorphisms
				from any finite index subgroups of $\pi_1(J)$ to $\pi_1(S_v)$;
				at each step, decide whether the homomorphism is a retraction
				to the image of $\pi_1(S_v)$; return if yes.
			\end{itemize}
			One of the three processes must halt, so we have certified either that $S_v$ is not essentially 
			properly immersed, or that $S_v$ is a virtual fiber, or that $S_v$ is a virtual retract.
			
			In the first case, $S$ is not virtually immersed. In the second or the third case,
			$S$ is virtually immersed, and we have recognized the almost fiber part $\pfib(S)$.
			Moreover, we have also obtained a collection of the setup
			data which defines the associated $\Rational^\times$-principal bundle $\mathscr{H}$
			(Subsection \ref{Subsec-bundleH}) by the second process. 
			Then we can compute the spirality character of the almost fiber part $\pfib(S)$
			using Formula \ref{spiralityH},
			and decide whether or not it always takes values in $\{\pm1\}$.
			Respectively, we can conclude whether or not $S$ is virtually embedded.
		\end{proof}
		
		%\begin{lemma}
			%Let $\pi$ be a finitely presented subgroup of a closed $3$-manifold group
			%$\pi_1(M)$,	and $H$ be a closed surface subgroup of $\pi$. Suppose that $\pi$ is a retract
			%of $\pi_1(M)$. Then $H$ is separable in $\pi$  
			%if and only if $H$ is separable in $\pi_1(M)$.
		%\end{lemma}
		%
		%\begin{proof}
			%It suffices to prove the `only if' direction. Suppose that $H$ is separable in $\pi$.
			%By Scott \cite{Sc-core}, the covering space $\tilde{M}$ of the closed 3-manifold
			%$M$ is homotopy equivalent to a compact $3$-submanfold $K$, and $H$ can be realized
			%by an essentially immersed closed subsurface $S$ of $K$. By Przytycki--Wise \cite{PW-embedded},
			%$S$ is virtually embedded in $K$.
			%
		%\end{proof}
		%
		\begin{proof}[{Proof of Proposition \ref{groupDecision}}]
			Let $\mathcal{P}$ be a finite presentation of a closed $3$-manifold group $\pi$.
			There is an algorithm to produce a pointed closed triangulated $3$-manifold $(M,x_0)$
			with an isomorphism $\pi_1(M,x_0)\cong\pi$, \cite[Lemma 3.3]{AFW-decision}. 
			
			Consider any pair $(S,h)$ where $S$ is a triangulated closed surface with a base point $*$,
			and $h:\pi_1(S,*)\to\pi_1(M,x_0)$ is a homomorphism. Possibly after barycentric subdivision,
			we can realize $h$ by a pointed piecewise linear 
			map $S\to M$. By Lemma \ref{PLDecision}, we can decide whether $S$ is
			essentially immersed, and if yes, whether it is virtually embedded.
			In other words, we can decide whether $h$ is injective, and if yes,
			whether the image of $h$ is separable by Przytycki--Wise \cite{PW-embedded}.
			When $h$ is injective, we return with 
			a finitely presented generating set of the image of $h$,
			which is induced by the finite presentation of $\pi_1(S,*)$.
			Otherwise, we return with no output.
			
			Recursively enumerate all the pairs $(S,h)$, then the above process
			outputs a complete list of all the surface subgroups of $\pi$,
			each given by a finitely presented generating subset, 
			and decorated with a mark of separability.
		\end{proof}
		
	\section{Conclusions}\label{Sec-conclusions}
		In conclusion, essentially embedded closed subsurfaces in closed orientable aspherical $3$-manifolds
		can be virtually fit into a globally interesting position,
		and the obstruction 
		for essentially immersed subsurfaces
		to having such virtual arrangement
		is the spirality character of the almost fiber part. 
		When a subsurface is locally nicely immersed with respect to a fibered cone,
		the spirality character can be determined more effectively.
		
		We propose some further questions regarding suspension flow and essentially
		immersed subsurfaces. Let $S$ be a closed essentially immersed subsurface of a closed
		orientable nonpositively curved $3$-manifold $M$ ---
		the assumption on $M$ is to heuristically ensure
		many flexible virtual fibrations \cite{Agol-RFRS,Agol-VHC,Liu,PW-mixed}.
		
		\begin{question}
			If $S$ has no JSJ subsurface 
			which is a vertical annulus carried by a Seifert fibered piece,
			does $S$ virtually satisfy 
			the assumption of Formula \ref{formulaRW}?			
		\end{question}
						
		\begin{question}
			Under the assumption of Formula \ref{formulaRW},
			do we have an effective characterization of the almost fiber part $\pfib(S)$
			analogous to the finite foliation criterion of
			Cooper--Long--Reid \cite{CLR-finiteFoliation,CLR-SIET}?			
		\end{question}
				
		\begin{question}
			How to construct a virtual fiber of $M$ with control on the degeneracy slopes?
		\end{question}

\bibliographystyle{amsalpha}

%\bibliography{../refs}

\end{document}